\newcommand{\+}{\protect\nobreakdash-}
\renewcommand{\:}{\colon}
\newcommand{\rarrow}{\longrightarrow}
\newcommand{\ot}{\otimes}
\newcommand{\bu}{{\text{\smaller\smaller$\scriptstyle\bullet$}}}
\DeclareMathOperator{\Hom}{Hom}
\DeclareMathOperator{\im}{im}
\newcommand{\Modl}{{\operatorname{\mathsf{--Mod}}}}
\newcommand{\Comodl}{{\operatorname{\mathsf{--Comod}}}}
\newcommand{\Modr}{{\operatorname{\mathsf{Mod--}}}}
\newcommand{\Modrfl}{{\operatorname{\mathsf{Mod_{fl}--}}}}
\newcommand{\Bimod}{{\operatorname{\mathsf{--Bimod--}}}}
\newcommand{\Bimodrfl}{{\operatorname{\mathsf{--Bimod_{fl}--}}}}
\newcommand{\Bimodlrfl}{{\operatorname{\mathsf{--{}_{fl}Bimod_{fl}--}}}}
\newcommand{\Comon}{{\operatorname{\mathsf{--Comon}}}}
\newcommand{\Coalg}{{\operatorname{\mathsf{--Coalg}}}}
\newcommand{\Cocom}{{\operatorname{\mathsf{--Cocom}}}}
\newcommand{\Corings}{{\operatorname{\mathsf{--Corings}}}}
\newcommand{\Sets}{\mathsf{Sets}}
\newcommand{\fl}{\mathsf{fl}}
\newcommand{\bifl}{\mathsf{bifl}}
\newcommand{\obifl}{{\overline{\bifl}}}
\newcommand{\rfl}{\mathsf{rfl}}
\newcommand{\lrfl}{\mathsf{lrfl}}
\newcommand{\sur}{\mathsf{sur}}
\newcommand{\epi}{\mathsf{epi}}
\newcommand{\rop}{\mathrm{op}}
\newcommand{\id}{\mathrm{id}}
\newcommand{\sC}{\mathsf C}
\newcommand{\sD}{\mathsf D}
\newcommand{\sE}{\mathsf E}
\newcommand{\sK}{\mathsf K}
\newcommand{\sL}{\mathsf L}
\newcommand{\sM}{\mathsf M}
\newcommand{\sS}{\mathsf S}
\newcommand{\boZ}{\mathbb Z}
\newcommand{\Section}[1]{\bigskip\section{#1}\medskip}
\theoremstyle{plain}
\newtheorem{thm}{Theorem}[section]
\newtheorem{lem}[thm]{Lemma}
\newtheorem{prop}[thm]{Proposition}
\theoremstyle{definition}
\newtheorem{rem}[thm]{Remark}
\begin{document}

\title{The categories of corings and coalgebras \\ over a ring are
locally countably presentable}

\author{Leonid Positselski}

\address{Institute of Mathematics, Czech Academy of Sciences \\
\v Zitn\'a~25, 115~67 Praha~1 \\ Czech Republic}

\email{positselski@math.cas.cz}

\begin{abstract}
 For any commutative ring $R$, we show that the categories of
$R$\+coal\-ge\-bras and cocommutative $R$\+coalgebras are locally
$\aleph_1$\+presentable, while the categories of $R$\+flat
$R$\+coalgebras are $\aleph_1$\+accessible.
 Similarly, for any associative ring $R$, the category of $R$\+corings
is locally $\aleph_1$\+presentable, while the category of
$R$\+$R$\+bimodule flat $R$\+corings is $\aleph_1$\+accessible.
 The cardinality of the ring $R$ can be arbitrarily large.
 We also discuss $R$\+corings with surjective counit and flat kernel.
 The proofs are straightforward applications of an abstract
category-theoretic principle going back to Ulmer.
 For right or two-sided $R$\+module flat $R$\+corings, our cardinality
estimate for the accessibility rank is not as good.
 A generalization to comonoid objects in accessible monoidal categories
is also considered.
\end{abstract}

\maketitle

\tableofcontents

\section*{Introduction}
\medskip

 The aim of this paper is to close a gaping hole in contemporary
literature concerning the local presentability rank of the categories
of coalgebras and corings over a fixed ring~$R$.
 There is little really new material in this paper, as our results
go back to the unpublished 1977 preprint of Ulmer~\cite{Ulm}.
 The results of this paper concerning various classes of \emph{flat}
coalgebras and corings are relatively more original.

 Some ideas of the preprint~\cite{Ulm} were taken up and developed in
a different form in the 1984 dissertation of Bird~\cite{Bir};
but that remained unpublished, too.
 In the end, it appears that Ulmer's ideas were almost completely
forgotten and not incorporated into the contemporary body of knowledge.
 This created the gaping hole mentioned in the previous paragraph.
 We hope that this paper will fare better than Ulmer's preprint and
people will remember the idea now.

 One approach used in contemporary literature goes back to
the 1974 paper of Barr~\cite[Theorem~3.1]{Bar}.
 This theorem claims that, for any commutative ring $R$, any
$R$\+coalgebra $C$ is the directed union of its subcoalgebras
$C'\subset C$ such that $C'$ is a pure $R$\+submodule in $C$ and
the cardinality of $C'$ does not exceed the cardinality of $R$
plus~$\aleph_0$.
 The necessity to require the subcoalgebra $C'$ to be a pure
$R$\+submodule in $C$ arises from the well-known difficulty with
nonexactness of the tensor product in the theory of coalgebras or
corings over a ring.
 Basically, the very notion of a subcoalgebra does not make much
sense without the purity condition.
 It was emphasized in the present author's recent paper~\cite{Pres}
that the purification-based approach to accessibility often leads to
suboptimal cardinality bounds.

 The references to~\cite{Bar} in connection with the claim that
the category of cocommutative $R$\+coalgebras is locally
$\kappa$\+presentable for any infinite cardinal $\kappa>|R|+\aleph_0$
can be found in~\cite[Theorem~2.2]{Rap} and~\cite[Section~3.4]{RaRi}.
 The question of finding the best possible cardinality estimate for
the local presentability rank of this category was posed
in~\cite[Remark~2.3]{Rap}.

 A category-theoretic approach to corings and comodules was
suggested by Porst in the paper~\cite{Por}.
 Unfortunately, that paper does not reflect the understanding of
the locally presentable/accessible category theory involved that
could be found in Ulmer~\cite{Ulm}.
 The result is that~\cite[Theorem~9]{Por} claims local presentability
of the categories of coalgebras and corings without any explicit
cardinality bound.

 The paper~\cite[Section~3]{Ste} treats the categories of $R$\+flat
cocommutative $R$\+coal\-ge\-bras for Pr\"ufer domains~$R$.
 In particular, \cite[Lemma~3.2]{Ste} claims that the category of
$R$\+flat $R$\+coalgebras is locally presentable, with the argument
based on~\cite{Por}.
 Once again, there is no explicit cardinality bound.

 The classical case of coalgebras over a field~$k$ is much easier
than the general case of a ground ring~$R$.
 It is well-known that any coassociative coalgebra over~$k$ is
the directed union of its finite-dimensional
subcoalgebras~\cite[Theorem~2.2.1]{Swe}.
 It follows that the categories of coassociative coalgebras (with
or without cocommutativity or counitality) are locally finitely
presentable, with finitely presentable objects being precisely all
the finite-dimensional coalgebras.

 The analogous assertion is \emph{not} true for Lie coalgebras
over a field~$k$ (e.~g., in characteristic~$0$, the coalgebra dual
to the topological Lie algebra $k[[z]]d/dz$ of vector fields on
the one-dimensional formal disk contains \emph{no} nonzero proper
subcoalgebras).
 Still, one can easily show that any Lie coalgebra over~$k$
is the $\aleph_1$\+directed union of its subcoalgebras of at most
countable dimension.
 Hence the category of Lie coalgebras over~$k$ is locally
$\aleph_1$\+presentable, and its $\aleph_1$\+presentable objects
are precisely all the at most countably-dimensional Lie coalgebras.

 Let us now describe the content of the present paper.
 Very generally, we consider a monoidal category $\sM$ and a pair of
infinite cardinals $\lambda<\kappa$ such that $\kappa$ is regular,
the underlying category of $\sM$ is $\kappa$\+accessible and has
colimits of $\lambda$\+indexed chains, and the tensor product functor
$\ot\:\sM\times\sM\rarrow\sM$ preserves $\kappa$\+directed colimits.
 Under these assumptions, we show that the category of (coassociative,
counital) comonoid objects in $\sM$ is $\kappa$\+accessible, and its
$\kappa$\+presentable objects are precisely all the comonoid structures
on $\kappa$\+presentable objects of~$\sM$.
 When $\sM$ is a symmetric monoidal category, the same assertions apply
to the category of cocommutative comonoid objects in~$\sM$.

 In particular, given a commutative ring $R$, we consider the category of
coassociative, counital $R$\+coalgebras $R\Coalg$ and
the category of cocommutative, coassociative, counital
$R$\+coalgebras $R\Cocom$.
 We show that both the categories are locally $\aleph_1$\+presentable.
 The $\aleph_1$\+presentable objects are the coalgebras that are
$\aleph_1$\+presentable \emph{as $R$\+modules}.
 Furthermore, we point out that similar results hold for noncounital
or noncoassociative coalgebras, for Lie coalgebras, for conilpotent
coalgebras, and for DG\+coalgebras.

 We also consider the full subcategories $R\Coalg_\fl\subset R\Coalg$
and $R\Cocom_\fl\subset R\Cocom$ consisting of the coalgebras that are
\emph{flat as $R$\+modules}.
 We show that both the categories $R\Coalg_\fl$ and $R\Cocom_\fl$ are
$\aleph_1$\+accessible.
 The $\aleph_1$\+presentable objects of these categories are
the coalgebras that are flat and $\aleph_1$\+presentable as
$R$\+modules.
 The similar assertions hold for the categories of noncounital or
noncoassociative coalgebras, Lie coalgebras, conilpotent coalgebras,
and DG\+coalgebras.

 Given a commutative ring~$k$ and an associative $k$\+algebra~$R$,
we consider the category $R_k\Corings$ of coassociative, counital
corings over $R$ (i.~e., comonoid objects in the monoidal category
of $R$\+$R$\+bimodules over~$k$).
 We show that the category $R_k\Corings$ is locally
$\aleph_1$\+presentable.
 The $\aleph_1$\+presentable objects are the corings that are
$\aleph_1$\+presentable \emph{as $R$\+$R$\+bimodules over~$k$}
(i.~e., as modules over the ring $R\ot_kR^\rop$).
 Furthermore, we consider the category $R_k\Corings_\bifl$ of
$R$\+corings that are \emph{flat as $R\ot_kR^\rop$\+modules}.
 We show that the category $R_k\Corings_\bifl$ is
$\aleph_1$\+accessible, and its $\aleph_1$\+presentable objects
are the corings that are flat and $\aleph_1$\+presentable as
$R\ot_kR^\rop$\+modules.

 As a variation on the previous result, we consider \emph{corings with
flat kernel}, i.~e., the $R$\+corings $C$ such that the counit map
$\epsilon\:C\rarrow R$ is surjective and its kernel $\overline C$ is
a flat $R$\+$R$\+bimodule over~$k$.
 Such corings appear in connection with the Burt--Butler theory of
bocses~\cite[\S3]{BB}, \cite[Definition~4.19 and Theorem~4.20]{Kuel}.
 Assuming that the ring $R\ot_kR^\rop$ is countably Noetherian (e.~g.,
just Noetherian in the usual sense), our result tells that the category
$R_k\Corings_\obifl$ of $R$\+corings with flat kernel is
$\aleph_1$\+accessible, and its $\aleph_1$\+presentable objects are
the corings with flat kernel that are $\aleph_1$\+presentable
(equivalently, $<\aleph_1$\+generated) as $R\ot_kR^\rop$\+modules.

 More natural flatness conditions on an $R$\+coring $C$ are that
$C$ be flat as a right $R$\+module, or both as a right $R$\+module
and as a left $R$\+module (but not necessarily as a bimodule).
 The former condition characterizes the $R$\+corings $C$ for which
the category of left $C$\+comodules is abelian with an exact forgetful
functor to the category of left $R$\+modules~\cite[Sections~18.6,
18.14, and~18.16]{BW}, \cite[Proposition~2.12(a)]{Prev},
\cite[Lemma~2.1]{Pflcc}; hence its obvious importance.
 In this context, our results are less impressive, in that
the cardinality estimate is not any better than the one obtainable
with the purification-based approach.
 The problem is to obtain a good bound for the accessibility rank
of the additive category $R\Bimodrfl R$ of right $R$\+flat
$R$\+$R$\+bimodules and the additive category $R\Bimodlrfl R$ of left
and right $R$\+flat $R$\+$R$\+bimodules.

 Still, we explain how our category-theoretic approach can be applied
to the questions from the previous paragraph, if only for illustrative
purposes.
 The conclusion is that the category $R_k\Corings_\rfl$ of
right $R$\+flat $R$\+corings over~$k$ and the category
$R_k\Corings_\lrfl$ of left and right $R$\+flat $R$\+corings over~$k$
are $\kappa$\+accessible for any regular cardinal $\kappa>|R|+\aleph_0$.
 The $\kappa$\+presentable objects are the corings (with the respective
flatness property) whose underlying set has cardinality less
than~$\kappa$.

 We do not discuss comodules in this paper.
 An extensive treatment of accessibility properties of comodule
categories can be found in the paper~\cite[Sections~2\+-6]{Pflcc};
see in particular~\cite[Theorem~3.1 and Remark~3.2]{Pflcc}.

 Finally, let us say a few words about our motivation.
 Why is it important to know that the category of $R$\+coalgebras is
locally $\aleph_1$\+presentable, rather than just locally
$\kappa$\+presentable for some big enough cardinal~$\kappa$\,?
 One answer is that, together with a good bound on the presentability
rank, we obtain a description of the related full subcategory of
$\kappa$\+presentable objects.
 Over a field~$k$, the classical theorem of Sweedler tells that every
coassociative coalgebra is the directed union of its finite-dimensional
subcoalgebras.
 Over an arbitrary commutative ring~$R$, our
Theorem~\ref{coalgebras-locally-presentable-theorem} tells that
every coalgebra is a directed (in fact, $\aleph_1$\+directed) colimit
of coalgebras whose underlying $R$\+modules are countably presentable.

 From the module-theoretic and coalgebra-theoretic perspective, this
becomes particularly important for $R$\+flat $R$\+coalgebras.
 According to our Theorem~\ref{flat-coalgebras-accessible-theorem}, every
flat coalgebra is a directed (actually, $\aleph_1$\+directed) colimit
of coalgebras whose underlying $R$\+modules are flat \emph{and}
countably presentable.
 Any countably presentable flat module has projective dimension at
most~$1$ \,\cite[Corollary~2.23]{GT}.
 Moreover, any $\aleph_m$\+presentable flat module has projective
dimension at most~$m$ \,\cite[Proposition~5.3]{Jen},
\cite[Corollary~2.4]{Pres}.
 Whenever one is interested in the functor $\Hom_R(C,{-})$ for
an $R$\+coalgebra~$C$ (e.~g., in connection with
the \emph{$C$\+contramodules}~\cite{Psemi,Prev,Pflcc}), the question
of the projective dimension of the $R$\+module $C$ becomes
singularly important.
 We refer to the paper~\cite[Corollary~6.6, Corollary~9.2, and
Theorem~10.2]{PS6} for an example of a context where it is helpful to 
know that all flat objects of a certain class are directed colimits of
flat objects of finite projective dimension.

\subsection*{Acknowledgement}
 I~am grateful to an anonymous referee for the suggestion to include
a discussion of the categories of comonoids in accessible monoidal
categories (in Section~\ref{monoidal-secn}).
 The author is supported by the GA\v CR project 23-05148S and
the Czech Academy of Sciences (RVO~67985840).

\Section{Category-Theoretic Preliminaries} \label{prelim-secn}

 We use the book~\cite{AR} as the background reference source on
locally presentable and accessible categories.
 We refer to~\cite[Definition~1.4, Theorem and Corollary~1.5,
Definition~1.13(1), and Remark~1.21]{AR} for a relevant discussion
of \emph{$\kappa$\+directed} vs.\ \emph{$\kappa$\+filtered colimits}
for a regular cardinal~$\kappa$.
 Let us only recall that a poset is said to be
\emph{$\kappa$\+directed} if every its subset of the cardinality
smaller than~$\kappa$ has an upper bound.

 Let $\kappa$~be a regular cardinal and $\sK$ be a category with
$\kappa$\+directed (equivalently, $\kappa$\+filtered) colimits.
 An object $S\in\sK$ is said to be
\emph{$\kappa$\+presentable}~\cite[Definitions~1.1 and~1.13(2)]{AR}
if the functor $\Hom_\sK(S,{-})\:\sK\rarrow\Sets$ preserves
$\kappa$\+directed colimits.
 We denote the full subcategory of $\kappa$\+presentable objects by
$\sK_{<\kappa}\subset\sK$.

 The category $\sK$ is called
\emph{$\kappa$\+accessible}~\cite[Definition~2.1]{AR} if there is
a \emph{set} of $\kappa$\+presentable objects $\sS\subset\sK$ such that
all the objects of $\sK$ are $\kappa$\+directed colimits of objects
from~$\sS$.
 If this is the case, then the $\kappa$\+presentable objects of $\sK$
are precisely all the retracts of the objects from~$\sS$.
 A $\kappa$\+accessible category where all colimits exist is called
\emph{locally $\kappa$\+presentable}~\cite[Definition~1.17 and
Theorem~1.20]{AR}.

 $\aleph_0$\+presentable objects are called \emph{finitely
presentable}, $\aleph_0$\+accessible categories are called
\emph{finitely accessible}~\cite[Remark~2.2(1)]{AR}, and locally
$\aleph_0$\+presentable categories are called \emph{locally finitely
presentable}~\cite[Definition~1.9 and Theorem~1.11]{AR}.
 We call $\aleph_1$\+presentable objects \emph{countably presentable},
$\aleph_1$\+accessible categories \emph{countably accessible}, and
locally $\aleph_1$\+presentable categories \emph{locally countably
presentable}.

\begin{prop} \label{product-proposition}
 Let $\kappa$~be a regular cardinal and $(\sK_\xi)_{\xi\in\Xi}$ be
a family of $\kappa$\+accessible categories, indexed by a set\/ $\Xi$
of the cardinality smaller than~$\kappa$.
 Then the Cartesian product\/ $\sK=\prod_{\xi\in\Xi}\sK_\xi$ is
also a $\kappa$\+accessible category.
 The $\kappa$\+presentable objects of\/ $\sK$ are precisely all
the collections of objects $(S_\xi\in\sK_\xi)_{\xi\in\Xi}$ such that
$S_\xi\in(\sK_\xi)_{<\kappa}$ for every $\xi\in\Xi$.
\end{prop}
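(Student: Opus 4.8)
The plan is to reduce everything to the fact that all (co)limits in a Cartesian product of categories are computed coordinatewise, together with a single commutation property in the category of sets. First I would record the bookkeeping facts I will use repeatedly: in $\sK=\prod_{\xi\in\Xi}\sK_\xi$ a cocone is a $\kappa$\+directed colimit cocone exactly when it is one in every coordinate, so $\sK$ has $\kappa$\+directed colimits because each $\sK_\xi$ does; and for objects $S=(S_\xi)$, $T=(T_\xi)$ one has $\Hom_\sK(S,T)=\prod_{\xi\in\Xi}\Hom_{\sK_\xi}(S_\xi,T_\xi)$ naturally. The one nonformal ingredient, and the place where the hypothesis $|\Xi|<\kappa$ enters, is the classical fact that in $\Sets$ a $\kappa$\+directed colimit commutes with any limit indexed by a diagram of cardinality smaller than~$\kappa$; a product over~$\Xi$ is such a limit~\cite{AR}.

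With these in hand, I would first prove the sufficiency half of the characterization: if $S_\xi\in(\sK_\xi)_{<\kappa}$ for every $\xi$, then $S=(S_\xi)$ is $\kappa$\+presentable. Given a $\kappa$\+directed colimit $T=\varinjlim_i T^{(i)}$ in $\sK$, the canonical comparison map for $\Hom_\sK(S,{-})$ factors as
\[
\varinjlim_i\prod_{\xi}\Hom_{\sK_\xi}(S_\xi,T^{(i)}_\xi)\rarrow\prod_\xi\varinjlim_i\Hom_{\sK_\xi}(S_\xi,T^{(i)}_\xi)\rarrow\prod_\xi\Hom_{\sK_\xi}(S_\xi,T_\xi),
\]
where the first arrow is invertible by the commutation property above (this is where $|\Xi|<\kappa$ is used) and the second is invertible because each $S_\xi$ is $\kappa$\+presentable. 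Hence $\Hom_\sK(S,{-})$ preserves $\kappa$\+directed colimits.

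Next I would establish $\kappa$\+accessibility. For each $\xi$ pick a set $\sS_\xi\subset(\sK_\xi)_{<\kappa}$ witnessing the accessibility of $\sK_\xi$, and put $\sS=\prod_\xi\sS_\xi$; this is a set of objects of $\sK$, each of which is $\kappa$\+presentable by the previous paragraph. To write an arbitrary $T=(T_\xi)$ as a $\kappa$\+directed colimit of objects of $\sS$, express $T_\xi=\varinjlim_{i\in I_\xi}S^{(i)}_\xi$ with $S^{(i)}_\xi\in\sS_\xi$ over a $\kappa$\+directed poset $I_\xi$, form the product poset $I=\prod_\xi I_\xi$, and take the diagram $I\rarrow\sK$ sending $(i_\xi)_\xi\mapsto(S^{(i_\xi)}_\xi)_\xi$. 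The poset $I$ is $\kappa$\+directed, because a subset of size $<\kappa$ has projections of size $<\kappa$ that are bounded in each $I_\xi$, and the tuple of bounds bounds the subset. Since colimits in $\sK$ are coordinatewise and each projection $I\rarrow I_\xi$ is cofinal, the colimit of this diagram is $T$. Thus every object of $\sK$ is a $\kappa$\+directed colimit of objects of $\sS$, and $\sK$ is $\kappa$\+accessible.

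Finally, the necessity half of the characterization follows from the general principle recalled in the preliminaries: once $\sS$ witnesses accessibility, the $\kappa$\+presentable objects of $\sK$ are precisely the retracts of objects of $\sS$. As idempotents split coordinatewise in a product category, a retract of $(S_\xi)\in\sS$ is a tuple $(R_\xi)$ in which each $R_\xi$ is a retract of $S_\xi\in\sS_\xi$, hence lies in $(\sK_\xi)_{<\kappa}$; the converse inclusion is the sufficiency proved above. The only genuinely nonformal step here is the coordinatewise commutation of products with $\kappa$\+directed colimits in $\Sets$, which is exactly where the regularity of $\kappa$ and the bound $|\Xi|<\kappa$ are indispensable; everything else is manipulation of componentwise (co)limits. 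I therefore expect that single commutation lemma to be the crux, with the rest being routine.
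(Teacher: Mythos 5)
Your proof is correct and takes essentially the same approach as the paper's: the paper gives no details itself, deferring to~\cite[Proposition~2.1]{Pacc} (a corrected version of~\cite[proof of Proposition~2.67]{AR}), and the argument there consists of precisely your three steps --- commuting the product over $\Xi$ with $\kappa$-directed colimits in $\Sets$ (the only point where $|\Xi|<\kappa$ is needed), exhibiting an arbitrary object as a colimit indexed by the product of the coordinate index posets via the cofinal projections, and using closure of $\kappa$-presentable objects under retracts for the converse. There is nothing to correct.
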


\begin{proof}
 This is a corrected version of~\cite[proof of Proposition~2.67]{AR}.
 See~\cite[Proposition~2.1]{Pacc} for the details.
\end{proof}

 In the following three theorems, we consider a regular
cardinal~$\kappa$ and a smaller infinite cardinal $\lambda<\kappa$
(so~$\kappa$ has to be uncountable).
 A \emph{$\lambda$\+indexed chain} (of objects and morphisms) in
a category $\sK$ is a directed diagram
$(K_i\to K_j)_{0\le i<j<\lambda}$ indexed by the ordered
set~$\lambda$.
 In the applications in the subsequent sections of this paper, we will
be mostly interested in the case $\lambda=\aleph_0$ and
$\kappa=\aleph_1$.

 Let $\sK$ and $\sL$ be two categories, and let $F$, $G\:\sK
\rightrightarrows\sL$ be a pair of parallel functors.
 The \emph{inserter category}~\cite[Section~2.71]{AR} of the pair of
parallel functors $F$, $G$ is the category $\sD$ whose objects are
pairs $(K,\phi)$, where $K\in\sK$ is an object and $\phi\:F(K)
\rarrow G(K)$ is a morphism in~$\sL$.
 The morphisms in $\sD$ are defined in the obvious way.

\begin{thm} \label{inserter-theorem}
 Let $\kappa$~be a regular cardinal and $\lambda<\kappa$ be a smaller
infinite cardinal.
 Let\/ $\sK$ and\/ $\sL$ be $\kappa$\+accessible categories where
colimits of $\lambda$\+indexed chains exist.
 Let $F$, $G\:\sK\rarrow\sL$ be a pair of parallel functors preserving
$\kappa$\+directed colimits.
 Assume further that the functor $F$ preserves colimits of
$\lambda$\+indexed chains and takes $\kappa$\+presentable objects
to $\kappa$\+presentable objects.
 Then the inserter category\/ $\sD$ of the pair of functors $F$, $G$ is
$\kappa$\+accessible.
 The $\kappa$\+presentable objects of\/ $\sD$ are precisely all
the pairs $(S,\phi)\in\sD$ with $S\in\sK_{<\kappa}$.
\end{thm}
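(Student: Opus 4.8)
The plan is to verify the two standard ingredients for $\kappa$\+accessibility of $\sD$---that it has $\kappa$\+directed colimits, and that a set of $\kappa$\+presentable objects generates it under them---after which the description of $\sD_{<\kappa}$ drops out. First I would observe that the forgetful functor $U\:\sD\rarrow\sK$, $(K,\phi)\mapsto K$, creates $\kappa$\+directed colimits. Given a $\kappa$\+directed diagram $(K_i,\phi_i)$ in $\sD$, put $K=\varinjlim_i K_i$ in $\sK$; since $F$ and $G$ preserve $\kappa$\+directed colimits we have $F(K)=\varinjlim_i F(K_i)$ and $G(K)=\varinjlim_i G(K_i)$, and the morphisms $G(K_i\to K)\circ\phi_i\:F(K_i)\rarrow G(K)$ constitute a cocone, inducing a unique $\phi\:F(K)\rarrow G(K)$ for which $(K,\phi)$ is the colimit of $(K_i,\phi_i)$ in~$\sD$.

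Next I would show that every pair $(S,\phi)$ with $S\in\sK_{<\kappa}$ is $\kappa$\+presentable in $\sD$; this is where the hypothesis that $F$ carries $\kappa$\+presentable objects to $\kappa$\+presentable objects enters, so that $F(S)\in\sL_{<\kappa}$. Given a morphism $(S,\phi)\rarrow(K,\psi)=\varinjlim_i(K_i,\psi_i)$, its underlying map $f\:S\rarrow K$ factors as $(K_i\to K)\circ f_i$ through some coprojection, because $S$ is $\kappa$\+presentable in~$\sK$. The lift $f_i$ need not yet respect the inserter structure, but the two composites $\psi_i\circ F(f_i)$ and $G(f_i)\circ\phi$ from $F(S)$ to $G(K_i)$ become equal after pushing forward along $G(K_i\to K)$ into $G(K)=\varinjlim_i G(K_i)$ (both equal $\psi\circ F(f)=G(f)\circ\phi$); since $F(S)$ is $\kappa$\+presentable and the colimit is $\kappa$\+directed, they already agree after some transition map $G(K_i\to K_j)$. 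Replacing $f_i$ by its composite into $K_j$ yields a genuine $\sD$\+morphism $(S,\phi)\rarrow(K_j,\psi_j)$ lifting the original, and an analogous, easier argument gives uniqueness; hence $\Hom_\sD((S,\phi),{-})$ preserves $\kappa$\+directed colimits.

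The crux is to exhibit every $(K,\phi)\in\sD$ as a $\kappa$\+directed colimit of such pairs, and this is the main obstacle. Write $K$ as the colimit of the canonical $\kappa$\+filtered diagram $T=\sK_{<\kappa}/K$ of its $\kappa$\+presentable approximations, so that $G(K)=\varinjlim_T G$. The point, going back to Ulmer, is that a single approximation $S_0\to K$ cannot generally be given an inserter structure compatible with $\phi$, since $\phi\circ F(S_0\to K)\:F(S_0)\rarrow G(K)$ need only factor through $G(S_1)$ for some \emph{larger} $S_1\in T$. So I would build by recursion on $n<\lambda$ a $\lambda$\+indexed chain $S_0\to S_1\to\cdots$ in $T$ (at limit stages simply taking an upper bound in the $\kappa$\+filtered $T$, using $\lambda<\kappa$) together with maps $\psi_n\:F(S_n)\rarrow G(S_{n+1})$ such that (i) $G(S_{n+1}\to K)\circ\psi_n=\phi\circ F(S_n\to K)$ and (ii) $\psi_{n+1}\circ F(S_n\to S_{n+1})=G(S_{n+1}\to S_{n+2})\circ\psi_n$. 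At each step one factors $\phi\circ F(S_n\to K)$ through some $G(S_{n+1})$ (using that $F(S_n)$ is $\kappa$\+presentable and $G(K)$ is $\kappa$\+directed), then chooses $S_{n+1}$, respectively $S_{n+2}$, far enough in $T$ to force (i), respectively (ii). Setting $S=\varinjlim_{n<\lambda}S_n$ (a $\lambda$\+indexed chain colimit, which exists by hypothesis), $S$ is again $\kappa$\+presentable because $\lambda<\kappa$ and $\sK_{<\kappa}$ is closed under colimits of diagrams of size less than~$\kappa$. Crucially, \emph{because $F$ preserves colimits of $\lambda$\+indexed chains}, $F(S)=\varinjlim_n F(S_n)$, so condition (ii) makes the composites $G(S_{n+1}\to S)\circ\psi_n$ into a cocone that assembles into a single $\psi\:F(S)\rarrow G(S)$, while condition (i) says exactly that $S\to K$ is a $\sD$\+morphism $(S,\psi)\rarrow(K,\phi)$. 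The hard part is precisely this bookkeeping maintaining (i) and (ii) simultaneously along the chain, together with the observation that $\lambda$ steps suffice because $F(S)$ is exhausted by the $F(S_n)$.

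Finally I would package the pieces. Let $\sS$ be the set of all $(S,\psi)$ with $S$ ranging over a skeleton of $\sK_{<\kappa}$ and $\psi$ over the set $\Hom_\sL(F(S),G(S))$; by the second paragraph these are $\kappa$\+presentable in~$\sD$. The comma category $\sS\downarrow(K,\phi)$ is $\kappa$\+filtered: for fewer than $\kappa$ objects $(S_i,\psi_i)\rarrow(K,\phi)$ and morphisms between them, one takes an upper bound of the underlying $S_i$ in $T$ and reruns the chain construction, arranging in addition the fewer-than-$\kappa$ compatibility equations $\psi\circ F(S_i\to S)=G(S_i\to S)\circ\psi_i$ to hold in $G(S)$---each holds after enlarging $S$ within the $\kappa$\+filtered $T$, and all can be met in the limit of one $\lambda$\+chain. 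Since the underlying objects of the constructed pairs are cofinal in $T$ and $U$ creates $\kappa$\+directed colimits, the canonical cocone gives $(K,\phi)=\varinjlim_{\sS\downarrow(K,\phi)}$, so $\sD$ is $\kappa$\+accessible. For the description of $\sD_{<\kappa}$: by $\kappa$\+accessibility every $\kappa$\+presentable object of $\sD$ is a retract of some $(S,\psi)\in\sS$, whence its underlying object is a retract of $S\in\sK_{<\kappa}$ and so lies in $\sK_{<\kappa}$; conversely the second paragraph shows every $(S,\phi)$ with $S\in\sK_{<\kappa}$ is $\kappa$\+presentable. This is the asserted characterization.
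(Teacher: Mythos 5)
Your overall strategy coincides with the one the paper relies on: the paper does not prove Theorem~\ref{inserter-theorem} in-house but cites Ulmer and \cite[Theorem~4.1]{Pacc}, and the illustrative remark at the end of Section~\ref{coalgebras-secn} sketches exactly your construction --- a $\lambda$\+indexed chain of $\kappa$\+presentable approximations $S_0\to S_1\to\dotsb$ inside the canonical diagram, with structure maps $\psi_n\:F(S_n)\rarrow G(S_{n+1})$ shifted one step up the chain, assembled into $\psi\:F(S)\rarrow G(S)$ using the hypothesis that $F$ (and only $F$) preserves colimits of $\lambda$\+indexed chains and takes $\kappa$\+presentables to $\kappa$\+presentables. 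You also correctly exploit the point that $G$ is required to preserve only $\kappa$\+directed colimits, which is precisely the weakening the paper's warning remark emphasizes. Your first, second and fourth paragraphs are sound.

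There is, however, a genuine gap in the recursion of your third paragraph when $\lambda>\aleph_0$. Your invariant~(ii) relates only \emph{consecutive} stages $\psi_n$, $\psi_{n+1}$. At a limit ordinal $\gamma<\lambda$ you take $S_\gamma$ to be an upper bound of the earlier $S_n$ and construct $\psi_\gamma$ afresh; nothing in (i)--(ii) then forces the coherence $\psi_\gamma\circ F(S_m\to S_\gamma)=G(S_{m+1}\to S_{\gamma+1})\circ\psi_m$ for $m<\gamma$, and induction on consecutive conditions never crosses a limit stage. Without this coherence the maps $G(S_{n+1}\to S)\circ\psi_n$ need not form a cocone on $\bigl(F(S_n)\bigr)_{n<\lambda}$, so $\psi$ need not exist. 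The repair is routine but must be stated: strengthen the invariant to (ii$'$) $\psi_n\circ F(S_m\to S_n)=G(S_{m+1}\to S_{n+1})\circ\psi_m$ for \emph{all} $m<n$. At each stage, successor or limit, this imposes fewer than $\kappa$ equations; each pair of maps out of the $\kappa$\+presentable object $F(S_m)$ agrees after composition into $G(K)$, hence is equalized by some morphism of the $\kappa$\+filtered canonical diagram, and $\kappa$\+filteredness yields one enlargement of $S_{n+1}$ validating all of them simultaneously. The same caveat applies to the ``rerun the chain construction'' step in your final paragraph (and there, note that you cannot argue via $G$ of the chain colimit, since $G$ does not preserve chain colimits --- you must equalize at a stage of the chain and push forward, exactly as in (ii$'$)). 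For $\lambda=\aleph_0$, $\kappa=\aleph_1$ --- the only case the paper actually uses --- there are no limit stages and your argument is complete as written.
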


\begin{proof}
 This result goes back to~\cite[Theorem~3.8, Corollary~3.9, and
Remark~3.11(II)]{Ulm}.
 For a recent exposition, see~\cite[Theorem~4.1]{Pacc}.

 A warning is due that in the formulations of the theorems both
in~\cite{Ulm} and in~\cite{Pacc} it is also assumed that the functor
$G$ preserves colimits of $\lambda$\+indexed chains.
 This assumption is not actually used in the proof (cf.~\cite[final
paragraph of the proof of Lemma~4.5]{Pacc}.
 This weakening of the assumptions of the inserter theorem is rarely
useful, but we will use it in
Theorem~\ref{comonoids-accessible-theorem}.
\end{proof}

 Let $\sK$ and $\sL$ be two categories, $F$, $G\:\sK\rightrightarrows
\sL$ be a pair of parallel functors, and $\phi$, $\psi\:
F\rightrightarrows G$ be a pair of parallel natural transformations.
 The \emph{equifier category}~\cite[Lemma~2.76]{AR} of the pair of
parallel natural transformations $\phi$ and~$\psi$ is the full
subcategory $\sE\subset\sK$ consisting of all objects $E\in\sK$ for
which the morphisms $\phi_E$ and $\psi_E\:F(E)\rightrightarrows G(E)$
are equal to each other in $\sL$, that is $\phi_E=\psi_E$.

\begin{thm} \label{equifier-theorem}
 Let $\kappa$~be a regular cardinal and $\lambda<\kappa$ be a smaller
infinite cardinal.
 Let\/ $\sK$ and\/ $\sL$ be $\kappa$\+accessible categories where
colimits of $\lambda$\+indexed chains exist.
 Let $F$, $G\:\sK\rarrow\sL$ be a pair of parallel functors preserving
$\kappa$\+directed colimits.
 Assume further that the functor $F$ preserves colimits of
$\lambda$\+indexed chains and takes $\kappa$\+presentable objects
to $\kappa$\+presentable objects.
 Let $\phi$ and $\psi\:F\rarrow G$ be a pair of parallel natural
transformations.
 Then the equifier category\/ $\sE$ of the pair of natural
transformations $\phi$, $\psi$ is $\kappa$\+accessible.
 The $\kappa$\+presentable objects of\/ $\sE$ are precisely all
the objects of\/ $\sE$ that are $\kappa$\+presentable in\/~$\sK$.
\end{thm}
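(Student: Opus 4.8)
The plan is to realize $\sE$ as a full subcategory of $\sK$ that is closed under $\kappa$\+directed colimits and is $\kappa$\+densely generated by $\sE\cap\sK_{<\kappa}$; the argument runs parallel to the inserter theorem (Theorem~\ref{inserter-theorem}) and in fact uses the same asymmetric hypotheses on $F$ and $G$ in the same way. First I would record the easy closure property: if $E=\operatorname{colim}_i E_i$ is a $\kappa$\+directed colimit in $\sK$ with every $E_i\in\sE$, then, because $F$ and $G$ preserve $\kappa$\+directed colimits and $\phi$, $\psi$ are natural, the morphisms $\phi_E$ and $\psi_E$ are the colimits of the families $(\phi_{E_i})$ and $(\psi_{E_i})$; as these families coincide termwise, $\phi_E=\psi_E$, so $E\in\sE$. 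Thus $\sE$ is closed under $\kappa$\+directed colimits, and these are computed as in $\sK$. This already gives one direction of the presentability claim: an object of $\sE$ that is $\kappa$\+presentable in $\sK$ is $\kappa$\+presentable in $\sE$, since $\Hom_\sE(E,-)$ is the restriction of $\Hom_\sK(E,-)$ to a subcategory on which $\kappa$\+directed colimits agree.

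The substance is to show that every $E\in\sE$ is a $\kappa$\+directed colimit of objects lying in $\sE\cap\sK_{<\kappa}$. The key lemma is a \emph{discrepancy-killing step}: given any $P\in\sK_{<\kappa}$ together with a morphism $p\:P\to E$, I would produce $P'\in\sK_{<\kappa}$ and a factorization $P\to P'\to E$ of $p$ such that $\phi_{P'}$ and $\psi_{P'}$ agree after precomposition with $F(P\to P')$. The construction: naturality of $\phi$, $\psi$ together with $\phi_E=\psi_E$ forces $G(p)\circ\phi_P=G(p)\circ\psi_P$ as morphisms $F(P)\to G(E)$. Presenting $E=\operatorname{colim}_i S_i$ as a $\kappa$\+directed colimit of $\kappa$\+presentable objects of $\sK$, the morphism $p$ factors through some $S_i$, and $G$ preserving $\kappa$\+directed colimits gives $G(E)=\operatorname{colim}_i G(S_i)$. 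Since $F$ carries $\kappa$\+presentable objects to $\kappa$\+presentable objects, $F(P)$ is $\kappa$\+presentable, so the two composites $F(P)\rightrightarrows G(S_i)\to G(E)$, being equal in the colimit, already become equal in some $G(S_j)$ with $j\ge i$; taking $P'=S_j$ and transporting the equality across naturality yields the claim. Note that this step uses exactly that $F$ preserves $\kappa$\+presentability and that $G$ preserves $\kappa$\+directed colimits, while making no demand that $G$ preserve $\lambda$\+indexed chains.

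Next I would iterate this step along a chain of length $\lambda$. Starting from a given $P\to E$ with $P\in\sK_{<\kappa}$, set $P_0=P$ and, at each successor stage, let $P_{n+1}$ be the object produced by the discrepancy-killing step applied to $P_n\to E$, so that $\phi_{P_{n+1}}$ and $\psi_{P_{n+1}}$ agree on the image of $F(P_n\to P_{n+1})$; at limit stages below $\lambda$ pass to the colimit of the chain so far (for $\lambda=\aleph_0$, the principal case, no limit stages occur). Let $T=\operatorname{colim}_{n<\lambda}P_n$, with its induced morphism $T\to E$. Because $\lambda<\kappa$, the object $T$ is a $\kappa$\+small colimit of $\kappa$\+presentable objects and hence lies in $\sK_{<\kappa}$. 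Crucially, because $F$ preserves colimits of $\lambda$\+indexed chains, $F(T)=\operatorname{colim}_n F(P_n)$, so to check $\phi_T=\psi_T$ it suffices to check agreement after precomposition with each $F(P_n\to T)$; and this holds since the $P_n$\+discrepancy was killed at stage $n+1$ and the equality persists into $G(T)$. Hence $T\in\sE\cap\sK_{<\kappa}$, and $P\to T\to E$ factors the original map.

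Finally I would assemble the colimit. The full subcategory $(\sE\cap\sK_{<\kappa})\downarrow E$ of the canonical $\kappa$\+filtered diagram $\sK_{<\kappa}\downarrow E$ is itself $\kappa$\+filtered: any family of fewer than $\kappa$ objects (or parallel morphisms) over $E$ admits an upper bound (or coequalizing morphism) in $\sK_{<\kappa}\downarrow E$ by $\kappa$\+filteredness of the latter, and applying the iterated construction to that bound lifts it into $\sE\cap\sK_{<\kappa}$. The construction also shows that this inclusion of diagrams is final, so the colimit of the subdiagram is again $E$. This exhibits every $E\in\sE$ as a $\kappa$\+directed colimit of objects of $\sE\cap\sK_{<\kappa}$; since $\sK_{<\kappa}$ is essentially small, $\sE\cap\sK_{<\kappa}$ is a set of $\kappa$\+presentable generators, so $\sE$ is $\kappa$\+accessible. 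The remaining direction of the presentability claim follows formally: a $\kappa$\+presentable object of $\sE$ is a retract of one of the generators and therefore $\kappa$\+presentable in $\sK$. The main obstacle is the middle step---designing the length-$\lambda$ iteration and its bookkeeping so that the single colimit $T$ genuinely lies in $\sE$, which is precisely where the hypotheses that $F$ preserves $\lambda$\+indexed chains and sends $\kappa$\+presentables to $\kappa$\+presentables are indispensable.
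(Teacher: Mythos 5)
Correct, and essentially the paper's own approach: the paper proves this theorem by citing Ulmer and \cite[Theorem~3.1]{Pacc}, and the discrepancy-killing iteration you describe --- factor $P\to E$ through the canonical $\kappa$\+directed diagram, use $\kappa$\+presentability of $F(P)$ and preservation of $\kappa$\+directed colimits by $G$ to equalize $\phi$ and $\psi$ at a later stage, iterate $\lambda$~times, and pass to the chain colimit $T$ with $F(T)=\operatorname{colim}_n F(P_n)$, so that only $F$ (not~$G$) needs to preserve $\lambda$\+indexed chain colimits --- is exactly the mechanism the paper itself sketches, in the coalgebra setting, in its illustrative remark at the end of Section~3. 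One minor caveat: for uncountable~$\lambda$ your instruction to ``pass to the colimit of the chain so far'' at limit stages is not licensed by the hypotheses (only colimits of $\lambda$\+indexed chains are assumed to exist in~$\sK$), but this is harmless, since your discrepancy-killing step always produces objects of the canonical $\kappa$\+directed diagram over~$E$, so at limit stages one can instead take an upper bound of the indices chosen so far.
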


\begin{proof}
 This also goes back to~\cite[Theorem~3.8, Corollary~3.9, and
Remark~3.11(II)]{Ulm}.
 For a recent exposition, see~\cite[Theorem~3.1]{Pacc}.
 Once again, in the formulations of the theorems both in~\cite{Ulm}
and in~\cite{Pacc} it is also assumed that the functor $G$ preserves
colimits of $\lambda$\+indexed chains.
 This assumption is not actually used in the proof (cf.~\cite[final
paragraph of the proof of Proposition~3.2]{Pacc}).
 This weakening of the assumptions of the equifier theorem is rarely
useful, but we will use it in
Theorem~\ref{comonoids-accessible-theorem}.
\end{proof}

 Let $\sK_1$, $\sK_2$, and $\sL$ be three categories, and
$F_1\:\sK_1\rarrow\sL$ and $F_2\:\sK_2\rarrow\sL$ be two functors.
 The \emph{pseudopullback category} of the pair of functors
$F_1$, $F_2$ is the category $\sC$ whose objects are triples
$(K_1,K_2,\theta)$, where $K_1\in\sK_1$ and $K_2\in\sK_2$ are
objects, and $\theta\:F_1(K_1)\simeq F_2(K_2)$ is an isomorphism
in~$\sL$.
 The morphisms in $\sC$ are defined in the obvious way.

\begin{thm} \label{pseudopullback-theorem}
 Let $\kappa$~be a regular cardinal and $\lambda<\kappa$ be a smaller
infinite cardinal.
 Let\/ $\sK_1$, $\sK_2$, and\/ $\sL$ be $\kappa$\+accessible categories
where colimits of $\lambda$\+indexed chains exist.
 Let $F_1\:\sK_1\rarrow\sL$ and $F_2\:\sK_2\rarrow\sL$ be two functors
preserving $\kappa$\+directed colimits and colimits of
$\lambda$\+indexed chains.
 Assume further that the functors $F_1$ and $F_2$ take
$\kappa$\+presentable objects to $\kappa$\+presentable objects.
 Then the pseudopullback\/ $\sC$ of the pair of functors $F_1$, $F_2$
is $\kappa$\+accessible.
 The $\kappa$\+presentable objects of\/ $\sC$ are precisely all
the triples $(S_1,S_2,\theta)\in\sC$ with $S_1\in(\sK_1)_{<\kappa}$
and $S_2\in(\sK_2)_{<\kappa}$.
\end{thm}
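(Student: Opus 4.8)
The plan is to realize the pseudopullback $\sC$ as an iterated PIE construction built out of the tools already assembled: one product (Proposition~\ref{product-proposition}), two inserters (Theorem~\ref{inserter-theorem}), and two equifiers (Theorem~\ref{equifier-theorem}). Concretely, an object of $\sC$ amounts to a pair $(K_1,K_2)$ together with a morphism $\theta\:F_1(K_1)\to F_2(K_2)$, a morphism $\theta'\:F_2(K_2)\to F_1(K_1)$ in the opposite direction, and the two equations $\theta'\theta=\id$ and $\theta\theta'=\id$ witnessing that $\theta$ is an isomorphism with inverse~$\theta'$. Since $\theta'$ is uniquely determined by~$\theta$, recording it as extra data changes nothing up to isomorphism of categories; the point is that each of these four pieces of structure is introduced by one of the three operations, whose effect on accessibility and on the class of $\kappa$\+presentable objects is controlled by the theorems above.

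First I would form the product $\sK_1\times\sK_2$. As the index set $\{1,2\}$ has cardinality $2<\aleph_1\le\kappa$, Proposition~\ref{product-proposition} shows $\sK_1\times\sK_2$ is $\kappa$\+accessible, with $\kappa$\+presentable objects the pairs $(K_1,K_2)$ having both $K_1\in(\sK_1)_{<\kappa}$ and $K_2\in(\sK_2)_{<\kappa}$; colimits of $\lambda$\+indexed chains exist and are computed componentwise. Write $P_1,P_2\:\sK_1\times\sK_2\to\sL$ for $P_i(K_1,K_2)=F_i(K_i)$. Both preserve $\kappa$\+directed colimits and colimits of $\lambda$\+indexed chains, and both send $\kappa$\+presentable objects to $\kappa$\+presentable ones, because colimits in the product are componentwise and the $F_i$ have these properties. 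I would then apply Theorem~\ref{inserter-theorem} to $(P_1,P_2)$ to obtain a $\kappa$\+accessible inserter $\sD$ whose objects are triples $(K_1,K_2,\theta)$ with $\theta\:F_1(K_1)\to F_2(K_2)$, the $\kappa$\+presentable ones being those with $(K_1,K_2)$ $\kappa$\+presentable in the product. The forgetful functor $\sD\to\sK_1\times\sK_2$ preserves $\kappa$\+directed colimits and colimits of $\lambda$\+indexed chains (these being computed in the base), so composing with $P_1,P_2$ yields functors $Q_1,Q_2\:\sD\to\sL$, $Q_i(K_1,K_2,\theta)=F_i(K_i)$, again preserving $\kappa$\+directed colimits and $\lambda$\+chains and sending $\kappa$\+presentables to $\kappa$\+presentables. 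A second application of Theorem~\ref{inserter-theorem}, now to the pair $(Q_2,Q_1)$, produces a $\kappa$\+accessible category $\sD'$ of quadruples $(K_1,K_2,\theta,\theta')$ with $\theta'\:F_2(K_2)\to F_1(K_1)$, the $\kappa$\+presentable objects being those with $K_1,K_2$ $\kappa$\+presentable.

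Next I would impose the two invertibility equations by equifiers. Let $R_1,R_2\:\sD'\to\sL$ send a quadruple to $F_1(K_1)$, resp.\ $F_2(K_2)$; as before these preserve the relevant colimits and $\kappa$\+presentability. On $R_1$ I take the two natural transformations $R_1\rightrightarrows R_1$ given by the identity and by the assignment $(K_1,K_2,\theta,\theta')\mapsto\theta'\theta$; naturality of the latter is exactly the compatibility of morphisms of $\sD'$ with $\theta$ and~$\theta'$. Theorem~\ref{equifier-theorem} (here $F=G=R_1$ satisfies all hypotheses) yields a $\kappa$\+accessible full subcategory $\sE_1\subset\sD'$ cut out by $\theta'\theta=\id$, whose $\kappa$\+presentable objects are those $\kappa$\+presentable in~$\sD'$. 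Restricting $R_2$ to $\sE_1$ and repeating with the transformations $\id$ and $\theta\theta'$ gives a $\kappa$\+accessible equifier $\sE_2\subset\sE_1$ cut out by $\theta\theta'=\id$. Tracing the characterizations of $\kappa$\+presentable objects back through $\sE_1$, $\sD'$, $\sD$, and the product shows that the $\kappa$\+presentable objects of $\sE_2$ are precisely the quadruples with $K_1\in(\sK_1)_{<\kappa}$ and $K_2\in(\sK_2)_{<\kappa}$. Finally, an object of $\sE_2$ is a quadruple in which $\theta$ is an isomorphism with two-sided inverse~$\theta'$; forgetting the redundant $\theta'$ gives an isomorphism of categories $\sE_2\cong\sC$ (a morphism compatible with $\theta$ is automatically compatible with $\theta^{-1}$), under which the $\kappa$\+presentable objects correspond as claimed.

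The routine part is the verification, at each of the four stages, of the hypotheses of the inserter and equifier theorems; this rests on the single structural fact that the forgetful functors out of inserters and the inclusions of equifiers preserve $\kappa$\+directed colimits and colimits of $\lambda$\+indexed chains (so that the composite functors $Q_i$, $R_i$ inherit the needed preservation properties) and transport the inductively described $\kappa$\+presentable objects correctly. The main thing to get right---and the only genuinely content-bearing step---is the observation that an isomorphism cannot be manufactured by a single inserter or equifier, so the invertibility of $\theta$ must be encoded by adjoining the inverse $\theta'$ as independent data via a second inserter and then forcing it to be a two-sided inverse by two equifiers; the uniqueness of inverses is what makes the resulting category agree on the nose with the pseudopullback rather than merely being equivalent to it.
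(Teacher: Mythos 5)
Your proposal is correct, but it takes a genuinely different route from the paper, whose ``proof'' of Theorem~\ref{pseudopullback-theorem} consists entirely of citations: the statement is referred to \cite{RaRo} (Pseudopullback Theorem~2.2), with the actual proof---a direct one---in \cite{CR} (Proposition~3.1), and another exposition in \cite{Pacc} (Corollary~5.1). You instead derive the theorem formally from results the paper already states: Proposition~\ref{product-proposition} once, Theorem~\ref{inserter-theorem} twice, and Theorem~\ref{equifier-theorem} twice, via the standard reduction of pseudolimits to PIE-limits---adjoin $\theta$, adjoin a candidate inverse $\theta'$, equify $\theta'\theta=\id$ and $\theta\theta'=\id$, and observe that forgetting the uniquely determined $\theta'$ is an isomorphism of categories onto $\sC$, since compatibility with $\theta$ implies compatibility with $\theta^{-1}$. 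The verifications you defer as routine are indeed routine and all go through: colimits of $\lambda$\+indexed chains exist in $\sD$, $\sD'$, and $\sE_1$ and are created in the base category, because at each stage the ``$F$\+side'' functor preserves such colimits (for the equifier $\sE_1$ one also uses that the cocone maps become jointly epimorphic after applying $R_1$, which is exactly why $\sE_1$ is closed under these colimits in $\sD'$); and the naturality of $\theta'\theta$ and $\theta\theta'$ holds because morphisms of $\sD'$ are required to commute with both structure maps. What your route buys is self-containedness: Theorem~\ref{pseudopullback-theorem} ceases to be an independently cited black box and becomes a corollary of the inserter and equifier theorems, which is presumably close in spirit to the exposition in \cite{Pacc} that the paper points to. What the direct proof in \cite{CR} buys is that it avoids the iterated bookkeeping across five applications of three different statements, constructing the required $\kappa$\+directed diagrams in the pseudopullback in one pass.
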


\begin{proof}
 This result, going back to~\cite[Remark~3.2(I), Theorem~3.8,
Corollary~3.9, and Remark~3.11(II)]{Ulm}, can be found
in~\cite[Pseudopullback Theorem~2.2]{RaRo} with the proof
in~\cite[Proposition~3.1]{CR}.
 For another exposition, see~\cite[Corollary~5.1]{Pacc}.
\end{proof}

 Before this section is finished, let us collect a couple of
well-known module-theoretic results concerning (local) presentability
and accessibility.
 Given an associative ring $R$, we denote by $R\Modl$ the abelian
category of left $R$\+modules and by $R\Modl_\fl\subset R\Modl$
the full subcategory of flat left $R$\+modules.

\begin{prop} \label{modules-locally-presentable}
 Let $R$ be an associative ring and $\kappa$~be a regular cardinal.
 Then the category $R\Modl$ of left $R$\+modules is locally
$\kappa$\+presentable.
 The $\kappa$\+presentable objects of $R\Modl$ are precisely all
the $R$\+modules that can be presented as the cokernel of a morphism
of free $R$\+modules with less than~$\kappa$ generators. \qed
\end{prop}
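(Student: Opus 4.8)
The plan is to prove Proposition~\ref{modules-locally-presentable} by verifying directly that $R\Modl$ meets the definition of a locally $\kappa$\+presentable category, and then separately identifying the $\kappa$\+presentable objects. The category $R\Modl$ is cocomplete (it is abelian with all coproducts, hence all colimits exist), so it suffices to exhibit a set of $\kappa$\+presentable generators whose $\kappa$\+directed colimits exhaust the category. The natural candidate is the set of all $R$\+modules admitting a presentation
\[
R^{(\alpha)}\xrightarrow{\ f\ } R^{(\beta)}\rarrow M\rarrow 0
\]
with $|\alpha|,|\beta|<\kappa$, up to isomorphism; this is a set (not a proper class) because the data $f$ is a matrix over $R$ with fewer than $\kappa$ rows and columns, and there are at most $|R|^{<\kappa}$ many such matrices up to the relevant equivalence.

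**First I would** show that every such $M$ is $\kappa$\+presentable, i.~e.\ that $\Hom_R(M,{-})$ preserves $\kappa$\+directed colimits. The key computation is the standard one: for a free module $R^{(\beta)}$ with $|\beta|<\kappa$ one has $\Hom_R(R^{(\beta)},N)=\prod_\beta N$, and since $|\beta|<\kappa$ and $\kappa$ is regular, this product commutes with $\kappa$\+directed colimits of the $N$'s. Applying $\Hom_R({-},N)$ to the presentation above yields a left-exact sequence, and I~would take the $\kappa$\+directed colimit over $N$, using that filtered colimits are exact in $\Sets$ (or in abelian groups) to conclude that the canonical map $\operatorname{colim}\Hom_R(M,N_i)\rarrow\Hom_R(M,\operatorname{colim}N_i)$ is a bijection via the five-lemma-style comparison of the two left-exact sequences.

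**Next I would** show conversely that every $R$\+module $M$ is a $\kappa$\+directed colimit of modules of the above form, and that the $\kappa$\+presentable objects are \emph{exactly} these cokernels (and no more). For the colimit presentation, I~would write $M$ as the union of the directed family of its submodules generated by fewer than $\kappa$ elements, together with fewer-than-$\kappa$ relations; more precisely, one takes the poset of pairs (a $<\kappa$\+subset of generators, a $<\kappa$\+set of relations among them) and checks it is $\kappa$\+directed with colimit $M$. This establishes $\kappa$\+accessibility, and combined with cocompleteness gives local $\kappa$\+presentability. The identification of \emph{all} $\kappa$\+presentable objects then follows from the general principle, recalled in the preliminaries, that in a $\kappa$\+accessible category the $\kappa$\+presentable objects are precisely the retracts of the chosen generating set $\sS$; here a retract of a finitely-$\kappa$-presented module is again of that form, since splitting off a direct summand only shrinks the number of generators and relations needed.

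**The main obstacle** I expect is the careful bookkeeping in the converse direction: verifying that the indexing poset of small sub-presentations is genuinely $\kappa$\+directed (every $<\kappa$\+family of such pairs has an upper bound, which uses regularity of $\kappa$ to keep the union small) and that its colimit recovers $M$ on the nose. The forward direction---that the listed cokernels are $\kappa$\+presentable---is routine once the product/colimit interchange is in hand. Since this is a standard and well-documented fact about module categories, I~would not belabor these verifications, but the delicate point worth stating explicitly is the regularity-of-$\kappa$ argument guaranteeing that a union of fewer than $\kappa$ small pieces is again small.
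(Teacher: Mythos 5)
Your proposal is correct, and it is the standard textbook verification. Note that the paper itself offers no proof at all: the statement carries a \qed{} because the result is invoked as well known, so there is no argument in the paper to compare against, and your write-up simply supplies the routine details the paper omits. The only step where your justification is a bit glib is the closure of your generating set under retracts: saying that splitting off a direct summand ``only shrinks the number of generators and relations'' hides the actual (easy) argument. If $P=M\oplus N$ has a presentation with fewer than $\kappa$ generators and relations, then $M$ is clearly $<\kappa$\+generated, and for the relations one notes that $N\cong P/M$ is $<\kappa$\+generated, so the kernel of a free cover $R^{(\beta)}\rarrow P\rarrow M$ is generated by the $<\kappa$ relations of $P$ together with $<\kappa$ lifted generators of $N$, which is again a set of cardinality less than $\kappa$ by regularity. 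With that one-line repair, your argument is complete.
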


\begin{prop} \label{flat-modules-accessible}
 Let $R$ be an associative ring and $\kappa$~be a regular cardinal.
 Then the category $R\Modl_\fl$ of flat left $R$\+modules is
$\kappa$\+accessible.
 The $\kappa$\+presentable objects of $R\Modl_\fl$ are precisely all
the flat $R$\+modules that are $\kappa$\+presentable in $R\Modl$.
\end{prop}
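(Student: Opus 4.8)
The plan is to derive the whole statement from the \emph{finite} accessibility of $R\Modl_\fl$, which is exactly what avoids the purification-based bounds that would otherwise force $\kappa>|R|+\aleph_0$. First I would record two standard module-theoretic facts. The class of flat left $R$\+modules is closed under arbitrary directed colimits taken in $R\Modl$ (as tensoring commutes with directed colimits); consequently $R\Modl_\fl$ has $\aleph_0$\+directed colimits and the inclusion $R\Modl_\fl\hookrightarrow R\Modl$ preserves them. Secondly, by the Lazard--Govorov theorem every flat module is a directed colimit of finitely generated free modules \cite{GT}. Since the finitely generated free modules form a set, are finitely presentable in $R\Modl$ (hence, by the preservation just noted, $\aleph_0$\+presentable in $R\Modl_\fl$), and every flat module is an $\aleph_0$\+directed colimit of them, the category $R\Modl_\fl$ is $\aleph_0$\+accessible, with the finitely generated projective modules as its finitely presentable objects.

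Next I would upgrade $\aleph_0$\+accessibility to $\kappa$\+accessibility for an arbitrary regular cardinal $\kappa$ by the uniformization theorem for accessible categories \cite{AR}. The only hypothesis to check is the sharp inequality $\aleph_0\triangleleft\kappa$, which is automatic: for $\kappa=\aleph_0$ there is nothing to prove, and for every uncountable regular $\kappa$ the relation $\aleph_0\triangleleft\kappa$ holds. Thus $R\Modl_\fl$ is $\kappa$\+accessible for every regular $\kappa$, and its $\kappa$\+directed colimits continue to be computed in $R\Modl$.

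The step I expect to be the main obstacle is the precise identification of the $\kappa$\+presentable objects as the flat modules that are $\kappa$\+presentable \emph{in $R\Modl$}; this is where one must be careful to keep the argument independent of $|R|$. The easy inclusion is that a flat module $M$ which is $\kappa$\+presentable in $R\Modl$ is $\kappa$\+presentable in $R\Modl_\fl$: for a $\kappa$\+directed colimit $F=\varinjlim_i F_i$ formed in $R\Modl_\fl$, hence in $R\Modl$, the map $\varinjlim_i\Hom(M,F_i)\to\Hom(M,F)$ is bijective because the $\Hom$\+sets coincide with those of $R\Modl$ and $M$ is $\kappa$\+presentable there. For the reverse inclusion I would use the explicit description supplied by uniformization: every $\kappa$\+presentable object of $R\Modl_\fl$ is a retract of a directed colimit of a $\kappa$\+small diagram of finitely generated projective modules. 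Such a colimit is flat, being a directed colimit of flat modules, and it is $\kappa$\+presentable in $R\Modl$, being a $\kappa$\+small colimit of $\kappa$\+presentable objects (recall that in the locally $\kappa$\+presentable category $R\Modl$ the $\kappa$\+presentable objects are closed under $\kappa$\+small colimits \cite{AR}). Finally, a retract of a flat, respectively $\kappa$\+presentable, module is again flat, respectively $\kappa$\+presentable, so the retract shares both properties.

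Combining these observations yields that $R\Modl_\fl$ is $\kappa$\+accessible and that its $\kappa$\+presentable objects are precisely the flat $R$\+modules that are $\kappa$\+presentable in $R\Modl$, with no restriction relating $\kappa$ to the cardinality of $R$. I emphasize that the genuine subtlety lies entirely in the third paragraph: the accessibility itself is cheap once Lazard--Govorov is in hand, whereas the clean characterization of the presentable objects depends on reading off from the uniformization theorem that the generating $\kappa$\+presentable objects are simultaneously flat and $\kappa$\+presentable as plain $R$\+modules. It is exactly this categorical route, rather than a purification argument, that secures the sharp, $|R|$\+independent rank.
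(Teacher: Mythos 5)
Your proposal is correct, and it is essentially the route taken by the sources the paper cites for this proposition (\cite[Proposition~10.2]{Pacc}, \cite[Lemma~1.2]{Pflcc}): Govorov--Lazard gives finite accessibility of $R\Modl_\fl$ with the finitely generated projectives as finitely presentable objects, the sharp inequality $\aleph_0\triangleleft\kappa$ valid for every uncountable regular $\kappa$ raises the accessibility rank via \cite[Theorem~2.11 and Example~2.13(1)]{AR}, and the retract-of-$\kappa$-small-directed-colimits description from the proof of that theorem yields the identification of the $\kappa$\+presentable objects. Your emphasis on the last step being the delicate one (and on its independence of $|R|$) is exactly right.
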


\begin{proof}
 See~\cite[Proposition~10.2]{Pacc} or~\cite[Lemma~1.2]{Pflcc} for
some discussion.
\end{proof}

\Section{Comonoid Objects in Monoidal Categories} \label{monoidal-secn}

 We suggest the book~\cite[Section~VII.1]{McL} as a reference on
monoidal categories, and~\cite[Sections~VII.7 and~XI.1]{McL} for
symmetric monoidal categories.

 Let us just fix the notation.
 A \emph{monoidal category} $\sM$ is supposed to be associative and 
unital; the monoidal operation is denoted by $\ot\:\sM\times\sM\rarrow
\sM$ and called the \emph{tensor product}; the unit object is denoted
by $\mathbf1\in\sM$.
 The \emph{associativity and unitality constraints} are functorial
isomorphisms $(K\ot L)\ot M\simeq K\ot(L\ot M)$ and $\mathbf1\ot M\simeq
M\simeq M\ot\mathbf1$ given for all objects $K$, $L$, $M\in\sM$.
 A \emph{symmetric monoidal category} is also endowed with
a \emph{commutativity constraint}, which is a functorial isomorphism
$K\ot L\simeq L\ot K$ for all objects $K$, $L\in\sM$.

 We do not go into a discussion of the commutativity of pentagonal
and hexagonal coherence diagrams, as the questions of coherence play
no role in our exposition below.
 Instead, we will adopt a policy of benign neglect and write simply
$K\ot L\ot M$ as a common notation for either $(K\ot L)\ot M$ or
$K\ot(L\ot M)$.

 A \emph{comonoid object} in a monoidal category $\sM$ is an object
$C\in\sM$ endowed with two morphisms of \emph{comultiplication}
$\mu\:C\rarrow C\ot C$ and \emph{counit} $\epsilon\:C\rarrow\mathbf1$
satisfying the conventional coassociativity and counitality axioms.
 Specifically, the two compositions
$$
 C\rarrow C\ot C\rightrightarrows C\ot C\ot C
$$
of the comultiplication morphism with two morphisms induced by
the comultiplication morphism must be equal to each other,
$(\mu\ot\id_C)\circ\mu=(\id_C\ot\mu)\circ\mu$, and the two compositions
$$
 C\rarrow C\ot C\rightrightarrows C
$$
of the comultiplication morphism with the two morphisms induced by
the counit morphism must be equal to the identity morphism,
$(\epsilon\ot\id_C)\circ\mu=\id_C=(\id_C\ot\epsilon)\circ\mu$.

 In a symmetric monoidal category $\sM$, one can speak of cocommutative
comonoid objects.
 A comonoid object $C$ is said to be \emph{cocommutative} if
the composition
$$
 C\rarrow C\ot C\rarrow C\ot C
$$
of the comultiplication morphism with the commutativity constraint
morphism $\sigma_C\:C\ot C\rarrow C\ot C$ is equal to
the comultiplication morphism, $\sigma_C\circ\mu=\mu$.

 Morphisms of comonoid objects $C\rarrow D$ are defined in the obvious
way.
 We denote the category of comonoid objects in a monoidal category
$\sM$ by $\sM\Comon$.
 When $\sM$ is a symmetric monoidal category, the full subcategory of
cocommutative comonoid objects is denoted by $\sM\Cocom\subset
\sM\Comon$.

\begin{thm} \label{comonoids-accessible-theorem}
\textup{(a)} Let $\kappa$~be a regular cardinal, $\lambda<\kappa$ be
a smaller infinite cardinal, and\/ $\sM$ be a monoidal category.
 Assume that the underlying category of\/ $\sM$ is $\kappa$\+accessible,
that colimits of $\lambda$\+indexed chains exist in it, and that
the monoidal operation functor\/ $\ot\:\sM\times\sM\rarrow\sM$ preserves
$\kappa$\+directed colimits (in both arguments).
 Then the category of comonoid objects\/ $\sM\Comon$ is
$\kappa$\+accessible.
 An object of\/ $\sM\Comon$ is $\kappa$\+presentable if and only if
its underlying object is $\kappa$\+presentable in\/~$\sM$. \par
\textup{(b)} In the context of part~(a), assume additionally that\/
$\sM$ is a symmetric monoidal category.
 Then the category of cocommutative comonoid objects\/ $\sM\Cocom$ is
$\kappa$\+accessible.
 An object of\/ $\sM\Cocom$ is $\kappa$\+presentable if and only if
its underlying object is $\kappa$\+presentable in\/~$\sM$.
\end{thm}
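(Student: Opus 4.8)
The plan is to realize $\sM\Comon$ as the top of a short tower of inserter and equifier categories over $\sM$, built by successive applications of Theorems~\ref{inserter-theorem} and~\ref{equifier-theorem}. At every stage the forgetful functor down to $\sM$ serves as the functor~$F$ of those theorems — so it must preserve $\lambda$\+indexed chains, $\kappa$\+directed colimits, and $\kappa$\+presentable objects — whereas the functors assembled from the tensor product serve as~$G$, for which only preservation of $\kappa$\+directed colimits is required.

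First I would adjoin the comultiplication and counit in a single inserter step. Take $\sL=\sM\times\sM$, which is $\kappa$\+accessible by Proposition~\ref{product-proposition} since $2<\kappa$, and let $F(C)=(C,C)$ and $G(C)=(C\ot C,\;\mathbf1)$. Objects of the resulting inserter category $\sD$ are then triples $(C,\mu,\epsilon)$ with $\mu\:C\rarrow C\ot C$ and $\epsilon\:C\rarrow\mathbf1$. The diagonal $F$ preserves $\lambda$\+indexed chains and $\kappa$\+directed colimits, and it sends $\kappa$\+presentable objects to $\kappa$\+presentable objects by the description in Proposition~\ref{product-proposition}; hence Theorem~\ref{inserter-theorem} makes $\sD$ a $\kappa$\+accessible category whose $\kappa$\+presentable objects are exactly those with $\kappa$\+presentable underlying object~$C$.

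Next I would impose the axioms as equifiers inside $\sD$. Coassociativity equates the natural transformations $(\mu\ot\id_C)\circ\mu$ and $(\id_C\ot\mu)\circ\mu$ from the forgetful functor $U\:\sD\rarrow\sM$ to the functor $C\mapsto C\ot C\ot C$; each counitality axiom equates a composite such as $(\epsilon\ot\id_C)\circ\mu$, read off through the unit constraint, with $\id_C$, as natural transformations $U\rightrightarrows U$. Three applications of Theorem~\ref{equifier-theorem} produce the full subcategory $\sM\Comon\subset\sD$, still $\kappa$\+accessible and with the same characterization of its $\kappa$\+presentable objects. For part~(b) one further equifier, equating $\sigma_C\circ\mu$ with $\mu$ as natural transformations from $U$ to $C\mapsto C\ot C$ (using the commutativity constraint $\sigma$ of the symmetric structure), carves out $\sM\Cocom$ with the identical conclusion. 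To keep the tower running I would record that every category in it has colimits of $\lambda$\+indexed chains, created by $U$: both the inserter and the equifier constructions are closed under any colimit that the functor $F$ preserves, since for the equifier one precomposes the two transformations with the jointly epic colimit cocone $F(c_i)$ and invokes $\phi_{E_i}=\psi_{E_i}$.

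The step I expect to be the real obstacle is checking that the functors $C\mapsto C\ot C$ and $C\mapsto C\ot C\ot C$ used as~$G$ preserve $\kappa$\+directed colimits. This is not formal from preservation in each variable separately: writing $I$ for the indexing $\kappa$\+directed poset, iterating the one-variable preservation presents the tensor square of $\operatorname{colim}_{i}C_i$ as a colimit over $I\times I$, and one then uses that the diagonal $I\rarrow I\times I$ is cofinal to collapse it back to $\operatorname{colim}_i(C_i\ot C_i)$. The essential point is that these same tensor-power functors do \emph{not} preserve colimits of $\lambda$\+indexed chains, so it is crucial that Theorems~\ref{inserter-theorem} and~\ref{equifier-theorem} were formulated without demanding that $G$ preserve $\lambda$\+indexed chains — exactly the weakening advertised in their proofs.
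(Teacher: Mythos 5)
Your proposal is correct and takes essentially the same route as the paper: an inserter over $\sL=\sM\times\sM$ with $F(C)=(C,C)$ and $G(C)=(C\ot C,\mathbf1)$ to adjoin $\mu$ and~$\epsilon$, followed by equifiers to impose the axioms, together with the same two key observations that the paper relies on — the cofinality of the diagonal in $I\times I$ to get preservation of $\kappa$\+directed colimits by the tensor-power functors, and the fact that Theorems~\ref{inserter-theorem} and~\ref{equifier-theorem} do not require $G$ to preserve $\lambda$\+indexed chain colimits. The only difference is one of packaging: the paper imposes coassociativity, counitality, and cocommutativity simultaneously in a single equifier step with target $\sL=\sM^4$ (resp.\ $\sM^3$ for part~(a)), whereas you iterate equifiers into $\sM$ one axiom at a time, which needs the extra bookkeeping (that each intermediate full subcategory inherits $\lambda$\+chain and $\kappa$\+directed colimits and the description of its $\kappa$\+presentable objects) that you correctly supply via the jointly-epic-cocone argument.
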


\begin{proof}
 The argument is based on Theorems~\ref{inserter-theorem}
and~\ref{equifier-theorem} together with
Proposition~\ref{product-proposition}.
 Let us spell out the proof of part~(b); part~(a) is similar.

 In order to apply Theorem~\ref{inserter-theorem}, put $\sK=\sM$
and $\sL=\sM\times\sM$.
 Let $F\:\sK\rarrow\sL$ be the functor taking an object $C\in\sM$
to the pair of objects $(C,C)\in\sM\times\sM$, and let $G\:\sK
\rarrow\sL$ be the functor taking the object $C$ to the pair of
objects $(C\ot C,\>\mathbf1)\in\sM\times\sM$.
 Then the inserter category $\sD$ is the category of objects
$C\in\sM$ endowed with two morphisms $\mu\:C\rarrow C\ot C$ and
$\epsilon\:C\rarrow\mathbf1$.

 Let us spell out a couple of additional words about how this works.
 An object $C\in\sM$ as such is endowed with \emph{no} multiplication
or unit morphism structure.
 It is just an object of a monoidal category~$\sM$ (such as, e.~g.,
an $R$\+module, if $\sM$ is the monoidal category of modules over
a commutative ring~$R$).
 By the definition, an object of the inserter category $\sD$ is
an object $C\in\sK=\sM$ together with an arbitrary morphism
$F(C)\rarrow G(C)$ in $\sL=\sM\times\sM$.
 This means a morphism $(C,C)\rarrow(C\ot C,\>\mathbf1)$ in
$\sM\times\sM$, i.~e., an arbitrary pair of morphisms
$\mu\:C\rarrow C\ot C$ and $\epsilon\:C\rarrow\mathbf1$.
 So an object of $\sD$ is an object $C\in\sM$ endowed with two
pieces of additional data, viz., two arbitrary morphisms
$\mu\:C\rarrow C\ot C$ and $\epsilon\:C\rarrow\mathbf1$.
 The morphisms in $\sD$ are defined in the obvious way as
the morphisms in $\sM$ compatible with the additional structure
provided by the maps~$\mu$ and~$\epsilon$.

 By assumption and by Proposition~\ref{product-proposition}, both
the categories $\sK$ and $\sL$ are $\kappa$\+accessible.
 The assumptions of Theorem~\ref{inserter-theorem} are satisfied,
and we can conclude that the category $\sD$ is $\kappa$\+accessible.
 It is clear that colimits of $\lambda$\+indexed chains exist in
$\sD$ and are preserved by the forgetful functor $\sD\rarrow\sM$.
 Furthermore, Theorem~\ref{inserter-theorem} provides a description
of the full subcategory of $\kappa$\+presentable objects in~$\sD$.

 In order to apply Theorem~\ref{equifier-theorem}, put $\sK=\sD$
and $\sL=\sM^4=\sM\times\sM\times\sM\times\sM$.
 Let $F\:\sK\rarrow\sL$ be the functor taking a triple
$(C,\mu,\epsilon)\in\sD$ to the quadruple of objects
$(C,C,C,C)\in\sL$, and let $G\:\sK\rarrow\sL$ be the functor taking
the same object $(C,\mu,\epsilon)\in\sD$ to the quadruple of objects
$(C\ot C\ot C,\>C,\>C,\>C\ot C)\in\sL$.

 Let $\phi\:F\rarrow G$ be the natural transformation acting by
the following quadruple of morphisms $\phi_1\:C\rarrow C\ot C\ot C$,
\ $\phi_2\:C\rarrow C$, \ $\phi_3\:C\rarrow C$, and
$\phi_4\:C\rarrow C\ot C$.
 The morphism~$\phi_1$ is the composition $(\mu\ot\id_C)\circ\mu\:
C\rarrow C\ot C\ot C$.
 The morphism~$\phi_2$ is the composition $(\epsilon\ot\id_C)\circ\mu\:
C\rarrow C$.
 The morphism~$\phi_3$ is the composition $(\id_C\ot\epsilon)\circ\mu\:
C\rarrow C$.
 The morphism $\phi_4$ is the composition $\sigma_C\circ\mu\:C\rarrow
C\ot C$.

 Let $\psi\:F\rarrow G$ be the natural transformation acting by
the following quadruple of morphisms $\psi_1\:C\rarrow C\ot C\ot C$,
\ $\psi_2\:C\rarrow C$, \ $\psi_3\:C\rarrow C$, and
$\psi_4\:C\rarrow C\ot C$.
 The morphism~$\psi_1$ is the composition $(\id_C\ot\mu)\circ\mu\:
C\rarrow C\ot C\ot C$.
 The morphisms $\psi_2$ and~$\psi_3$ are the identity morphisms
$\id_C\:C\rarrow C$.
 The morphism~$\psi_4$ is the morphism $\mu\:C\rarrow C\ot C$.
 Then the equifier category $\sE$ is the category of cocommutative
comonoid objects in $\sM$, that is $\sE=\sM\Cocom$.

 The construction of the equifier category imposes the axioms of
coassociativty, counitality, and cocommutativity on the morphisms
$\mu\:C\rarrow C\ot C$ and $\epsilon\:C\rarrow\nobreak\mathbf1$.
 This is accomplished by passing to the full subcategory $\sE$ of
the category $\sK=\sD$.
 The full subcategory $\sE\subset\sD$ consists of all the objects
$(C,\mu,\epsilon)\in\sD$ for which the morphisms $\mu$ and~$\epsilon$
satisfy the coassociativity, counitality, and cocommutativity equations.

 By Proposition~\ref{product-proposition} and the discussion above,
both the categories $\sK=\sD$ and $\sL$ are $\kappa$\+accessible.
 The assumptions of Theorem~\ref{equifier-theorem} are satisfied,
and we can conclude that the category $\sE=\sM\Cocom$ is
$\kappa$\+accessible.
 Theorem~\ref{equifier-theorem} also provides the desired description
of the full subcategory of $\kappa$\+presentable objects in~$\sE$.
\end{proof}

\begin{rem} \label{other-comonoids-accessible-remark}
 Some variations on the theme of
Theorem~\ref{comonoids-accessible-theorem} are possible, producing
other examples of $\kappa$\+accessible categories of comonoid objects,
together with explicit descriptions of their full subcategories of
$\kappa$\+presentable objects.
 In particular, this applies to the categories of noncounital and/or
noncoassociative comonoids in~$\sM$.
 All one needs to do in these cases is to drop the related elements
from the proof of Theorem~\ref{comonoids-accessible-theorem} above.
\end{rem}

\Section{Coalgebras over a Commutative Ring} \label{coalgebras-secn}

 Let $R$ be a commutative ring.
 A (\emph{coassociative, counital}) \emph{coalgebra} over $R$ is
a comonoid object in the monoidal category of $R$\+modules $R\Modl$
with respect to the operation of tensor product over~$R$.
 In other words, a coalgebra $C$ is an $R$\+module endowed with two
maps of \emph{comultiplication} $\mu\:C\rarrow C\ot_RC$ and
\emph{counit} $\epsilon\:C\rarrow R$, which must be $R$\+linear maps
satisfying the conventional coassociativity and counitality axioms.
 Specifically, the two compositions
$$
 C\rarrow C\ot_R C\rightrightarrows C\ot_RC\ot_RC
$$
of the comultiplication map with two maps induced by
the comultiplication map must be equal to each other, that is
$(\mu\ot\id_C)\circ\mu=(\id_C\ot\mu)\circ\mu$, and the two compositions
$$
 C\rarrow C\ot_RC\rightrightarrows C
$$
of the comultiplication map with two maps induced by
the counit map must be equal to the identity map, that is
$(\epsilon\ot\id_C)\circ\mu=\id_C=(\id_C\ot\epsilon)\circ\mu$.

 An $R$\+coalgebra $C$ is said to be \emph{cocommutative} if
$\sigma_C\circ\mu=\mu$,
$$
 C\rarrow C\ot_RC\rarrow C\ot_RC,
$$
where $\sigma_C\:C\ot_RC\rarrow C\ot_RC$ is the map permuting
the tensor factors.
 The notion of a cocommutative $R$\+coalgebra makes sense because
$R\Modl$ is naturally a symmetric monoidal category.

 Morphisms of coalgebras $C\rarrow D$ are defined in the obvious way
as $R$\+linear maps compatible with the comultiplication and counit.
 We denote the category of $R$\+coalgebras by $R\Coalg$ and
the full subcategory of cocommutative $R$\+coalgebras by
$R\Cocom\subset R\Coalg$.

\begin{lem} \label{coalgebras-cocomplete}
 All colimits exist in the categories $R\Coalg$ and $R\Cocom$.
 The forgetful functors $R\Coalg\rarrow R\Modl$ and
$R\Cocom\rarrow R\Modl$ preserve colimits.
\end{lem}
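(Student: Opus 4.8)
The plan is to show that the forgetful functor $U\:R\Coalg\rarrow R\Modl$ \emph{creates} all colimits; since the module category $R\Modl$ is cocomplete, this simultaneously yields the existence of colimits in $R\Coalg$ and their preservation by~$U$, and the cocommutative case will follow because $R\Cocom$ is a full subcategory. Concretely, let $(C_i)_{i\in I}$ be a small diagram of $R$\+coalgebras, with comultiplications $\mu_i$ and counits $\epsilon_i$, and let $C$ be the colimit of the underlying diagram of $R$\+modules, with coprojections $\iota_i\:C_i\rarrow C$. I would endow $C$ with a coalgebra structure directly.

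First I would define the comultiplication $\mu\:C\rarrow C\ot_RC$ as the unique $R$\+linear map satisfying $\mu\circ\iota_i=(\iota_i\ot\iota_i)\circ\mu_i$ for every~$i$; the family on the right is a cocone precisely because each transition map of the diagram is a morphism of coalgebras, so the universal property of $C$ supplies~$\mu$. Here $C\ot_RC$ is treated as a \emph{fixed} target object, so no colimit\+preservation or exactness property of $\ot_R$ is invoked. In the same way I would define the counit $\epsilon\:C\rarrow R$ by $\epsilon\circ\iota_i=\epsilon_i$.

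Next I would verify the coassociativity and counitality axioms for $(\mu,\epsilon)$, the key principle being that two $R$\+linear maps out of $C$ coincide as soon as they agree after precomposition with every~$\iota_i$. Precomposing $(\mu\ot\id_C)\circ\mu$ and $(\id_C\ot\mu)\circ\mu$ with $\iota_i$ and using functoriality of $\ot_R$ together with the defining relation for~$\mu$, both sides reduce to $((\iota_i\ot\iota_i)\ot\iota_i)\circ(\cdots)\circ\mu_i$, where the middle factor is $\mu_i\ot\id_{C_i}$ in one case and $\id_{C_i}\ot\mu_i$ in the other; coassociativity of $C_i$ then equates them. Counitality is checked identically, and in the cocommutative case cocommutativity transfers by the same method using naturality of the symmetry~$\sigma$, so the colimit $C$ of a diagram in $R\Cocom$ is again cocommutative. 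Finally, for the universal property I would take an arbitrary cocone of coalgebra morphisms $C_i\rarrow E$, form the induced $R$\+linear map $C\rarrow E$, and check — again by precomposition with the $\iota_i$ and functoriality of $\ot_R$ — that it respects $\mu$ and~$\epsilon$, hence is a morphism of coalgebras; fullness of $R\Cocom$ in $R\Coalg$ then gives the statement for $R\Cocom$ as well.

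I do not anticipate a genuine obstacle: the whole argument is a chain of diagram chases governed by the universal property of the colimit in $R\Modl$ and the functoriality of~$\ot_R$. The one point worth emphasizing — and the only place where one might mistakenly expect difficulty — is precisely that \emph{nothing} about the interaction of $\ot_R$ with colimits is required, in sharp contrast to the dual situation of monoid objects (such as rings), whose colimits are famously \emph{not} created by the forgetful functor. The asymmetry is structural: the comultiplication maps \emph{out} of the colimit $C$ into the fixed objects $C\ot_RC$ and $C\ot_RC\ot_RC$, so only the universal property of $C$ as a colimit, and never a description of these tensor products as colimits, enters the proof.
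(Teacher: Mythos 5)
Your proof is correct, and it takes a genuinely different route from the paper's. You prove that the forgetful functor \emph{creates} all colimits at once: the comultiplication and counit on the module-level colimit $C$ are forced by the requirement that the coprojections be coalgebra morphisms, they exist because the relevant families are cocones, and all axioms plus the universal property follow by the single principle that maps out of a colimit are determined by precomposition with the coprojections. This is a purely formal diagram chase that never touches elements and, as you rightly stress, never needs any compatibility of $\ot_R$ with colimits; in fact your argument works verbatim for comonoid objects in an arbitrary monoidal category, so it would also equip the categories $\sM\Comon$ and $\sM\Cocom$ of Section~\ref{monoidal-secn} with colimits under no hypotheses at all. The paper instead reduces to coproducts and coequalizers: coproducts are handled by putting the obvious coalgebra structure on a direct sum, and for coequalizers of $f,g\:C\rightrightarrows D$ it verifies by an explicit element computation that the image $I$ of $f-g$ is a \emph{coideal} in $D$, so that the quotient $D/I$ inherits a unique coalgebra structure. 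What the paper's concrete approach buys is the auxiliary coalgebra-theoretic information (the coideal description of kernels of coalgebra quotients, and the comparison with the notion of coideal in~\cite{BW}); what your approach buys is uniformity, brevity, and generality, at the price of giving no explicit description of the coequalizer beyond its universal property. Both arguments are complete; the one point you should make explicit if you write yours up is the cocone verification $(\iota_j\ot\iota_j)\circ\mu_j\circ f_{ij}=(\iota_i\ot\iota_i)\circ\mu_i$ for transition maps $f_{ij}$, which is where the hypothesis that the transition maps are coalgebra morphisms enters.
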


\begin{proof}
 Let us discuss the colimits in $R\Coalg$; the situation in
$R\Cocom$ is similar.
 It suffices to check that coproducts and coequalizers exist in
$R\Coalg$ and are preserved by the forgetful functor to $R\Modl$.
 Given a family of $R$\+coalgebras $(C_\xi)_{\xi\in\Xi}$, it is easy
to construct a coalgebra structure on the direct sum
$\bigoplus_{\xi\in\Xi}C_\xi$ taken in $R\Modl$ and show that
the resulting coalgebra is the coproduct of $C_\xi$ in $R\Coalg$.
 Let us explain the coequalizers in more detail.

 Suppose given a pair of parallel morphisms of $R$\+coalgebras
$f$, $g\:C\rightrightarrows D$.
 We claim that the image $I$ of the $R$\+module map $f-g\:C\rarrow D$
is a coideal in~$D$.
 This means that the counit map $\epsilon_D\:D\rarrow R$
vanishes in restriction to~$I$ and the image of the composition
$I\rarrow D\rarrow D\ot_RD$ of the inclusion map $I\rarrow D$
with the comultiplication map $\mu_D\:D\rarrow D\ot_RD$ is contained
in the image of the map $I\ot_R D\oplus D\ot_R I\rarrow D\ot_RD$
induced by the inclusion map $I\rarrow D$.

 Indeed, one clearly has $\epsilon_D(f(c)-g(c))=\epsilon_D(f(c))-
\epsilon_D(g(c))=\epsilon_C(c)-\epsilon_C(c)=0$ for all $c\in C$.
 Now let the element $\mu_C(c)\in C\ot_RC$ be equal to
$\sum_{i=1}^n c'_i\ot c''_i$, where $c'_i$, $c''_i\in C$.
 Then we have $\mu_D(f(c)-g(c))=(f\ot f-g\ot g)(\mu_C(c))=
\sum_{i=1}^n (f(c'_i)\ot f(c''_i)-g(c'_i)\ot g(c''_i))=
\sum_{i=1}^n (f(c'_i)-g(c'_i))\ot f(c''_i)+
\sum_{i=1}^n g(c'_i)\ot (f(c''_i)-g(c''_i))\in\im(I\ot_RD\oplus
D\ot_RI)\subset D\ot_RD$, as desired.

 It follows that there is a unique $R$\+coalgebra structure on
the quotient $R$\+module $E=D/I$ such that the natural surjective map
$p\:D\rarrow E$ is a coalgebra morphism (see, e.~g.,
\cite[Section~2.4]{BW}, and notice that the image of the map
$I\ot_R D\oplus D\ot_R I\rarrow D\ot_RD$ is always contained in
the kernel of the map $p\ot p\:D\ot_RD\rarrow E\ot_RE$;
so any coideal in the sense explained above is also a coideal in
the sense of~\cite{BW}).
 One can readily check that the morphism $p\:D\rarrow E$ is
the coequalizer of the morphisms $f$ and~$g$ in $R\Coalg$.

 In fact, it is easy to see that the image of the map
$I\ot_R D\oplus D\ot_R I\rarrow D\ot_RD$ coincides with
the kernel of the map $p\ot p\:D\ot_RD\rarrow E\ot_RE$.
 So a coideal in the sense explained above is the same thing as
a coideal in the sense of~\cite{BW}.
\end{proof}

 The following theorem is the main result of this section.

\begin{thm} \label{coalgebras-locally-presentable-theorem}
 Let $R$ be a commutative ring. \par
\textup{(a)} The category of coassociative, counital $R$\+coalgebras
$R\Coalg$ is locally\/ $\aleph_1$\+pre\-sent\-able.
 The\/ $\aleph_1$\+presentable objects of $R\Coalg$ are precisely
all the $R$\+coalgebras whose underlying $R$\+modules are
countably presentable as objects of $R\Modl$. \par
\textup{(b)} The category of coassociative, cocommutative, counital
$R$\+coalgebras $R\Cocom$ is locally\/ $\aleph_1$\+presentable.
 The\/ $\aleph_1$\+presentable objects of $R\Cocom$ are precisely
all the cocommutative $R$\+coalgebras whose underlying $R$\+modules
are countably presentable as objects of $R\Modl$.
\end{thm}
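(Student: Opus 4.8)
The plan is to recognize $R\Coalg$ and $R\Cocom$ as the categories of comonoid and of cocommutative comonoid objects in the symmetric monoidal category $(R\Modl,\ot_R)$, and then to apply Theorem~\ref{comonoids-accessible-theorem} with $\kappa=\aleph_1$ and $\lambda=\aleph_0$. By the very definitions recalled above, a coassociative counital $R$\+coalgebra is exactly a comonoid object in $R\Modl$ with respect to $\ot_R$, and a cocommutative $R$\+coalgebra is a cocommutative comonoid object; moreover the forgetful functor $R\Coalg\rarrow R\Modl$ coincides with the underlying-object functor of $(R\Modl)\Comon$. Consequently, once the accessibility statement is established, the description of the $\aleph_1$\+presentable objects in terms of the underlying $R$\+modules will be read off directly from the theorem.

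First I would verify that $\sM=R\Modl$ satisfies the hypotheses of Theorem~\ref{comonoids-accessible-theorem} for $\kappa=\aleph_1$ and $\lambda=\aleph_0$ (note $\aleph_0<\aleph_1$ and $\aleph_1$ is uncountable, as required). By Proposition~\ref{modules-locally-presentable}, the category $R\Modl$ is locally $\aleph_1$\+presentable, hence in particular $\aleph_1$\+accessible; being cocomplete, it certainly admits colimits of countable chains. The one genuine point to check is that $\ot_R\:R\Modl\times R\Modl\rarrow R\Modl$ preserves $\aleph_1$\+directed colimits in both arguments jointly. For this I would use that $M\ot_R{-}$ is left adjoint to $\Hom_R(M,{-})$, and therefore cocontinuous in each variable separately; a two-variable functor that is separately cocontinuous preserves $\kappa$\+directed colimits jointly, since for a $\kappa$\+directed diagram $(M_i,N_i)$ one has $(\operatorname{colim}_i M_i)\ot_R(\operatorname{colim}_j N_j)\cong\operatorname{colim}_i\operatorname{colim}_j(M_i\ot_R N_j)$ by separate cocontinuity, and the diagonal $I\rarrow I\times I$ is cofinal because $I$ is directed, collapsing the double colimit to $\operatorname{colim}_i(M_i\ot_R N_i)$.

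With the hypotheses in place, Theorem~\ref{comonoids-accessible-theorem}(a) yields that $R\Coalg=(R\Modl)\Comon$ is $\aleph_1$\+accessible and that an $R$\+coalgebra is $\aleph_1$\+presentable precisely when its underlying $R$\+module is $\aleph_1$\+presentable in $R\Modl$; part~(b) of that theorem, applied via the symmetric monoidal structure on $R\Modl$, gives the corresponding statements for $R\Cocom$. This already establishes the description of $\aleph_1$\+presentable objects asserted in both parts of the theorem.

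It remains to upgrade $\aleph_1$\+accessibility to local $\aleph_1$\+presentability, for which I would invoke Lemma~\ref{coalgebras-cocomplete}: all colimits exist in $R\Coalg$ and in $R\Cocom$. Since a $\kappa$\+accessible category in which all colimits exist is locally $\kappa$\+presentable, this completes both (a) and (b). I do not anticipate any serious obstacle: the argument is essentially a bookkeeping application of the abstract machinery, and the only step requiring a small verification is the joint preservation of $\aleph_1$\+directed colimits by $\ot_R$, which is a formal consequence of separate cocontinuity together with directedness of the index.
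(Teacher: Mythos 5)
Your proposal is correct and is essentially the paper's own approach: the paper's proof of Theorem~\ref{coalgebras-locally-presentable-theorem} explicitly notes, as its closing alternative, that the $\aleph_1$\+accessibility of $R\Coalg$ and $R\Cocom$ together with the description of their $\aleph_1$\+presentable objects can be obtained from Theorem~\ref{comonoids-accessible-theorem} applied to $\sM=R\Modl$ with $\ot=\ot_R$, which, combined with Lemma~\ref{coalgebras-cocomplete} to upgrade accessibility to local presentability, is exactly your argument. The paper's primary spelled-out proof merely re-instantiates Theorems~\ref{inserter-theorem} and~\ref{equifier-theorem} directly on $R\Modl$ rather than routing through Theorem~\ref{comonoids-accessible-theorem} (the same underlying machinery), and your verification that $\ot_R$ preserves $\aleph_1$\+directed colimits --- separate cocontinuity via the adjunction with $\Hom_R$ plus cofinality of the diagonal embedding of a directed poset into its square --- is the same argument the paper gives in its expository remark following the theorem.
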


\begin{proof}
 This is~\cite[Example~4.3]{Ulm}.
 Let us spell out a proof of part~(b) based on the techniques
from~\cite{Pacc} collected in Section~\ref{prelim-secn}.
 In view of Lemma~\ref{coalgebras-cocomplete}, it suffices to show that
the category $R\Cocom$ is $\aleph_1$\+accessible and describe its full
subcategory of $\aleph_1$\+presentable objects.

 Firstly we apply Theorem~\ref{inserter-theorem} (for $\kappa=\aleph_1$
and $\lambda=\aleph_0$).
 Put $\sK=R\Modl$ and $\sL=R\Modl\times R\Modl$.
 Let $F\:\sK\rarrow\sL$ be the functor taking an $R$\+module $C$ to
the pair of $R$\+modules $(C,C)\in R\Modl\times R\Modl$, and let
$G\:\sK\rarrow\sL$ be the functor taking the $R$\+module $C$ to
the pair of $R$\+modules $(C\ot_RC,\>R)\in R\Modl\times R\Modl$.
 Then the inserter category $\sD$ is the category of $R$\+modules $C$
endowed with two $R$\+linear maps $\mu\:C\rarrow C\ot_RC$ and
$\epsilon\:C\rarrow R$.

 By Propositions~\ref{modules-locally-presentable}
and~\ref{product-proposition}, both the categories $\sK$ and $\sL$
are $\aleph_1$\+accessible (in fact, locally $\aleph_1$\+presentable).
 The assumptions of Theorem~\ref{inserter-theorem} are satisfied,
and we can conclude that the category $\sD$ is $\aleph_1$\+accessible.
 Theorem~\ref{inserter-theorem} also provides a description of the full
subcategory of $\aleph_1$\+presentable objects in~$\sD$.

 Now we apply Theorem~\ref{equifier-theorem} (again for $\kappa=
\aleph_1$ and $\lambda=\aleph_0$).
 Put $\sK=\sD$ and $\sL=(R\Modl)^4=R\Modl\times R\Modl\times R\Modl
\times R\Modl$.
 Let $F\:\sK\rarrow\sL$ be the functor taking a triple $(C,\mu,\epsilon)
\in\sD$ to the quadruple of $R$\+modules $(C,C,C,C)\in\sL$,
and let $G\:\sK\rarrow\sL$ be the functor taking the same object
$(C,\mu,\epsilon)\in\sD$ to the quadruple of $R$\+modules
$(C\ot_RC\ot_RC,\>C,\>C,\>C\ot_RC)\in\sL$.

 Finally, we choose the pair of natural transformations
$\phi$, $\psi\:F\rarrow G$ as in the proof of
Theorem~\ref{comonoids-accessible-theorem}.
 Then the equifier category $\sE$ is the category of coassociative,
counital, cocommutative coalgebras $C$ over $R$, that is $\sE=R\Cocom$.

 By Propositions~\ref{modules-locally-presentable}
and~\ref{product-proposition}, and by the discussion above, both
the categories $\sK=\sD$ and $\sL$ are $\aleph_1$\+accessible.
 The assumptions of Theorem~\ref{equifier-theorem} are satisfied, and
we can conclude that the category $\sE=R\Cocom$ is
$\aleph_1$\+accessible.
 Theorem~\ref{equifier-theorem} also provides the desired description
of the full subcategory of $\aleph_1$\+presentable objects in~$\sE$.

 Essentially the same argument is applicable in the case of
the noncocommutative coalgebras (part~(a) of the theorem).
 One only needs to drop the elements related to cocommutativity in
the proof above (i.~e., the morphisms $\phi_4$ and~$\psi_4$,
and the related components of the category $\sL$ and the functors
$F$ and~$G$ in the context of Theorem~\ref{equifier-theorem}).
 So one takes $\sL=(R\Modl)^3=R\Modl\times R\Modl\times R\Modl$ when
applying Theorem~\ref{equifier-theorem} for the proof of part~(a).

 Alternatively, the assertions about $\aleph_1$\+accessibility of
the categories $R\Coalg$ and $R\Cocom$ together with the descriptions
of $\aleph_1$\+presentable objects in these categories can be obtained
as particular cases of Theorem~\ref{comonoids-accessible-theorem} for
the monoidal category $\sM=R\Modl$ with the monoidal operation functor
of tensor product $\ot=\ot_R$.
\end{proof}

\begin{rem} \label{other-coalgebras-locally-presentable-remark}
 Numerous variations on the theme of
Theorem~\ref{coalgebras-locally-presentable-theorem} are possible,
producing other examples of locally $\aleph_1$\+presentable
categories of $R$\+coalgebras.
 Let us mention some of them.

\smallskip
 (1)~Noncounital and/or noncoassociative $R$\+coalgebras also form
locally $\aleph_1$\+pre\-sent\-able categories.
 All one needs to do in these cases is to drop the related elements
from the proof of Theorem~\ref{coalgebras-locally-presentable-theorem}
above, or refer to Remark~\ref{other-comonoids-accessible-remark}.

\smallskip
 (2)~A Lie coalgebra $L$ over a commutative ring $R$ is
an $R$\+module endowed with an $R$\+linear map
$\delta\:L\rarrow\bigwedge^2_RL$ that can be extended to
a differential defining a DG\+algebra structure on
the exterior algebra $\bigwedge^*_RL$ of the $R$\+module~$L$.
 Explicitly, denoting the map~$\delta$ in a Sweedler-style
notation~\cite[Section~1.2]{Swe} by $l\longmapsto\delta(l)=
l_{\{1\}}\wedge l_{\{2\}}\in\bigwedge^2_RL$ for $l\in L$,
the Lie coalgebra axiom (the dual version of the Jacobi identity)
is the equation
$$
 l_{\{1\}\{1\}}\wedge l_{\{1\}\{2\}}\wedge l_{\{2\}}
 = l_{\{1\}}\wedge l_{\{2\}\{1\}}\wedge l_{\{2\}\{2\}}
$$
in~$\bigwedge^3_RL$ \,\cite[Section~D.2.1]{Psemi}.

 The category of Lie coalgebras over an arbitrary commutative
ring $R$ is locally $\aleph_1$\+presentable.
 In order to obtain a proof, one needs to (drop the unitality and)
replace the tensor power functors $C\longmapsto C\ot_RC$ and
$C\longmapsto C\ot_RC\ot_RC$ with the exterior power functors
$L\longmapsto\bigwedge^2_RL$ and $L\longmapsto\bigwedge^3_RL$ in
the proof of Theorem~\ref{coalgebras-locally-presentable-theorem}.

\smallskip
 (3)~A coassociative, noncounital coalgebra $D$ over a commutative
ring $R$ is said to be \emph{conilpotent} if for every element
$d\in D$ there exists an integer $n\ge1$ such that the element~$d$
is annihilated by the iterated comultiplication map
$\mu^{(n)}\:D\rarrow D^{\ot n+1}$, that is $\mu^{(n)}(d)=0$.
 In order to show that the category of conilpotent $R$\+coalgebras
is locally $\aleph_1$\+presentable, one can consider the following
inserter category~$\sD$.
 Put $\sK=\sL=R\Modl$, let $F\:\sK\rarrow\sL$ be the identity functor,
and let $G\:\sK\rarrow\sL$ be the functor assigning to every
$R$\+module $D\in R\Modl$ the $R$\+module $\bigoplus_{n=1}^\infty
D^{\ot n} = D\oplus D\ot_RD\oplus D\ot_RD\ot_RD\oplus\dotsb$.
 Then the suitable equifier full subcategory $\sE\subset\sD$ is
the category of conilpotent $R$\+coalgebras.

\smallskip
 (4)~A \emph{DG\+coalgebra} $C^\bu$ over $R$ is a comonoid object in
the (symmetric) monoidal category of \emph{complexes} of $R$\+modules
with respect to the operation of tensor product~$\ot_R$.
 In order to show that the category of DG\+coagebras over $R$ is
locally $\aleph_1$\+presentable, one needs to know that the category of
complexes of $R$\+modules is locally finitely presentable and its
$\aleph_1$\+presentable objects are precisely all the complexes of
$\aleph_1$\+presentable $R$\+modules~\cite[Lemma~1.5]{Pres}.
 Then one argues similarly to the proof of
Theorem~\ref{coalgebras-locally-presentable-theorem}, replacing
modules by complexes everywhere.
 Alternatively, one can refer to
Theorem~\ref{comonoids-accessible-theorem}.

\smallskip
 In each of the cases~(1\+-4), the description of the full subcategory
of $\aleph_1$\+presentable objects in the respective locally
$\aleph_1$\+presentable category of coalgebras is similar to the one in
Theorem~\ref{coalgebras-locally-presentable-theorem}.
\end{rem}

\begin{rem}
 Let us offer a sketchy and approximate explanation of the workings
of the proof of Theorem~\ref{coalgebras-locally-presentable-theorem},
based as it is on Theorems~\ref{inserter-theorem}
and~\ref{equifier-theorem}.
 Following Remark~\ref{other-coalgebras-locally-presentable-remark}(1),
we ignore the counit and consider the category of coassociative,
noncocommutative, noncounital coalgebras $C$ over
a fixed commutative ring~$R$.

\smallskip
 (1)~In order to illustrate the workings of the proof of
Theorem~\ref{inserter-theorem} (as spelled out in~\cite[proof of
Theorem~4.1]{Pacc}) in the situation at hand, let us start with
coalgebras that are not even coassociative.
 So we are interested in $R$\+modules $C$ endowed with an $R$\+module
map $\nu_C\:C\rarrow C\ot_RC$.
 How does one approach $C$ with countably presentable $R$\+modules $U$
endowed with $R$\+module maps $\nu_U\:U\rarrow U\ot_RU$\,?

 Suppose given an $R$\+module $C$ endowed with an $R$\+linear map
$\nu_C\:C\rarrow C\ot_RC$, a countably presentable $R$\+module $S$,
and an $R$\+module map $S\rarrow C$.
 Let us explain, roughly following~\cite[proof of Lemma~4.5]{Pacc},
how to construct a countably presentable $R$\+module $U$ with
an $R$\+module map $\nu_U\:U\rarrow U\ot_RU$, an $R$\+module map
$U\rarrow C$ compatible with $\nu_U$ and~$\nu_C$, and an $R$\+module
map $S\rarrow U$ making the triangular diagram $S\rarrow U\rarrow C$
commutative.

 Let $C=\varinjlim_{\xi\in\Xi}T_\xi$ be a representation of
the $R$\+module $C$ as a colimit of countably presentable $R$\+modules
$T_\xi$ indexed by an $\aleph_1$\+directed poset~$\Xi$.
 Notice that we do \emph{not} assume any comultiplication maps~$\nu$
on the $R$\+modules~$T_\xi$.
 However, we know that $C\ot_RC=\varinjlim_{\xi\in\Xi}T_\xi\ot_RT_\xi$.

 Indeed, the tensor product functor~$\ot_R$ preserves colimits
in both of its arguments (e.~g., since it has a right adjoint functor
$\Hom_R({-},{-})$).
 So we have $C\ot_RC=\varinjlim_{\alpha\in\Xi}T_\alpha\ot_RC=
\varinjlim_{\beta\in\Xi}\varinjlim_{\alpha\in\Xi}
T_\alpha\ot_RT_\beta=\varinjlim_{(\alpha,\beta)\in\Xi^2}
T_\alpha\ot_RT_\beta$.
 Here the partial order on the Cartesian square $\Xi^2=\Xi\times\Xi$
is defined by the rule $(\alpha,\beta)\le(\alpha',\beta')$ if
$\alpha\le\alpha'$ and $\beta\le\beta'$.
 It remains to point out that the poset $\Xi$, embedded diagonally
into $\Xi^2$, is a cofinal subposet in $\Xi^2$, since the poset $\Xi^2$
is directed and for any pair of elements $(\alpha,\beta)\in\Xi^2$ there
exists an element $\xi\in\Xi$ such that $(\alpha,\beta)\le(\xi,\xi)$
in $\Xi^2$; cf.~\cite[Section~0.11 and Exercise~1.o(3)]{AR}.

 Since the $R$\+module $S$ is countably presentable and the poset $\Xi$
is $\aleph_1$\+directed, the $R$\+module map $S\rarrow C$ factorizes
as $S\rarrow T_{\xi_0}\rarrow C$ for some index $\xi_0\in\Xi$.
 Now consider the composition $T_{\xi_0}\rarrow C\rarrow C\ot_RC$.
 Since the $R$\+module $T_{\xi_0}$ is countably presentable, the poset
$\Xi$ is $\aleph_1$\+directed, and $C\ot_RC=\varinjlim_{\xi\in\Xi}
T_\xi\ot_RT_\xi$, we can find an index $\xi_1\ge\xi_0$ in $\Xi$ such
that the composition $T_{\xi_0}\rarrow C\rarrow C\ot_RC$ factorizes
through the $R$\+module morphism $T_{\xi_1}\ot_RT_{\xi_1}\rarrow
C\ot_RC$.
 So we obtain an $R$\+module map $T_{\xi_0}\rarrow T_{\xi_1}\ot_R
T_{\xi_1}$.

 Next we consider the composition $T_{\xi_1}\rarrow C\rarrow C\ot_RC$.
 As in the previous paragraph, we can find an index $\xi'_2\ge\xi_1$
in $\Xi$ such that the composition $T_{\xi_1}\rarrow C\rarrow C\ot_RC$
factorizes through the $R$\+module map $T_{\xi'_2}\ot_R T_{\xi'_2}
\rarrow C\ot_RC$.
 So we obtain an $R$\+module map $T_{\xi_1}\rarrow T_{\xi'_2}\ot_R
T_{\xi'_2}$.
 Now, at this point, the leftmost square of the diagram
$$
 \xymatrix{
  {T_{\xi_0}} \ar[r] \ar[d] & {T_{\xi_1}} \ar[r] \ar[d] & C \ar[d] \\
  {T_{\xi_1}\ot_RT_{\xi_1}} \ar[r] & {T_{\xi'_2}\ot_R T_{\xi'_2}}
  \ar[r] & {C\ot_RC}
 }
$$
\emph{need not} be commutative, while the rightmost and the outer
squares are commutative.
 Since the $R$\+module $T_{\xi_0}$ is countably presentable, the poset
$\Xi$ is $\aleph_1$\+directed, and $C\ot_RC=\varinjlim_{\xi\in\Xi}
T_\xi\ot_RT_\xi$, we can find an index $\xi_2\ge\xi'_2$ in $\Xi$ such
that the whole diagram
$$
 \xymatrix{
  {T_{\xi_0}} \ar[r] \ar[d] & {T_{\xi_1}} \ar[r] \ar[d] & C \ar[d] \\
  {T_{\xi_1}\ot_RT_{\xi_1}} \ar[r] & {T_{\xi_2}\ot_R T_{\xi_2}}
  \ar[r] & {C\ot_RC}
 }
$$
becomes commutative.

 Proceeding in this way, we construct an $\aleph_0$\+indexed chain
of indices $\xi_0\le\xi_1\le\xi_2\le\dotsb$ in $\Xi$ and a compatible
sequence of $R$\+module maps $T_{\xi_i}\rarrow T_{\xi_{i+1}}
\ot_R T_{\xi_{i+1}}$.
 It remains to put $U=\varinjlim_{i\in\aleph_0}T_{\xi_i}$.

\smallskip
 (2)~Now, in order to illustrate the workings of the proof of
Theorem~1.3 (as spelled out in~\cite[proof of Theorem~3.1]{Pacc}),
suppose that we have managed to prove that every noncoassociative
$R$\+coalgebra $(D,\nu_D)$ is an $\aleph_1$\+directed colimit of
noncoassociative $R$\+coalgebras $(T,\nu_T)$ with countably presentable
underlying $R$\+modules~$T$.
 It is easy to see that any such $R$\+coalgebra $(T,\nu_T)$ is
an $\aleph_1$\+presentable object of the category of noncoassociative
$R$\+coalgebras~\cite[Proposition~4.2]{Pacc}.
 Let $(C,\mu_C)$ be a coassociative $R$\+coalgebra.
 How does one approach $C$ with coassociative $R$\+coalgebras
$(U,\mu_U)$ with countably presentable underlying $R$\+modules~$U$\,?

 Suppose given a coassociative $R$\+coalgebra $(C,\mu_C)$,
a noncoassociative $R$\+coalgebra $(S,\nu_S)$ with a countably
presentable underlying $R$\+module $S$, and an $R$\+coalgebra
morphism $S\rarrow C$.
 Let us explain, following~\cite[proof of Proposition~3.2]{Pacc},
how to construct a factorization of the morphism $S\rarrow C$ through
a coassociative $R$\+coalgebra $(U,\mu_U)$ with a countably presentable
underlying $R$\+module~$U$.

 Let $C=\varinjlim_{\xi\in\Xi}T_\xi$ be a representation of
the $R$\+coalgebra $C$ as a colimit of noncoassociative $R$\+coalgebras
$(T_\xi,\nu_{T_\xi})$, with countably presentable underlying
$R$\+modules $T_\xi$, indexed by an $\aleph_1$\+directed poset~$\Xi$.
 Then the $R$\+coalgebra map $(S,\nu_S)\rarrow(C,\mu_C)$ factorizes
through the $R$\+coalgebra map $(T_{\xi_0},\nu_{T_{\xi_0}})\rarrow
(C,\mu_C)$ for some $\xi_0\in\Xi$.

 Now the two compositions
$$
 \xymatrix{
  {T_{\xi_0}} \ar[r]
  & {T_{\xi_0}\ot_R T_{\xi_0}} \ar@<2pt>[r] \ar@<-2pt>[r]
  & {T_{\xi_0}\ot_R T_{\xi_0}\ot_R T_{\xi_0}}
 }
$$
\emph{need not} be equal to each other, as the coalgebra $T_{\xi_0}$
is not coassociative.
 However, the two compositions
$$
 \xymatrix{
  {T_{\xi_0}} \ar[r]
  & {T_{\xi_0}\ot_R T_{\xi_0}} \ar@<2pt>[r] \ar@<-2pt>[r]
  & {T_{\xi_0}\ot_R T_{\xi_0}\ot_R T_{\xi_0}} \ar[r]
  & {C\ot_R C\ot_RC}
 }
$$
\emph{are} equal to each other, as the coalgebra $C$ is coassociative
and $T_{\xi_0}\rarrow C$ is an $R$\+coalgebra morphism.
 Since the $R$\+module $T_{\xi_0}$ is countably presentable, the poset
$\Xi$ is $\aleph_1$\+directed, and $C\ot_RC\ot_RC=\varinjlim_{\xi\in\Xi}
T_\xi\ot_RT_\xi\ot_RT_\xi$, we can find an index $\xi_1\ge\xi_0$ in
$\Xi$ such that the two compositions
$$
 \xymatrix{
  {T_{\xi_0}} \ar[r]
  & {T_{\xi_0}\ot_R T_{\xi_0}} \ar@<2pt>[r] \ar@<-2pt>[r]
  & {T_{\xi_0}\ot_R T_{\xi_0}\ot_R T_{\xi_0}} \ar[r]
  & {T_{\xi_1}\ot_R T_{\xi_1}\ot_R T_{\xi_1}}
 }
$$
are equal to each other.

 Proceeding in this way, we construct an $\aleph_0$\+indexed chain
of indices $\xi_0\le\xi_1\le\xi_2\le\dotsb$ in $\Xi$ such that,
for every $i\ge0$, the two compositions
$$
 \xymatrix{
  {T_{\xi_i}} \ar[r]
  & {T_{\xi_i}\ot_R T_{\xi_i}} \ar@<2pt>[r] \ar@<-2pt>[r]
  & {T_{\xi_i}\ot_R T_{\xi_i}\ot_R T_{\xi_i}} \ar[r]
  & {T_{\xi_{i+1}}\ot_R T_{\xi_{i+1}}\ot_R T_{\xi_{i+1}}}
 }
$$
are equal to each other.
 It remains to put $U=\varinjlim_{i\in\aleph_0}T_{\xi_i}$ and
$\mu_U=\varinjlim_{i\in\aleph_0}\nu_{T_{\xi_i}}$.
\end{rem}

\Section{Flat Coalgebras over a Commutative Ring}

 All the results of the previous Section~\ref{coalgebras-secn}, with
the exception of Lemma~\ref{coalgebras-cocomplete}, remain valid for
the full subcategories in the respective coalgebra categories
consisting of the coalgebras that are \emph{flat as $R$\+modules}.
 The only difference is that the local $\aleph_1$\+presentability
claims need to be replaced by the $\aleph_1$\+accessibility.
 Let us spell out some details.
 
 We denote by $R\Coalg_\fl\subset R\Coalg$ and $R\Cocom_\fl\subset
R\Cocom$ the full subcategories of coalgebras that are flat as
$R$\+modules.
 So $R\Coalg_\fl$ is the category of $R$\+flat coassociative,
counital $R$\+coalgebras and $R\Cocom_\fl\subset R\Coalg_\fl$ is
the full subcategory of $R$\+flat cocommutative $R$\+coalgebras.

\begin{thm} \label{flat-coalgebras-accessible-theorem}
 Let $R$ be a commutative ring. \par
\textup{(a)} The category of coassociative, counital $R$\+flat
$R$\+coalgebras $R\Coalg_\fl$ is\/ $\aleph_1$\+ac\-ces\-si\-ble.
 The\/ $\aleph_1$\+presentable objects of $R\Coalg_\fl$ are precisely
all the $R$\+flat $R$\+co\-al\-ge\-bras whose underlying $R$\+modules
are countably presentable as objects of $R\Modl$. \par
\textup{(b)} The category of coassociative, cocommutative, counital
$R$\+flat $R$\+coalgebras $R\Cocom_\fl$ is\/ $\aleph_1$\+accessible.
 The\/ $\aleph_1$\+presentable objects of $R\Cocom_\fl$ are precisely
all the $R$\+flat cocommutative $R$\+coalgebras whose underlying
$R$\+modules are countably presentable as objects of $R\Modl$.
\end{thm}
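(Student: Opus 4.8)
The plan is to mirror the proof of Theorem~\ref{coalgebras-locally-presentable-theorem} step by step, replacing the locally $\aleph_1$\+presentable category $R\Modl$ by the merely $\aleph_1$\+accessible category $R\Modl_\fl$ of flat left $R$\+modules. The key observation that makes this substitution legitimate is that the tensor product over $R$ of two flat $R$\+modules is again flat, so the functors $C\longmapsto C\ot_RC$ and $C\longmapsto C\ot_RC\ot_RC$ restrict to functors $R\Modl_\fl\rarrow R\Modl_\fl$ and $R\Modl_\fl\rarrow R\Modl_\fl$; likewise the unit object $R$ is flat over itself. Thus all the functors entering the inserter and equifier constructions are well-defined endofunctors (or functors into products) of the flat subcategory. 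I would begin by invoking Proposition~\ref{flat-modules-accessible} to record that $R\Modl_\fl$ is $\aleph_1$\+accessible, with $\aleph_1$\+presentable objects the flat $R$\+modules that are countably presentable in $R\Modl$.

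First I would verify the hypotheses of Theorem~\ref{inserter-theorem} for $\kappa=\aleph_1$, $\lambda=\aleph_0$, with $\sK=R\Modl_\fl$ and $\sL=R\Modl_\fl\times R\Modl_\fl$, taking $F(C)=(C,C)$ and $G(C)=(C\ot_RC,\,R)$. By Propositions~\ref{flat-modules-accessible} and~\ref{product-proposition} both $\sK$ and $\sL$ are $\aleph_1$\+accessible. The crucial points to check are that colimits of $\aleph_0$\+indexed chains exist in $R\Modl_\fl$ (they do, since a countable directed colimit of flat modules is flat), that $\ot_R$ preserves $\aleph_1$\+directed colimits and colimits of countable chains (it preserves all colimits in each argument, being a left adjoint), and that $F$ sends $\aleph_1$\+presentable objects to $\aleph_1$\+presentable objects. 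This last point is where a little care is needed: $F$ is essentially the diagonal, so it clearly preserves countable presentability. The resulting inserter category $\sD$ is the category of flat $R$\+modules equipped with arbitrary maps $\mu\:C\rarrow C\ot_RC$ and $\epsilon\:C\rarrow R$, and Theorem~\ref{inserter-theorem} yields that $\sD$ is $\aleph_1$\+accessible with the expected $\aleph_1$\+presentable objects.

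Next I would apply Theorem~\ref{equifier-theorem} with $\sK=\sD$ and $\sL=(R\Modl_\fl)^4$ (or $(R\Modl_\fl)^3$ for part~(a), dropping the cocommutativity component), using the functors $F$, $G$ and the natural transformations $\phi$, $\psi$ exactly as in the proof of Theorem~\ref{comonoids-accessible-theorem}. The equifier $\sE$ then imposes coassociativity, counitality, and (for part~(b)) cocommutativity, so that $\sE=R\Cocom_\fl$, respectively $\sE=R\Coalg_\fl$. Again the main thing to confirm is that $F$, viewed now as the diagonal-type functor into the fourfold product, takes $\aleph_1$\+presentable objects of $\sD$ to $\aleph_1$\+presentable objects of $\sL$; this holds because $\aleph_1$\+presentable objects of $\sD$ have countably presentable underlying flat modules. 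Theorem~\ref{equifier-theorem} then delivers both the $\aleph_1$\+accessibility of $\sE$ and the description of its $\aleph_1$\+presentable objects as those flat coalgebras whose underlying $R$\+modules are countably presentable.

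The main obstacle I anticipate is not the abstract machinery but the honest verification that the functors $F$ and $G$ preserve the relevant classes of colimits \emph{within} $R\Modl_\fl$ rather than within $R\Modl$; in particular, one must know that the countable-chain colimits and $\aleph_1$\+directed colimits computed in $R\Modl_\fl$ agree with those computed in $R\Modl$ (so that the tensor-product computation $C\ot_RC=\varinjlim T_\xi\ot_RT_\xi$ remains valid), and that the $\aleph_1$\+presentability described by Proposition~\ref{flat-modules-accessible} is consistent with the outputs of the two theorems. Since the inclusion $R\Modl_\fl\hookrightarrow R\Modl$ preserves countable directed colimits, these compatibilities hold, and the rest of the argument is a routine transcription of the proof of Theorem~\ref{coalgebras-locally-presentable-theorem}. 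I would close by remarking, as in Remark~\ref{other-coalgebras-locally-presentable-remark}, that the same substitution applies to the noncounital, noncoassociative, Lie, conilpotent, and DG variants, since in each case the relevant tensor and exterior power functors preserve flatness.
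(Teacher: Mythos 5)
Your proposal is correct and follows essentially the same route as the paper: the same inserter-then-equifier argument with $\sK=R\Modl_\fl$, the same functors $F$, $G$ and natural transformations $\phi$, $\psi$, and the same appeals to Propositions~\ref{flat-modules-accessible} and~\ref{product-proposition}. The only (harmless) deviation is that you take the target categories $\sL$ to be products of $R\Modl_\fl$ rather than of $R\Modl$ as in the paper's proof; this is legitimate since tensor products of flat modules are flat and $R$ is flat, and it amounts to the paper's explicitly mentioned alternative of applying Theorem~\ref{comonoids-accessible-theorem} to the monoidal category $\sM=R\Modl_\fl$.
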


\begin{proof}
 The argument is similar to the proof of
Theorem~\ref{coalgebras-locally-presentable-theorem}, with suitable
changes.
 Let us spell out the proof of part~(b).

 First we apply Theorem~\ref{inserter-theorem} (for $\kappa=\aleph_1$
and $\lambda=\aleph_0$).
 Put $\sK=R\Modl_\fl$ and $\sL=R\Modl\times R\Modl$.
 Let $F\:\sK\rarrow\sL$ be the functor taking a flat $R$\+module $C$ to
the pair of $R$\+modules $(C,C)\in R\Modl\times R\Modl$, and let
$G\:\sK\rarrow\sL$ be the functor taking the $R$\+module $C$ to
the pair of $R$\+modules $(C\ot_RC,\>R)\in R\Modl\times R\Modl$.
 Then the inserter category $\sD$ is the category of flat $R$\+modules
$C$ endowed with two $R$\+linear maps $\mu\:C\rarrow C\ot_RC$ and
$\epsilon\:C\rarrow R$.

 By Propositions~\ref{modules-locally-presentable}\+-%
\ref{flat-modules-accessible} and~\ref{product-proposition}, both
the categories $\sK$ and $\sL$ are $\aleph_1$\+accessible.
 The assumptions of Theorem~\ref{inserter-theorem} are satisfied, so
we can conclude that the category $\sD$ is $\aleph_1$\+accessible.
 We also obtain a description of the full subcategory of
$\aleph_1$\+presentable objects in~$\sD$.

 Now we apply Theorem~\ref{equifier-theorem} (for $\kappa=\aleph_1$
and $\lambda=\aleph_0$).
 Put $\sK=\sD$ and $\sL=(R\Modl)^4=R\Modl\times R\Modl\times R\Modl
\times R\Modl$.
 Let $F\:\sK\rarrow\sL$ be the functor taking a triple $(C,\mu,\epsilon)
\in\sD$ to the quadruple of $R$\+modules $(C,C,C,C)\in\sL$,
and let $G\:\sK\rarrow\sL$ be the functor taking the same object
$(C,\mu,\epsilon)\in\sD$ to the quadruple of $R$\+modules
$(C\ot_RC\ot_RC,\>C,\>C,\>C\ot_RC)\in\sL$.
 The natural transformations $\phi$ and~$\psi$ are the same as in
the proof of Theorem~\ref{comonoids-accessible-theorem}.

 Then the equifier category $\sE$ is the category of $R$\+flat,
coassociative, counital, cocommutative coalgebras $C$ over~$R$,
that is $\sE=R\Cocom_\fl$.
 The assumptions of Theorem~\ref{equifier-theorem} are satisfied,
and we can conclude that the category $\sE=R\Cocom_\fl$ is
$\aleph_1$\+accessible.
 Theorem~\ref{equifier-theorem} also provides the desired description
of the full subcategory of $\aleph_1$\+presentable objects in~$\sE$.

 The same argument is applicable in the case of noncocommutative
coalgebras (part~(a) of the theorem).
 One just needs to drop the elements related to cocommutativity in
the proof above.

 Alternatively, one can argue similarly to the proof of
Theorem~\ref{bimodule-flat-corings-accessible-theorem} below,
using Theorem~\ref{coalgebras-locally-presentable-theorem} as
a black box and applying Theorem~\ref{pseudopullback-theorem}.
 Another alternative approach is to apply
Theorem~\ref{comonoids-accessible-theorem} to
the monoidal category of flat $R$\+modules $\sM=R\Modl_\fl$ with
respect to the operation of tensor product $\ot=\ot_R$.
\end{proof}

\begin{rem} \label{other-flat-coalgebras-accessible-remark}
 Similarly to Remark~\ref{other-coalgebras-locally-presentable-remark},
numerous variations on the theme of
Theorem~\ref{flat-coalgebras-accessible-theorem} are possible,
producing other examples of $\aleph_1$\+accessible categories of
$R$\+flat $R$\+coalgebras.
 Let us mention some of them.

\smallskip
 (1)~Noncounital and/or noncoassociative $R$\+flat $R$\+coalgebras also
form $\aleph_1$\+ac\-ces\-si\-ble categories.
 All one needs to do in these cases is to drop the related elements
from the proof of Theorem~\ref{flat-coalgebras-accessible-theorem},
or refer to Remark~\ref{other-comonoids-accessible-remark}.

\smallskip
 (2)~The category of $R$\+flat Lie coalgebras over an arbitrary
commutative ring $R$ is $\aleph_1$\+accessible.
 This is similar to
Remark~\ref{other-coalgebras-locally-presentable-remark}(2).

\smallskip
 (3)~The category of $R$\+flat conilpotent coalgebras over any
commutative ring $R$ is $\aleph_1$\+accessible.
 This is similar to
Remark~\ref{other-coalgebras-locally-presentable-remark}(3).

\smallskip
 (4)~Concerning flat DG\+coalgebras, there are two natural versions of
such a concept.
 One can consider DG\+coalgebras $C^\bu$ over $R$ whose underlying
complex of $R$\+modules is \emph{termwise} flat (i.~e., every
$R$\+module $C^n$ if flat, for $n\in\boZ$), or one can consider
the DG\+coalgebras whose underlying complex $C^\bu$ is a \emph{homotopy
flat} complex of flat $R$\+modules.

 In the former context, one needs to know that the category of complexes
of flat $R$\+modules is $\aleph_1$\+accessible, and the complexes of
flat countably presentable $R$\+modules are precisely all
the $\aleph_1$\+presentable objects of this
category~\cite[Corollaries~10.3\+-10.4]{Pacc},
\cite[Proposition~2.4]{PS6}.

 In the latter context, one needs the use the result that the category
of homotopy flat complexes of flat $R$\+modules is
$\aleph_1$\+accessible, and the homotopy flat complexes of countably
presentable flat $R$\+modules are precisely all
the $\aleph_1$\+presentable objects of this category.
 This is based on~\cite[Theorem~1.1]{CH};
see~\cite[Proposition~2.7]{PS6}.

 With these preliminary observations in mind, the arguments are similar
to the proof of Theorem~\ref{flat-coalgebras-accessible-theorem}.
 One just needs to replace the category of flat $R$\+modules
$R\Modl_\fl$ by the category of complexes of flat $R$\+modules or
homotopy flat complexes of flat $R$\+modules, as desired (and replace
all mentions of the category of $R$\+modules $R\Modl$ by the category
of complexes of $R$\+modules).
\end{rem}

\Section{Corings over an Associative Ring}

 Let $R$ be an associative ring (which may or may not be commutative).
 A (\emph{coassociative, counital}) \emph{coring} over $R$ is
a comonoid object in the monoidal category of $R$\+$R$\+bimodules
$R\Bimod R$ with respect to the operation of tensor product over~$R$.
 In other words, a coring $C$ is an $R$\+$R$\+bimodule endowed with
two maps of \emph{comultiplication} $\mu\:C\rarrow C\ot_RC$ and
\emph{counit} $\epsilon\:C\rarrow R$, which must be $R$\+$R$\+bimodule
maps satisfying the coassociativity and counitality axioms.

 We suppress the explicit diagrams and equations, as they are written
down exactly the same as in the definition of a coassociative,
counital $R$\+coalgebra in Section~\ref{coalgebras-secn}.
 The difference between the two definitions is that $C$ was
an $R$\+module (for a commutative ring~$R$) in
Section~\ref{coalgebras-secn}, while $C$ is an $R$\+$R$\+bimodule
(for an associative ring~$R$) in the present section.

 Assume further that $R$ is an associative, unital algebra over
a fixed commutative ring~$k$.
 An \emph{$R$\+$R$\+bimodule over~$k$} is an $R$\+$R$\+bimodule in
which the left and right actions of~$k$ agree.
 If a coring $C$ over $R$ is an $R$\+$R$\+bimodule over~$k$, we will
say that $C$ is an \emph{$R$\+coring over~$k$} (then it follows
automatically that the comultiplication and counit $\mu$ and~$\epsilon$
are $k$\+linear maps).
 From now on, we will assume that all our $R$\+$R$\+bimodules are
$R$\+$R$\+bimodules over~$k$.
 The category of $R$\+$R$\+bimodules over~$k$ will be denoted by
$R_k\Bimod{}_kR=(R\ot_kR^\rop)\Modl$.

 Morphisms of $R$\+corings over~$k$ are defined in the obvious way
as $R$\+$R$\+bimodule maps compatible with the comultiplication
and counit.
 We denote the category of $R$\+corings over~$k$ by $R_k\Corings$.

\begin{lem} \label{corings-cocomplete}
 All colimits exist in the category $R_k\Corings$.
 The forgetful functor $R_k\Corings\rarrow R_k\Bimod{}_kR$ preserves
colimits.
\end{lem}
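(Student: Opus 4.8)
The plan is to mirror the proof of Lemma~\ref{coalgebras-cocomplete} essentially verbatim, replacing $R$\+modules throughout by $R$\+$R$\+bimodules over~$k$ and replacing the tensor product over the commutative ring by the tensor product $\ot_R$ of bimodules over the associative ring~$R$. The earlier proof used only two abstract features of the monoidal category $(R\Modl,\ot_R)$: that the underlying category is abelian, so that images and quotient objects are available, and that the tensor product preserves colimits in each argument (it has the right adjoint $\Hom$). Both features persist here, since the underlying category $R_k\Bimod{}_kR=(R\ot_kR^\rop)\Modl$ is a module category over the ring $R\ot_kR^\rop$, hence abelian, and the bimodule tensor product $\ot_R$ is right exact and preserves direct sums in each variable. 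As in the proof of Lemma~\ref{coalgebras-cocomplete}, I would first reduce to coproducts and coequalizers, since every colimit is built from these, and a forgetful functor preserves all colimits as soon as it preserves coproducts and coequalizers.

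For coproducts, given a family $(C_\xi)_{\xi\in\Xi}$ of $R$\+corings over~$k$, I would equip the direct sum $\bigoplus_{\xi}C_\xi$ formed in $R_k\Bimod{}_kR$ with a coring structure: the comultiplication is the composite $\bigoplus_{\xi}C_\xi\rarrow\bigoplus_{\xi}(C_\xi\ot_RC_\xi)\hookrightarrow(\bigoplus_{\xi}C_\xi)\ot_R(\bigoplus_{\xi}C_\xi)$, using that $\ot_R$ distributes over direct sums, and the counit is the map $\bigoplus_{\xi}C_\xi\rarrow R$ assembled from the individual counits $\epsilon_\xi\:C_\xi\rarrow R$. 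One checks in the standard way that the resulting coring is the coproduct in $R_k\Corings$ and that the forgetful functor preserves it.

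For coequalizers, given a pair $f$, $g\:C\rightrightarrows D$ of $R$\+coring morphisms, I would form the image $I$ of the bimodule map $f-g\:C\rarrow D$ and verify that $I$ is a coideal in~$D$. The computation $\epsilon_D(f(c)-g(c))=\epsilon_C(c)-\epsilon_C(c)=0$ shows that the counit vanishes on~$I$, and writing $\mu_C(c)=\sum_i c'_i\ot c''_i$ in Sweedler notation, the identity $\mu_D(f(c)-g(c))=\sum_i(f(c'_i)-g(c'_i))\ot f(c''_i)+\sum_i g(c'_i)\ot(f(c''_i)-g(c''_i))$ places the image of $I$ under $\mu_D$ inside $\im(I\ot_RD\oplus D\ot_RI)\subset D\ot_RD$. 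Then the quotient bimodule $E=D/I$ inherits a unique coring structure for which the projection $p\:D\rarrow E$ is a coring morphism, and $p$ is the coequalizer of $f$ and~$g$ in $R_k\Corings$, computed on underlying bimodules.

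I expect no genuine obstacle: once the two monoidal features above are noted, the argument is purely formal and identical to that of Lemma~\ref{coalgebras-cocomplete}. The one point requiring a grain of care --- and the only place where the passage from modules to bimodules could in principle cause trouble --- is that the image, the quotient, and the tensor products $I\ot_RD$ and $D\ot_RI$ are all now formed in the bimodule category, so one must invoke the right-exactness of $\ot_R$ (rather than any outright exactness) to ensure that the coideal computation lands where claimed and that $p\ot p$ has image all of $E\ot_RE$. Since $\ot_R$ is right exact on bimodules, this goes through exactly as before.
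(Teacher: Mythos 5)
Your proposal is correct and coincides with the paper's own proof: the paper disposes of this lemma in one line, saying it is ``similar to the proof of Lemma~\ref{coalgebras-cocomplete}, with $R$-module maps replaced by $R$-$R$-bimodule maps,'' which is exactly the translation you carry out. Your spelled-out details (coproducts via direct sums, coequalizers via the coideal $I=\im(f-g)$, and the observation that only right-exactness and colimit-preservation of $\ot_R$ are needed) faithfully reproduce that argument in the bimodule setting.
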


\begin{proof}
 Similar to the proof of Lemma~\ref{coalgebras-cocomplete}, with
$R$\+module maps replaced by $R$\+$R$\+bi\-mod\-ule maps.
\end{proof}

\begin{thm} \label{corings-locally-presentable-theorem}
 Let $k$~be a commutative ring and $R$ be an associative, unital
$k$\+algebra.
 Then the category $R_k\Corings$ of coassociative, counital
$R$\+corings over~$k$ is locally\/ $\aleph_1$\+presentable.
 The\/ $\aleph_1$\+presentable objects of $R_k\Corings$ are precisely
all the $R$\+corings whose underlying $R$\+$R$\+bimodules are countably
presentable as objects of $R_k\Bimod{}_kR=(R\ot_kR^\rop)\Modl$.
\end{thm}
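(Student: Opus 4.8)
The plan is to mirror the proof of Theorem~\ref{coalgebras-locally-presentable-theorem}, replacing the category of $R$\+modules over a commutative ring by the category $R_k\Bimod{}_kR=(R\ot_kR^\rop)\Modl$ of $R$\+$R$\+bimodules over~$k$, and the tensor product over a commutative ring by the tensor product $\ot_R$ of bimodules over the associative ring~$R$. The one conceptual point to keep in mind is that the ambient abelian category is the module category over the associative ring $R\ot_kR^\rop$, so it is locally\/ $\aleph_1$\+presentable by Proposition~\ref{modules-locally-presentable} (and its countably presentable objects are as described there); whereas the monoidal structure used to define corings is the tensor product $\ot_R$ \emph{over~$R$}, with unit object~$R$, which is a different operation from the tensor product over $R\ot_kR^\rop$.

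The key functorial input is that $\ot_R$ on $R$\+$R$\+bimodules over~$k$ preserves all colimits in each of its two arguments, since $C\ot_R{-}$ and ${-}\ot_RC$ are left adjoints (the corresponding internal $\Hom_R$ being their right adjoints). As in the computation spelled out for coalgebras, it follows that the tensor-square functor $C\longmapsto C\ot_RC$ and the tensor-cube functor $C\longmapsto C\ot_RC\ot_RC$ preserve $\aleph_1$\+directed colimits (in fact all filtered colimits): one writes $C\ot_RC=\varinjlim_{(\alpha,\beta)}T_\alpha\ot_RT_\beta$ and restricts along the cofinal diagonal, and similarly for the triple product. These functors also preserve colimits of $\aleph_0$\+indexed chains.

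First I would apply Theorem~\ref{inserter-theorem} (with $\kappa=\aleph_1$, $\lambda=\aleph_0$) to $\sK=R_k\Bimod{}_kR$ and $\sL=\sK\times\sK$, with $F(C)=(C,C)$ and $G(C)=(C\ot_RC,\>R)$, obtaining the inserter category\/~$\sD$ of bimodules $C$ equipped with arbitrary bimodule maps $\mu\:C\rarrow C\ot_RC$ and $\epsilon\:C\rarrow R$; by Propositions~\ref{modules-locally-presentable} and~\ref{product-proposition} together with the preservation properties above, the hypotheses hold, so $\sD$ is\/ $\aleph_1$\+accessible with the expected description of presentable objects. Next I would apply Theorem~\ref{equifier-theorem} to $\sK=\sD$ and $\sL=(R_k\Bimod{}_kR)^3$, with the two parallel natural transformations encoding the coassociativity and the two counitality equations (the same $\phi$,~$\psi$ as in the proof of Theorem~\ref{comonoids-accessible-theorem}, with the cocommutativity components~$\phi_4$,~$\psi_4$ dropped), whose equifier is exactly $\sE=R_k\Corings$. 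This yields\/ $\aleph_1$\+accessibility of $R_k\Corings$ and identifies its\/ $\aleph_1$\+presentable objects as the corings whose underlying bimodule is countably presentable in $R_k\Bimod{}_kR$. Finally, Lemma~\ref{corings-cocomplete} supplies all colimits, upgrading\/ $\aleph_1$\+accessible to locally\/ $\aleph_1$\+presentable.

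I do not anticipate any genuine obstacle: there is no new mathematical content beyond the commutative-ring case, only the bookkeeping of passing from modules to bimodules over~$k$. The mildest point requiring care is the one flagged above, namely keeping the two tensor products distinct and checking the hypotheses for the functor~$G$ (preservation of\/ $\aleph_1$\+directed colimits by the tensor powers); note that Theorems~\ref{inserter-theorem} and~\ref{equifier-theorem} do \emph{not} require $G$ to preserve colimits of\/ $\aleph_0$\+indexed chains, which is convenient here since the tensor powers of $R$ itself need not behave well. Alternatively, and more briefly, the entire accessibility statement follows at once from Theorem~\ref{comonoids-accessible-theorem}(a) applied to the (non-symmetric) monoidal category $\sM=R_k\Bimod{}_kR$ with monoidal operation $\ot=\ot_R$, combined again with Lemma~\ref{corings-cocomplete}.
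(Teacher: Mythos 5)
Your proposal is correct and follows essentially the same route as the paper's own proof: the inserter theorem (with $\sK=R_k\Bimod{}_kR$, $F(C)=(C,C)$, $G(C)=(C\ot_RC,\>R)$) to get the category of bimodules with arbitrary $\mu$, $\epsilon$, then the equifier theorem over $\sL=(R_k\Bimod{}_kR)^3$ to impose coassociativity and counitality, then Lemma~\ref{corings-cocomplete} to upgrade $\aleph_1$\+accessibility to local $\aleph_1$\+presentability, with the same alternative via Theorem~\ref{comonoids-accessible-theorem}(a) that the paper also records. One small correction to your closing remark: the asymmetry in the hypotheses that actually gets used here is that $G$ need not take $\aleph_1$\+presentable objects to $\aleph_1$\+presentable objects (and indeed it does not, since $R$, and likewise $C\ot_RC$ for countably presentable $C$, is in general not countably presentable over $R\ot_kR^\rop$), whereas $G$ \emph{does} preserve colimits of $\aleph_0$\+indexed chains in this situation (tensor powers preserve filtered colimits, and the constant component $R$ preserves connected colimits), so the chain-colimit weakening you invoke is not the relevant one.
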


\begin{proof}
 This is a particular case of~\cite[Example~4.10]{Ulm}.
 Our argument is similar to the proof of
Theorem~\ref{coalgebras-locally-presentable-theorem}(a).
 In view of Lemma~\ref{corings-cocomplete}, it suffices to show that
the category $R_k\Corings$ is $\aleph_1$\+accessible and describe its
full subcategory of $\aleph_1$\+presentable objects.

 Firstly we apply Theorem~\ref{inserter-theorem} (for $\kappa=\aleph_1$
and $\lambda=\aleph_0$).
 Put $\sK=R_k\Bimod{}_kR$ and $\sL=(R_k\Bimod{}_kR)\times
(R_k\Bimod{}_kR)$.
 Let $F\:\sK\rarrow\sL$ be the functor taking an $R$\+$R$\+bimodule $C$
to the pair of $R$\+$R$\+bimodules $(C,C)\in R_k\Bimod{}_kR\times
R_k\Bimod{}_kR$, and let $G\:\sK\rarrow\sL$ be the functor taking
the $R$\+$R$\+bimodule $C$ to the pair of $R$\+$R$\+bimodules
$(C\ot_RC,\>R)\in R_k\Bimod{}_kR\times R_k\Bimod{}_kR$.
 Then the inserter category $\sD$ is the category of $R$\+$R$\+bimodules
$C$ over~$k$ endowed with two $R$\+$R$\+bimodule maps
$\mu\:C\rarrow C\ot_RC$ and $\epsilon\:C\rarrow R$.

 By Proposition~\ref{modules-locally-presentable} (for the ring
$R\ot_kR^\rop$) and Proposition~\ref{product-proposition}, both
the categories $\sK$ and $\sL$ are $\aleph_1$\+accessible (in fact,
locally $\aleph_1$\+presentable).
 The assumptions of Theorem~\ref{inserter-theorem} are satisfied,
and we can conclude that the category $\sD$ is $\aleph_1$\+accessible.
 Notice that, in the assumptions of Theorem~\ref{inserter-theorem},
the functor $G$ \emph{need not} take $\kappa$\+presentable objects to
$\kappa$\+presentable objects; and in the situation at hand it doesn't.
 The functor $F$ \emph{must} take $\kappa$\+presentable objects to
$\kappa$\+presentable objects; and in the situation at hand it does.
 Theorem~\ref{inserter-theorem} also provides a description of the full
subcategory of $\aleph_1$\+presentable objects in~$\sD$.

 Secondly we apply Theorem~\ref{equifier-theorem}, continuing to argue
exactly as in the proof of
Theorem~\ref{coalgebras-locally-presentable-theorem}(a) with
the category $R\Modl$ replaced by $R_k\Bimod{}_kR$ everywhere.
 So we put $\sK=\sD$ and $\sL=(R_k\Bimod{}_kR)^3$, etc.
 Once again, for applicability of
Theorem~\ref{equifier-theorem}, the functor $F$ has to take
$\aleph_1$\+presentable objects to $\aleph_1$\+presentable objects
(and it does), while the functor $G$ need not have this property
(and doesn't).
 So the assumptions of Theorem~\ref{equifier-theorem} are satisfied,
and we conclude that the category $\sE=R_k\Corings$ is
$\aleph_1$\+accessible.
 Theorem~\ref{equifier-theorem} also provides the desired description
of the full subcategory of $\aleph_1$\+presentable objects in~$\sE$.

 Alternatively, the assertion about $\aleph_1$\+accessibility of
the category $R_k\Corings$ together with the description
of $\aleph_1$\+presentable objects in this category can be obtained
as a particular case of Theorem~\ref{comonoids-accessible-theorem}(a)
for the monoidal category $\sM=R_k\Bimod{}_kR$ with respect to
the operation of tensor product $\ot=\ot_R$.
\end{proof}

\begin{rem} \label{other-corings-locally-presentable-remark}
 Similarly to Remarks~\ref{other-comonoids-accessible-remark}
and~\ref{other-coalgebras-locally-presentable-remark},
one can think of some variations on the theme of
Theorem~\ref{corings-locally-presentable-theorem}, producing locally
$\aleph_1$\+presentable categories of corings with the simplest
expected description of the full subcategories of
$\aleph_1$\+presentable objects by virtue of essentially
the same argument.
 This applies to noncoassociative and/or noncounital $R$\+corings,
DG\+corings over~$R$, etc.
\end{rem}

\Section{Bimodule-Flat Corings} \label{bimodule-flat-secn}

 Let $k$~be a commutative ring and $R$ be an associative, unital
$k$\+algebra.
 As in the previous section, we consider $R$\+corings $C$ over~$k$,
i.~e., $R$\+$R$\+bimodules over~$k$ endowed with a coassociative,
counital coring structure.

 We will say that an $R$\+coring $C$ is \emph{$R$\+$R$\+bimodule
flat} (or ``bimodule flat'' for brevity) if $C$ is flat as a module
over the ring $R\ot_k R^\rop$.
 Let us denote the category of $R$\+$R$\+bimodule flat corings over~$k$
by $R_k\Corings_\bifl$.

\begin{thm} \label{bimodule-flat-corings-accessible-theorem}
 Let $k$~be a commutative ring and $R$ be an associative, unital
$k$\+algebra.
 Then the category $R_k\Corings_\bifl$ of $R$\+$R$\+bimodule flat
coassociative, counital $R$\+corings over~$k$ is\/
$\aleph_1$\+accessible.
 The\/ $\aleph_1$\+presentable objects of $R_k\Corings_\bifl$ are
precisely all the bimodule flat $R$\+corings whose underlying
$R$\+$R$\+bimodules are countably presentable as objects of
$R_k\Bimod{}_kR=(R\ot_kR^\rop)\Modl$.
\end{thm}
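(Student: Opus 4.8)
The plan is to realize $R_k\Corings_\bifl$ as a pseudopullback and invoke Theorem~\ref{pseudopullback-theorem}, using Theorem~\ref{corings-locally-presentable-theorem} as a black box. Write $S=R\ot_kR^\rop$, so that $R_k\Bimod{}_kR=S\Modl$ and an $R$\+coring is bimodule flat precisely when its underlying $S$\+module is flat. Set $\sK_1=R_k\Corings$, which is locally\/ $\aleph_1$\+presentable by Theorem~\ref{corings-locally-presentable-theorem}; set $\sK_2=S\Modl_\fl$, which is\/ $\aleph_1$\+accessible by Proposition~\ref{flat-modules-accessible}; and set $\sL=S\Modl$, which is locally\/ $\aleph_1$\+presentable by Proposition~\ref{modules-locally-presentable}. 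Let $F_1\:R_k\Corings\rarrow S\Modl$ be the forgetful functor and $F_2\:S\Modl_\fl\hookrightarrow S\Modl$ be the inclusion. I claim that the pseudopullback $\sC$ of $F_1$ and $F_2$ is equivalent to $R_k\Corings_\bifl$: an object of $\sC$ is a triple $(C,M,\theta)$ with $C$ a coring, $M$ a flat $S$\+module, and $\theta$ an isomorphism between the underlying $S$\+module of $C$ and $M$; transporting the flatness of $M$ along $\theta$ shows that $C$ is bimodule flat, and conversely every bimodule-flat coring $C$ gives the triple $(C,C,\id)$. The passage from the honest full subcategory to the pseudopullback, in which $\theta$ is only an isomorphism rather than an identity, is the usual harmless strictification and affects neither accessibility nor the description of presentable objects.

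Next I would verify the hypotheses of Theorem~\ref{pseudopullback-theorem} for $\kappa=\aleph_1$ and $\lambda=\aleph_0$. All three categories are\/ $\aleph_1$\+accessible and admit colimits of $\aleph_0$\+indexed chains (for $S\Modl$ and $S\Modl_\fl$ these are countable directed colimits, which preserve flatness; for $R_k\Corings$ they exist by Lemma~\ref{corings-cocomplete}). Both functors preserve\/ $\aleph_1$\+directed colimits and colimits of $\aleph_0$\+indexed chains: the inclusion $F_2$ preserves all directed colimits since a directed colimit of flat modules computed in $S\Modl$ is again flat, while $F_1$ preserves all colimits by Lemma~\ref{corings-cocomplete}. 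Finally, both functors send\/ $\aleph_1$\+presentable objects to\/ $\aleph_1$\+presentable objects: by Theorem~\ref{corings-locally-presentable-theorem} the\/ $\aleph_1$\+presentable corings are exactly those with countably presentable underlying $S$\+module, and by Proposition~\ref{flat-modules-accessible} the\/ $\aleph_1$\+presentable flat modules are exactly the flat modules that are\/ $\aleph_1$\+presentable in $S\Modl$; in either case $F_1$ and $F_2$ land among the\/ $\aleph_1$\+presentable objects of $\sL=S\Modl$.

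With these checks in place, Theorem~\ref{pseudopullback-theorem} yields that $\sC\simeq R_k\Corings_\bifl$ is\/ $\aleph_1$\+accessible, and that its\/ $\aleph_1$\+presentable objects are the triples $(S_1,S_2,\theta)$ with $S_1$ being\/ $\aleph_1$\+presentable in $R_k\Corings$ and $S_2$ being\/ $\aleph_1$\+presentable in $S\Modl_\fl$. Under the equivalence, an object $C\in R_k\Corings_\bifl$ is\/ $\aleph_1$\+presentable if and only if its underlying $S$\+module is simultaneously countably presentable (the condition on $S_1$) and a countably presentable flat module (the condition on $S_2$); since $C$ is already bimodule flat, both conditions collapse to the single requirement that the underlying $R$\+$R$\+bimodule be countably presentable in $(R\ot_kR^\rop)\Modl$, which is exactly the asserted description.

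The step I expect to require the most care is the preservation of\/ $\aleph_1$\+directed colimits by the forgetful functor $F_1\:R_k\Corings\rarrow S\Modl$, since colimits of corings are a priori subtle; this is precisely the content of Lemma~\ref{corings-cocomplete}, so in practice the work has already been done. As an alternative route, one could avoid the pseudopullback altogether and argue exactly as in the proof of Theorem~\ref{flat-coalgebras-accessible-theorem}, applying Theorem~\ref{inserter-theorem} and Theorem~\ref{equifier-theorem} directly with $\sK$ replaced by the\/ $\aleph_1$\+accessible category $S\Modl_\fl$ of flat $S$\+modules while keeping $\sL$ a finite power of $S\Modl$; here one uses that the functor $F$ (the diagonal) preserves\/ $\aleph_1$\+presentable objects and colimits of chains, whereas the functor $G$ (involving $C\mapsto C\ot_RC$) need only preserve\/ $\aleph_1$\+directed colimits, which it does because $\ot_R$ commutes with directed colimits.
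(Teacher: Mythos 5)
Your proposal is correct and is essentially identical to the paper's own proof: the paper realizes $R_k\Corings_\bifl$ as the pseudopullback of the same two functors $F_1\:R_k\Corings\rarrow(R\ot_kR^\rop)\Modl$ and $F_2\:(R\ot_kR^\rop)\Modl_\fl\rarrow(R\ot_kR^\rop)\Modl$, uses Theorem~\ref{corings-locally-presentable-theorem} as a black box together with Propositions~\ref{modules-locally-presentable} and~\ref{flat-modules-accessible}, and applies Theorem~\ref{pseudopullback-theorem} with $\kappa=\aleph_1$, $\lambda=\aleph_0$. Even your closing alternative (arguing directly via Theorems~\ref{inserter-theorem} and~\ref{equifier-theorem} over the category of flat modules, as in Theorem~\ref{flat-coalgebras-accessible-theorem}) matches the alternative the paper mentions in its opening sentence.
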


\begin{proof}
 One can argue similarly to the proofs of
Theorems~\ref{flat-coalgebras-accessible-theorem}(a) and
\ref{corings-locally-presentable-theorem}; but we prefer to
present a differently structured argument based on
Theorem~\ref{pseudopullback-theorem} and using
Theorem~\ref{corings-locally-presentable-theorem} as a black box.

 Put $\sK_1=R_k\Corings$, \ $\sK_2=(R\ot_k R^\rop)\Modl_\fl$, and
$\sL=R_k\Bimod{}_kR=(R\ot_k\nobreak R^\rop)\Modl$.
 Let $F_1\:\sK_1\rarrow\sL$ be the forgetful functor assigning to every
$R$\+coring $C$ over~$k$ its underlying $R$\+$R$\+bimodule $C$, and
let $F_2\:\sK_2\rarrow\sL$ be the identity inclusion functor.
 Then the pseudopullback category $\sC$ of the functors $F_1$ and
$F_2$ is equivalent to the desired category of bimodule flat
corings $R_k\Corings_\bifl$.

 Propositions~\ref{modules-locally-presentable}\+-%
\ref{flat-modules-accessible} and
Theorem~\ref{corings-locally-presentable-theorem} tell that
the categories $\sK_1$, $\sK_2$, and $\sL$ are $\aleph_1$\+accessible,
and describe their full subcategories of $\aleph_1$\+presentable
objects.
 Theorem~\ref{pseudopullback-theorem} (for $\kappa=\aleph_1$ and
$\lambda=\aleph_0$) is applicable to the pair of functors $F_1$
and~$F_2$.
 Theorem~\ref{pseudopullback-theorem} tells that the category $\sC$ is
$\aleph_1$\+accessible, and provides the desired description of its
full subcategory of $\aleph_1$\+presentable objects.
\end{proof}

 Notice that Theorem~\ref{bimodule-flat-corings-accessible-theorem}
is \emph{not} a particular case of
Theorem~\ref{comonoids-accessible-theorem}(a), because
$R_k\Bimod{}_kR$ is \emph{not} a monoidal category with respect to
the tensor product operation~$\ot_R$.
 The tensor product over $R$ of two flat modules over $R\ot_k R^\rop$
is not a flat module over $R\ot_k R^\rop$ in general, and
the $R$\+$R$\+bimodule~$R$ (the unit object of $R_k\Bimod{}_kR$) is
usually not a flat $R$\+$R$\+bimodule.

\begin{rem}
 Similarly to Remarks~\ref{other-comonoids-accessible-remark},
\ref{other-flat-coalgebras-accessible-remark}(1),
and~\ref{other-corings-locally-presentable-remark}, one can consider
the categories of noncoassociative and/or noncounital bimodule flat
corings.
 The obvious versions of
Theorem~\ref{bimodule-flat-corings-accessible-theorem} hold in
these contexts as well.
\end{rem}

\Section{Corings with Flat Kernel}

 As in the previous two sections, we consider $R$\+corings $C$
over~$k$, where $k$~is a commutative ring and $R$ is an associative,
unital $k$\+algebra.
 Inspired by~\cite[\S3]{BB} and~\cite[Definition~4.19 and
Theorem~4.20]{Kuel}, we say that an $R$\+coring $C$ over $k$
\emph{has flat kernel} if the following two conditions are satisfied:
\begin{enumerate}
\item the counit map $\epsilon\:C\rarrow R$ is surjective;
\item the kernel $\overline C$ of the counit map~$\epsilon$ is
a flat module over the ring $R\ot_kR^\rop$.
\end{enumerate}

 If a coring $C$ satisfies only condition~(1) but not necessarily~(2),
we will say that $C$ \emph{has surjective counit}.
 Let us denote the full subcategory of $R$\+corings with surjective
counits by $R_k\Corings_\sur\subset R_k\Corings$ and the full
subcategory of $R$\+corings with flat kernels by
$R_k\Corings_\obifl\subset R_k\Corings_\sur\subset R_k\Corings$.

 For the purposes of this section, we will need some further
preliminary material, continuing the discussion in
Section~\ref{prelim-secn}.
 Given a category $\sK$, let us denote by $\sK^\to$ the category of
morphisms in~$\sK$ (with commutative squares in $\sK$ as morphisms
in~$\sK^\to$).
 Denote by $\sK^\epi\subset\sK^\to$ the full subcategory in $\sK^\to$
whose objects are the epimorphisms in~$\sK$ (so the morphisms in
$\sK^\epi$ are the commutative squares in $\sK$ in which one pair
of morphisms are epimorphisms, while the other pair is formed by
arbitrary morphisms).

\begin{prop} \label{morphisms-of-modules}
 Let $S$ be an associative ring and $\kappa$~be a regular cardinal.
 Then the category $S\Modl^\to$ of morphisms of left $S$\+modules
is locally $\kappa$\+presentable.
 The $\kappa$\+presentable objects of $S\Modl^\to$ are precisely all
the morphisms of $\kappa$\+presentable left $S$\+modules. \qed
\end{prop}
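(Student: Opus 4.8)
Proposition \ref{morphisms-of-modules} asserts that the category $S\Modl^\to$ of morphisms of left $S$-modules is locally $\kappa$-presentable, with $\kappa$-presentable objects being exactly the morphisms between $\kappa$-presentable modules. Let me sketch a proof.

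The plan is to treat $S\Modl^\to$ as the functor category $[\mathbf 2, S\Modl]$, where $\mathbf 2=\{0\to 1\}$ is the category with a single nonidentity arrow, and to reduce everything to Proposition~\ref{modules-locally-presentable}. First I would record that $S\Modl^\to$ is locally $\kappa$\+presentable. One clean way is to invoke the general fact that, for any small category $\mathsf J$ and any locally $\kappa$\+presentable category $\sC$, the diagram category $\sC^{\mathsf J}$ is again locally $\kappa$\+presentable \cite{AR}; here $\sC=S\Modl$ is locally $\kappa$\+presentable by Proposition~\ref{modules-locally-presentable} and $\mathsf J=\mathbf 2$. (Equivalently, $S\Modl^\to$ is the module category $T\Modl$ over the triangular matrix ring $T=\bigl(\begin{smallmatrix} S & S \\ 0 & S\end{smallmatrix}\bigr)$, to which Proposition~\ref{modules-locally-presentable} applies directly.) Either way, colimits in $S\Modl^\to$ are computed termwise, so the two evaluation functors $\mathrm{ev}_0,\mathrm{ev}_1\:S\Modl^\to\rarrow S\Modl$, sending $(f\:X_0\to X_1)$ to $X_0$ and to $X_1$, preserve all colimits.

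It remains to identify the $\kappa$\+presentable objects, and the substance is to show that an object $(f\:X_0\to X_1)$ is $\kappa$\+presentable in $S\Modl^\to$ if and only if both $X_0$ and $X_1$ are $\kappa$\+presentable in $S\Modl$. For the ``only if'' direction I would use adjunctions. The functor $\mathrm{ev}_0$ has a right adjoint $M\mapsto(M\to 0)$, and $\mathrm{ev}_1$ has a right adjoint $M\mapsto(M\xrightarrow{\id}M)$; since colimits are termwise, both right adjoints preserve $\kappa$\+directed colimits. By the standard principle that a left adjoint whose right adjoint preserves $\kappa$\+directed colimits carries $\kappa$\+presentable objects to $\kappa$\+presentable objects, each $\mathrm{ev}_i$ sends a $\kappa$\+presentable arrow $X$ to a $\kappa$\+presentable module; thus $X_0=\mathrm{ev}_0(X)$ and $X_1=\mathrm{ev}_1(X)$ are $\kappa$\+presentable.

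For the ``if'' direction I would argue directly. For any $Y=(g\:Y_0\to Y_1)$ the hom-set $\Hom_{S\Modl^\to}(X,Y)$ is the pullback in $\Sets$ of
$$
 \Hom_{S\Modl}(X_0,Y_0)\xrightarrow{\;g_*\;}\Hom_{S\Modl}(X_0,Y_1)\xleftarrow{\;f^*\;}\Hom_{S\Modl}(X_1,Y_1),
$$
so $\Hom_{S\Modl^\to}(X,{-})$ is a finite limit of the functors $Y\mapsto\Hom_{S\Modl}(X_i,Y_j)$. Each of these preserves $\kappa$\+directed colimits, because $X_0$ and $X_1$ are $\kappa$\+presentable and the evaluations $Y\mapsto Y_j$ preserve $\kappa$\+directed colimits; and finite limits commute with $\kappa$\+directed (filtered) colimits in $\Sets$. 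Hence $\Hom_{S\Modl^\to}(X,{-})$ preserves $\kappa$\+directed colimits, i.e.\ $X$ is $\kappa$\+presentable. The one point demanding care---the main, if modest, obstacle---is precisely this interchange of a finite limit with $\kappa$\+directed colimits, which is what forces the clean termwise description of $\kappa$\+presentable objects for the arrow category $\mathbf 2$ (and would break down for a general indexing category with at least~$\kappa$ morphisms).
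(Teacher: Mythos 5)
Your proof is correct, but it takes a genuinely different route from the paper's. The paper disposes of the whole proposition in two lines: it identifies $S\Modl^\to$ with the category of left modules over the triangular matrix ring $\left(\begin{smallmatrix}S & S \\ 0 & S\end{smallmatrix}\right)$ and invokes Proposition~\ref{modules-locally-presentable}, referring to \cite[Lemma~10.6]{Pacc} for a much more general assertion (diagram categories indexed by a small category with fewer than $\kappa$ morphisms). You mention the matrix-ring identification only parenthetically, as one way to get local $\kappa$\+presentability, and then establish the characterization of $\kappa$\+presentable objects by a direct categorical argument: the ``only if'' direction via the right adjoints $M\mapsto(M\to 0)$ and $M\mapsto(M\xrightarrow{\id}M)$ to the two evaluation functors (both adjunctions check out, and both right adjoints preserve $\kappa$\+directed colimits since colimits in $S\Modl^\to$ are termwise), and the ``if'' direction by writing $\Hom_{S\Modl^\to}(X,{-})$ as a pullback of corepresentable functors and using that finite limits commute with filtered colimits in $\Sets$. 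What your route buys: it is self-contained, whereas the paper's route leaves implicit the translation, across the equivalence with the matrix-ring module category, of ``cokernel of a morphism of free modules with fewer than $\kappa$ generators'' into ``morphism of $\kappa$\+presentable modules'' (this is exactly what the citation of \cite[Lemma~10.6]{Pacc} covers); moreover, your argument works verbatim with any locally $\kappa$\+presentable category in place of $S\Modl$ (only the description of the right adjoint to $\mathrm{ev}_0$ should use the terminal object rather than~$0$), and it isolates precisely where the finiteness of the indexing category $\mathbf 2$ enters --- your closing remark about indexing categories with at least $\kappa$ morphisms is the correct boundary of the phenomenon. What the paper's route buys is brevity and a reduction to a purely module-theoretic statement requiring no new argument.
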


\begin{proof}
 This can be viewed as a particular case of
Proposition~\ref{modules-locally-presentable} for the suitable
ring of uppertriangular $2\times2$\+matrices
$R=\left(\begin{smallmatrix}S & S \\ 0 & S\end{smallmatrix}\right)$.
 See~\cite[Lemma~10.6]{Pacc} for a much more general assertion.
\end{proof}

\begin{prop} \label{epimorphisms-of-modules}
 Let $S$ be an associative ring and $\kappa$~be a regular cardinal.
 Then the category $S\Modl^\epi$ of epimorphisms of left $S$\+modules
is locally $\kappa$\+presentable.
 The $\kappa$\+presentable objects of $S\Modl^\epi$ are precisely
all the epimorphisms of $S$\+modules $T\rarrow U$, where $T$ and $U$
are $\kappa$\+presentable left $S$\+modules.
\end{prop}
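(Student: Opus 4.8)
The plan is to realize $S\Modl^\epi$ as a full subcategory of the morphism category $S\Modl^\to$, which is locally $\kappa$\+presentable by Proposition~\ref{morphisms-of-modules}, and to show that this full subcategory is cocomplete and $\kappa$\+accessible with the expected generators. First I would observe that $S\Modl^\epi$ is closed under all colimits in $S\Modl^\to$. Indeed, colimits in $S\Modl^\to$ are computed componentwise, and a componentwise colimit of epimorphisms is again an epimorphism: the cokernel functor $S\Modl^\to\rarrow S\Modl$ preserves colimits (being left adjoint to $M\mapsto(0\rarrow M)$), so the cokernel of a colimit of epimorphisms is the colimit of their cokernels, namely zero. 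Consequently $S\Modl^\epi$ is cocomplete, the inclusion $S\Modl^\epi\rarrow S\Modl^\to$ preserves colimits, and in particular $S\Modl^\epi$ is closed under $\kappa$\+directed colimits with the inclusion preserving them.

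From this closure property two things follow. On one hand, any object of $S\Modl^\to$ that is $\kappa$\+presentable there and happens to lie in $S\Modl^\epi$ is also $\kappa$\+presentable in $S\Modl^\epi$; by Proposition~\ref{morphisms-of-modules} this applies to every epimorphism $T\rarrow U$ with $T$ and $U$ both $\kappa$\+presentable $S$\+modules, and these form (up to isomorphism) a \emph{set} $\mathcal G$ of $\kappa$\+presentable objects of $S\Modl^\epi$. On the other hand, once $\kappa$\+accessibility is established, the general principle recalled in Section~\ref{prelim-secn} identifies the $\kappa$\+presentable objects of $S\Modl^\epi$ with the retracts of objects of $\mathcal G$; since a componentwise retract of an epimorphism is an epimorphism and a retract of a $\kappa$\+presentable module is $\kappa$\+presentable, these retracts are precisely the epimorphisms between $\kappa$\+presentable $S$\+modules, as claimed.

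The heart of the matter, and the step I expect to be the main obstacle, is to show that every epimorphism $f\:T\rarrow U$ is a $\kappa$\+directed colimit of objects of $\mathcal G$. I would start from the canonical $\kappa$\+filtered diagram of all $\kappa$\+presentable objects of $S\Modl^\to$ equipped with a morphism to $(f)$, whose colimit is $(f)$ because $S\Modl^\to$ is locally $\kappa$\+presentable. Passing to the full subdiagram spanned by those objects $(g\:A\rarrow B,\ \alpha\:A\rarrow T,\ \beta\:B\rarrow U)$ for which $g$ is itself an epimorphism, it suffices to prove that this subdiagram is cofinal. Nonemptiness of the relevant comma categories is where the surjectivity of $f$ enters: given an arbitrary object $(g,\alpha,\beta)$, one enlarges the source by setting $A'=A\oplus F$, where $F$ is the free module on a set of fewer than $\kappa$ generators of $B$, defines $g'\:A'\rarrow B$ to agree with $g$ on $A$ and to send the basis of $F$ onto those generators (so $g'$ is surjective), and lifts each chosen generator of $B$ along the surjection $f$ to define $\alpha'\:A'\rarrow T$ extending $\alpha$ with $f\alpha'=\beta g'$. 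This produces a morphism in the big diagram from $(g,\alpha,\beta)$ into an epimorphism object whose source $A'$ is still $\kappa$\+presentable; connectedness of the comma categories then follows from $\kappa$\+filteredness of the ambient diagram. Hence the subdiagram of epimorphisms is cofinal, its colimit is again $(f)$, and every object of $S\Modl^\epi$ is a $\kappa$\+filtered (hence, as recalled in Section~\ref{prelim-secn}, a $\kappa$\+directed) colimit of objects of $\mathcal G$.

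Combining these ingredients, $S\Modl^\epi$ is $\kappa$\+accessible, being generated under $\kappa$\+directed colimits by the set $\mathcal G$, and it is cocomplete, hence locally $\kappa$\+presentable; and its $\kappa$\+presentable objects are exactly the epimorphisms between $\kappa$\+presentable $S$\+modules. I would emphasize that this argument is uniform in $\kappa$ and in particular covers $\kappa=\aleph_0$, so it does not invoke the inserter or equifier theorems of Section~\ref{prelim-secn} (which require $\kappa$ uncountable). An alternative, more abstract route would exploit the fact that $S\Modl^\epi$ is coreflective in $S\Modl^\to$ via the image factorization $(T\rarrow U)\mapsto(T\rarrow\im f)$, but the direct cofinality argument seems cleaner and yields the precise description of $\kappa$\+presentable objects with the least bookkeeping.
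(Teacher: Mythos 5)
Your proof is correct, but it is genuinely different from what the paper does: the paper's ``proof'' of this proposition is only a citation to \cite[Lemma~10.7]{Pacc} and \cite[Lemma~1.6]{Pres}, whereas you give a complete, self-contained argument. All the key steps check out: the cokernel functor $S\Modl^\to\rarrow S\Modl$ is indeed left adjoint to $M\mapsto(0\to M)$, so $S\Modl^\epi$ is closed under all colimits of $S\Modl^\to$ (including the initial object $0\to 0$), which gives cocompleteness and the transfer of $\kappa$-presentability from $S\Modl^\to$ to $S\Modl^\epi$; the enlargement $A'=A\oplus F$ with $F$ free on fewer than $\kappa$ generators of $B$ (available since a $\kappa$-presentable module is $<\kappa$-generated) and the lifting of generators along the surjection $f$ correctly produces, in the canonical comma-category diagram over $(f)$, a morphism from an arbitrary object into one whose arrow is surjective with $\kappa$-presentable source; and the standard fact that a full, ``weakly terminal'' subcategory of a $\kappa$-filtered category is $\kappa$-filtered and cofinal finishes the accessibility claim. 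The identification of the $\kappa$-presentable objects also works, since a componentwise retract of a surjection is a surjection. What your route buys, beyond self-containedness, is exactly what you point out: it is uniform in $\kappa$ and covers $\kappa=\aleph_0$, which matters because the proposition is stated for all regular cardinals while the inserter/equifier machinery of Section~\ref{prelim-secn} requires $\kappa$ uncountable; it also avoids any detour through those theorems, using only Proposition~\ref{morphisms-of-modules} and elementary cofinality. The cost is that the cofinality bookkeeping you carry out explicitly is precisely the content that the paper prefers to outsource to the cited lemmas.
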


\begin{proof}
 This is~\cite[Lemma~10.7]{Pacc} or~\cite[Lemma~1.6]{Pres}.
\end{proof}

 Given a regular cardinal~$\kappa$ and a ring $S$, an $S$\+module is
said to be \emph{$<\kappa$\+generated} if it admits a set of
generators of the cardinality smaller than~$\kappa$.
 In particular, $<\aleph_1$\+generated modules are said to be
\emph{countably generated}.

 A ring $S$ is said to be \emph{left countably Noetherian} if every
left ideal in $S$ is (at most) countably generated, or equivalently,
every submodule of a countably generated left $S$\+module is
countably generated.
 In the context of this section, we are interested in the countable
Noetherianity property of the ring $S=R\ot_kR^\rop$.
 As the ring $S$ is isomorphic to its opposite ring, $S\simeq S^\rop$,
the left and right countable Noetherianity conditions
are equivalent in this case.
 So we will simply say that the ring $R\ot_kR^\rop$ is (assumed to be)
countably Noetherian.

\begin{thm} \label{corings-with-flat-kernel-accessible-theorem}
 Let $k$~be a commutative ring and $R$ be an associative, unital
$k$\+algebra.
 Assume that the ring $R\ot_kR^\rop$ is countably Noetherian.
 In this setting: \par
\textup{(a)} The category $R_k\Corings_\sur$ of $R$\+corings over~$k$
with surjective counits is\/ $\aleph_1$\+accessible.
 The\/ $\aleph_1$\+presentable objects of $R_k\Corings_\sur$ are
precisely all the $R$\+corings $C$ with surjective counits whose
underlying $R$\+$R$\+bimodules $C$ are countably presentable as objects
of $R_k\Bimod{}_kR=(R\ot_kR^\rop)\Modl$, or equivalently, countably
generated as modules over $R\ot_kR^\rop$. \par
\textup{(b)} The category $R_k\Corings_\obifl$ of $R$\+corings over~$k$
with flat kernels is\/ $\aleph_1$\+accessible.
 The\/ $\aleph_1$\+presentable objects of $R_k\Corings_\obifl$ are
precisely all $R$\+corings $C$ with flat kernels whose underlying
$R$\+$R$\+bimodules $C$ are countably presentable as objects of
$R_k\Bimod{}_kR$, or equivalently, countably generated as modules
over $R\ot_kR^\rop$.
\end{thm}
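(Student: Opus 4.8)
The plan is to deduce both parts from the Pseudopullback Theorem~\ref{pseudopullback-theorem}, using Theorem~\ref{corings-locally-presentable-theorem} as a black box (exactly as in the proof of Theorem~\ref{bimodule-flat-corings-accessible-theorem}) together with the module-theoretic Propositions~\ref{modules-locally-presentable}--\ref{flat-modules-accessible}, \ref{morphisms-of-modules}, and~\ref{epimorphisms-of-modules}. Throughout, write $S=R\ot_kR^\rop$, so that $R_k\Bimod{}_kR=S\Modl$. The one place where countable Noetherianity of~$S$ is really needed is in checking that the structural functors take $\aleph_1$\+presentable objects to $\aleph_1$\+presentable objects, and I flag it at each occurrence.

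For part~(a), consider the functor $E\:R_k\Corings\rarrow S\Modl^\to$ sending a coring $C$ to its counit $\epsilon_C\:C\rarrow R$, regarded as an object of the morphism category $S\Modl^\to$; a coring morphism $f\:C\rarrow D$ is sent to the commutative square with top arrow~$f$ and bottom arrow~$\id_R$ (recall that $\epsilon_D\circ f=\epsilon_C$). Let $F_2\:S\Modl^\epi\rarrow S\Modl^\to$ be the (fully faithful, replete) inclusion of the full subcategory of epimorphisms. The pseudopullback of $E$ and $F_2$ is then equivalent to $R_k\Corings_\sur$, since an isomorphism in $S\Modl^\to$ between $\epsilon_C$ and an epimorphism forces $\epsilon_C$ to be surjective, and conversely. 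Now $E$ preserves $\aleph_1$\+directed colimits and colimits of $\aleph_0$\+indexed chains: the forgetful functor $R_k\Corings\rarrow S\Modl$ preserves colimits by Lemma~\ref{corings-cocomplete}, while the constant diagram with value~$R$ has colimit~$R$ over any directed (hence connected) index, so that $E(\varinjlim_i C_i)=(\epsilon\:\varinjlim_iC_i\rarrow R)=\varinjlim_iE(C_i)$. The inclusion $F_2$ preserves all colimits, epimorphisms being preserved by the (right exact) colimit functor, and it takes $\aleph_1$\+presentable objects to $\aleph_1$\+presentable objects by Propositions~\ref{morphisms-of-modules} and~\ref{epimorphisms-of-modules}. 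For~$E$: an $\aleph_1$\+presentable coring has countably presentable underlying $S$\+module by Theorem~\ref{corings-locally-presentable-theorem}, while the $S$\+module~$R$ is cyclic, hence countably generated, hence --- \emph{and here countable Noetherianity of~$S$ enters} --- countably presentable; thus $E(C)$ is a morphism of countably presentable $S$\+modules and so is $\aleph_1$\+presentable in $S\Modl^\to$ by Proposition~\ref{morphisms-of-modules}. Theorem~\ref{pseudopullback-theorem} now yields part~(a), the $\aleph_1$\+presentable objects being the surjective-counit corings with countably presentable underlying bimodule (the binding constraint comes from the $\sK_1$\+component, since $R$ is always countably presentable).

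For part~(b), I build on part~(a) via the kernel functor $\overline{(\,\cdot\,)}\:R_k\Corings_\sur\rarrow S\Modl$, \ $C\longmapsto\overline C=\ker(\epsilon_C)$. Because filtered colimits are exact in $S\Modl$, an $\aleph_1$\+directed colimit (or $\aleph_0$\+indexed chain) of short exact sequences $0\rarrow\overline{C_i}\rarrow C_i\rarrow R\rarrow0$ of surjective-counit corings yields $0\rarrow\varinjlim_i\overline{C_i}\rarrow\varinjlim_iC_i\rarrow R\rarrow0$, so this functor preserves $\aleph_1$\+directed colimits and colimits of $\aleph_0$\+indexed chains; these exist in $R_k\Corings_\sur$, which is closed under filtered colimits in $R_k\Corings$ as surjectivity is preserved under such colimits. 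Moreover, if $C$ is $\aleph_1$\+presentable in $R_k\Corings_\sur$ then $C$ is countably generated over~$S$, so --- \emph{again by countable Noetherianity} --- its submodule $\overline C$ is countably generated, hence countably presentable; thus $\overline{(\,\cdot\,)}$ takes $\aleph_1$\+presentable objects to $\aleph_1$\+presentable ones. Let $F_2\:S\Modl_\fl\rarrow S\Modl$ be the inclusion of flat modules, which is $\aleph_1$\+accessible with presentable objects as in Proposition~\ref{flat-modules-accessible} and preserves filtered colimits and $\aleph_1$\+presentability. The pseudopullback of $\overline{(\,\cdot\,)}$ and $F_2$ consists of triples $(C,M,\theta)$ with $C\in R_k\Corings_\sur$, \ $M$ a flat $S$\+module, and $\theta\:\overline C\simeq M$; such data is precisely a coring with surjective counit and flat kernel, so the pseudopullback is equivalent to $R_k\Corings_\obifl$. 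By part~(a) and Propositions~\ref{modules-locally-presentable}--\ref{flat-modules-accessible}, the three categories $R_k\Corings_\sur$, $S\Modl_\fl$, $S\Modl$ are $\aleph_1$\+accessible and admit colimits of $\aleph_0$\+indexed chains, so Theorem~\ref{pseudopullback-theorem} gives the $\aleph_1$\+accessibility of $R_k\Corings_\obifl$. Its $\aleph_1$\+presentable objects are the flat-kernel corings with countably presentable underlying bimodule: the $\sK_1$\+constraint demands $C$ countably presentable, and then $\overline C\simeq M$ is automatically a countably presentable flat module, in agreement with the $\sK_2$\+constraint. The equivalence of countable presentability and countable generation over~$S$ asserted in the statement is itself a restatement of countable Noetherianity.

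The main obstacle is precisely this repeated verification that the structural functors preserve $\aleph_1$\+presentability, and it is exactly where the countable Noetherianity hypothesis is indispensable: it makes $R$ countably presentable as an $S$\+module, and it makes the kernel of a countably generated coring again countably generated, hence countably presentable. A secondary subtlety worth stressing is that the kernel functor does \emph{not} preserve arbitrary colimits --- kernels being limits --- so one cannot realize $R_k\Corings_\obifl$ as an inserter or equifier in any obvious way. What rescues the argument is that Theorem~\ref{pseudopullback-theorem} only demands preservation of $\aleph_1$\+directed colimits and of colimits of $\aleph_0$\+indexed chains, and both of these are filtered, hence exact, so that the kernel functor does preserve them.
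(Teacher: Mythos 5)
Your proposal is correct and follows essentially the same route as the paper: part~(a) via the pseudopullback of the counit functor $R_k\Corings\rarrow S\Modl^\to$ against the inclusion $S\Modl^\epi\rarrow S\Modl^\to$, and part~(b) via the pseudopullback of the kernel functor $R_k\Corings_\sur\rarrow S\Modl$ against the inclusion $S\Modl_\fl\rarrow S\Modl$, with countable Noetherianity invoked at exactly the same two points (countable presentability of $R$ over $S$, and of the kernel $\overline C$). Your added remarks --- that the kernel functor only preserves the filtered-type colimits required by the pseudopullback theorem, and that directed colimits of the constant diagram $R$ are $R$ --- are correct elaborations of details the paper leaves implicit.
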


\begin{proof}
 Notice first of all that, over a countably left Noetherian ring $S$,
a left module is countably generated if and only if it is countably
presentable.
 In particular, in the situation at hand, the countable Noetherianity
condition guarantees that the $R\ot_kR^\rop$\+module $R$ is countably
presentable.
 This is important for part~(a).

 Part~(a): it is worth pointing out that, in view of
Lemma~\ref{corings-cocomplete}, the full subcategory $R_k\Corings_\sur$
is closed under colimits \emph{of nonempty diagrams} in $R_k\Corings$.
 So all colimits of nonempty diagrams exist in $R_k\Corings_\sur$;
however, the category $R_k\Corings_\sur$ has \emph{no} initial object.

 We use Theorem~\ref{corings-locally-presentable-theorem} as a black
box and apply Theorem~\ref{pseudopullback-theorem}.
 Put $\sK_1=R_k\Corings$, \ $\sK_2=(R\ot_kR^\rop)\Modl^\epi$, and
$\sL=(R\ot_kR^\rop)\Modl^\to$.
 Let $F_1\:\sK_1\rarrow\sL$ be the functor assigning to
an $R$\+coring $C$ its counit map $\epsilon\:C\rarrow R$, and let
$F_2\:\sK_2\rarrow\sL$ be the identity inclusion functor.
 Then the pseudopullback category $\sC$ of the functors $F_1$ and $F_2$
is equivalent to the desired category of corings with surjective counits
$R_k\Corings_\sur$.

 Propositions~\ref{morphisms-of-modules}\+-\ref{epimorphisms-of-modules}
and Theorem~\ref{corings-locally-presentable-theorem} tell that
the categories $\sK_1$, $\sK_2$, and $\sL$ are $\aleph_1$\+accessible,
and describe their full subcategories of $\aleph_1$\+presentable
objects.
 Theorem~\ref{pseudopullback-theorem} (for $\kappa=\aleph_1$ and
$\lambda=\aleph_0$) is applicable to the pair of functors $F_1$
and~$F_2$.
 Here one needs to know that $R$ is a countably presentable module
over $R\ot_kR^\rop$ in order to claim that the functor $F_1$ takes
$\aleph_1$\+presentable objects to $\aleph_1$\+presentable objects.
 It is clear from Lemma~\ref{corings-cocomplete} that the functor $F_1$
preserves directed colimits.
 Theorem~\ref{pseudopullback-theorem} tells that the category $\sC$ is
$\aleph_1$\+accessible, and provides the desired description of its
full subcategory of $\aleph_1$\+presentable objects.

 Part~(b): we apply Theorem~\ref{pseudopullback-theorem} again,
using part~(a) as a black box.
 Put $\sK_1=R_k\Corings_\sur$, \ $\sK_2=(R\ot_kR^\rop)\Modl_\fl$, and
$\sL=(R\ot_kR^\rop)\Modl$.
 Let $F_1\:\sK_1\rarrow\sL$ be the functor assigning the kernel
$\overline C$ of the counit map $\epsilon\:C\rarrow R$ to an $R$\+coring
$C$ with surjective counit.
 Let $F_2\:\sK_2\rarrow\sL$ be the identity inclusion functor.
 Then the pseudopullback category $\sC$ of the functors $F_1$ and $F_2$
is equivalent to the desired category of corings with flat kernels
$R_k\Corings_\obifl$.

 Propositions~\ref{modules-locally-presentable}\+-%
\ref{flat-modules-accessible} and part~(a) of the present theorem 
tell that the categories $\sK_1$, $\sK_2$, and $\sL$ are
$\aleph_1$\+accessible, and describe their full subcategories of
$\aleph_1$\+presentable objects.
 Once again we claim that Theorem~\ref{pseudopullback-theorem} (for
$\kappa=\aleph_1$ and $\lambda=\aleph_0$) is applicable to the pair
of functors $F_1$ and~$F_2$.
 Here we need to know that that the kernel $\overline C$ of
the counit map $\epsilon\:C\rarrow R$ is countably presentable as
a module over $R\ot_kR^\rop$ whenever so is the $R$\+$R$\+bimodule~$C$.
 This is where the countable Noetherianity condition is used again.
 It is also important that the functor $F_1$ preserves directed
colimits.
 Theorem~\ref{pseudopullback-theorem} tells that the category $\sC$ is
$\aleph_1$\+accessible, and provides the desired description of its
full subcategory of $\aleph_1$\+presentable objects.
\end{proof}

\Section{Module-Flat Corings}

 We continue to consider $R$\+corings $C$ over~$k$, where $k$ is
a commutative ring and $R$ is an associative, unital $k$\+algebra.
 We say that a coring $C$ is \emph{right $R$\+module flat} (or
``right flat'' for brevity) if $C$ is a flat right $R$\+module.
 The importance of this condition is explained by the results
of~\cite[Sections~18.6, 18.14, and~18.16]{BW},
\cite[Proposition~2.12(a)]{Prev}, and~\cite[Lemma~2.1]{Pflcc},
telling that the category of left $C$\+comodules $C\Comodl$ is
abelian \emph{and} the forgetful functor $C\Comodl\rarrow R\Modl$
is exact if and only if $C$ is a flat right $R$\+module.

 Furthermore, we say that a coring $C$ is \emph{left and right
$R$\+module flat} (or just ``left and right flat'') if $C$ is
a flat left $R$\+module \emph{and} a flat right $R$\+module.
 This condition should not be confused with the condition of flatness
of $C$ as a module over $R\ot_kR^\rop$ (which was discussed in
Section~\ref{bimodule-flat-secn}).
 When $R$ is a flat $k$\+module, one can say that any
$R$\+$R$\+bimodule flat $R$\+coring is left and right $R$\+module
flat, but \emph{not} vice versa.
 We denote the full subcategory of right $R$\+module flat $R$\+corings
over~$k$ by $R_k\Corings_\rfl\subset R_k\Corings$ and the full
subcategory of left and right $R$\+module flat $R$\+corings by
$R_k\Corings_\lrfl\subset R_k\Corings_\rfl\subset R_k\Corings$.

 We will also need terminology and notation for the related categories
of $R$\+$R$\+bimod\-ules over~$k$ with $R$\+module flatness conditions.
 So let $R_k\Bimodrfl{}_kR\subset R_k\Bimod{}_kR$ denote
the full subcategory of $R$\+$R$\+bimodules that are flat as
right $R$\+modules, and let $R_k\Bimodlrfl{}_kR\subset
R_k\Bimodrfl{}_kR\subset R_k\Bimod{}_kR$ denote the full
subcategory of $R$\+$R$\+bimodules that are flat both as left
$R$\+modules and as right $R$\+modules.
 The objects of $R_k\Bimodrfl{}_kR$ will be called \emph{right
$R$\+module flat $R$\+$R$\+bimodules} (or simply ``right flat
bimodules''), while the objects of $R_k\Bimodlrfl{}_kR$ will be
called \emph{left and right $R$\+module flat $R$\+$R$\+bimodules}
(or simply ``left and right flat bimodules'').

 Finally, let us denote by $\Modr R$ the category of right $R$\+modules,
and by $\Modrfl R\subset\Modr R$ the full subcategory of flat right
$R$\+modules.

 For any set $X$, we denote by $|X|$ the cardinality of~$X$.
 Notice that, for any ring $S$ and any regular cardinal
$\kappa>|S|$, an $S$\+module $M$ is $\kappa$\+presentable if and only
if $M$ is $<\kappa$\+generated, and if and only if $|M|<\kappa$.

\begin{prop} \label{one-sided-flat-bimodules}
 Let $k$~be a commutative ring, $R$ be an associative, unital
$k$\+algebra, and $\kappa>|R|+\aleph_0$ be a regular cardinal.
 In this setting: \par
\textup{(a)} The category $R_k\Bimodrfl{}_kR$ of right $R$\+module
flat $R$\+$R$\+bimodules over~$k$ is $\kappa$\+accessible.
 The $\kappa$\+presentable objects of $R_k\Bimodrfl{}_kR$ are
precisely all the right flat bimodules of the cardinality smaller
than~$\kappa$. \par
\textup{(b)} The category $R_k\Bimodlrfl{}_kR$ of left and right
$R$\+module flat $R$\+$R$\+bimodules over~$k$ is $\kappa$\+accessible.
 The $\kappa$\+presentable objects of $R_k\Bimodlrfl{}_kR$ are
precisely all the left and right flat bimodules of the cardinality
smaller than~$\kappa$.
\end{prop}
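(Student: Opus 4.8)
The plan is to present each of the two categories as a pseudopullback and to invoke Theorem~\ref{pseudopullback-theorem}; the only step carrying any real content is checking that the forgetful functors involved preserve $\kappa$\+presentable objects, and this is exactly where the hypothesis $\kappa>|R|+\aleph_0$ is needed. The underlying observation is a cardinality count. Writing $S=R\ot_kR^\rop$, one has $|S|\le|R|+\aleph_0<\kappa$ (every element of $S$ is a finite sum of elementary tensors), so $\kappa$ exceeds the cardinality of each of the three rings $R$, $R^\rop$, and~$S$. By the criterion recalled immediately before the proposition, a module over any one of these rings is $\kappa$\+presentable if and only if its underlying set has cardinality~$<\kappa$. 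Consequently the forgetful functor $S\Modl=R_k\Bimod{}_kR\rarrow\Modr R$ (restriction of scalars along the inclusion $R^\rop\hookrightarrow S$) carries a $\kappa$\+presentable bimodule to a right $R$\+module of the same, hence $<\kappa$, cardinality, and therefore to a $\kappa$\+presentable right $R$\+module; the same holds for the forgetful functor $S\Modl\rarrow R\Modl$ to left $R$\+modules.

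For part~(a) I would set $\sK_1=R_k\Bimod{}_kR$, \ $\sK_2=\Modrfl R$, and $\sL=\Modr R$, with $F_1$ the forgetful functor and $F_2$ the inclusion; the pseudopullback of $F_1$ and $F_2$ is visibly equivalent to $R_k\Bimodrfl{}_kR$. By Propositions~\ref{modules-locally-presentable} and~\ref{flat-modules-accessible} all three categories are $\kappa$\+accessible, and they admit colimits of $\lambda$\+indexed chains for $\lambda=\aleph_0$ (a countable, indeed any directed, colimit of flat right modules is flat). The functor $F_1$ preserves all colimits, while $F_2$ preserves directed colimits because flat modules are closed under them; both send $\kappa$\+presentable objects to $\kappa$\+presentable objects by the count above (for $F_2$ this is built into Proposition~\ref{flat-modules-accessible}). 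Thus Theorem~\ref{pseudopullback-theorem} applies and gives $\kappa$\+accessibility of $R_k\Bimodrfl{}_kR$; its description of the $\kappa$\+presentable objects as triples $(S_1,S_2,\theta)$ with $S_1\in(\sK_1)_{<\kappa}$ and $S_2\in(\sK_2)_{<\kappa}$ specializes, via the cardinality criterion, to precisely the right flat bimodules of cardinality~$<\kappa$.

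For part~(b) I would feed part~(a) back into the same machine, taking $\sK_1=R_k\Bimodrfl{}_kR$, \ $\sK_2=R\Modl_\fl$, and $\sL=R\Modl$, with $F_1$ the forgetful functor to left $R$\+modules and $F_2$ the inclusion; the resulting pseudopullback is the category of right flat bimodules whose underlying left module is flat, namely $R_k\Bimodlrfl{}_kR$. The hypotheses are verified word for word as before, using that $R_k\Bimodrfl{}_kR$ is closed under countable chain colimits in $R_k\Bimod{}_kR$ and that $F_1$ again preserves $\kappa$\+presentability by the cardinality count. (One could equally obtain~(b) in a single step with $\sK_2=R\Modl_\fl\times\Modrfl R$ and $\sL=R\Modl\times\Modr R$, legitimate by Proposition~\ref{product-proposition}.) As indicated, the sole nontrivial point throughout is the preservation of $\kappa$\+presentable objects by the forgetful functors: this is what fails for small regular~$\kappa$ and forces the bound $\kappa>|R|+\aleph_0$, explaining why the estimate here is no sharper than the one from the purification approach. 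Everything else is a routine check of the pseudopullback hypotheses.
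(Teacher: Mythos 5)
Your proposal is correct and follows essentially the same route as the paper: part~(a) is identical (the same pseudopullback $\sK_1=R_k\Bimod{}_kR$, $\sK_2=\Modrfl R$, $\sL=\Modr R$, with the bound $\kappa>|R|+\aleph_0$ used exactly where the paper uses it, to ensure $F_1$ preserves $\kappa$\+presentability via the cardinality criterion). The only divergence is in part~(b), where you iterate the pseudopullback construction by feeding part~(a) back in with $\sK_1=R_k\Bimodrfl{}_kR$, $\sK_2=R\Modl_\fl$, $\sL=R\Modl$, whereas the paper does a single pseudopullback from $R_k\Bimod{}_kR$ against the product $\sK_2=R\Modl_\fl\times\Modrfl R$ over $\sL=R\Modl\times\Modr R$ using Proposition~\ref{product-proposition}; both decompositions verify the same hypotheses and yield the same description of $\kappa$\+presentable objects, and you note the paper's one-step variant yourself.
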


\begin{proof}
 Part~(a): apply Theorem~\ref{pseudopullback-theorem} to the following
pair of functors $F_1$ and~$F_2$.
 Put $\sK_1=R_k\Bimod{}_kR=(R\ot_kR^\rop)\Modl$, \
$\sK_2=\Modrfl R$, and $\sL=\Modr R$.
 Let $F_1\:\sK_1\rarrow\sL$ be the forgetful functor assigning to
an $R$\+$R$\+bimodule its underlying right $R$\+module, and let
$F_2\:\sK_2\rarrow\sL$ be the identity inclusion functor.
 Then the pseudopullback category $\sC$ of the functors $F_1$ and
$F_2$ is equivalent to the desired category of right $R$\+module flat
$R$\+$R$\+bimodules $R_k\Bimodrfl{}_kR$.

 Propositions~\ref{modules-locally-presentable}\+-%
\ref{flat-modules-accessible} tell that the categories $\sK_1$, $\sK_2$,
and $\sL$ are $\kappa$\+accessible, and describe their full
subcategories of $\kappa$\+presentable objects.
 We claim that Theorem~\ref{pseudopullback-theorem} (for the given
cardinal~$\kappa$ and $\lambda=\aleph_0$) is applicable to the pair
of functors $F_1$ and~$F_2$.
 The condition that $\kappa>|R|$ is needed here for the functor $F_1$
to take $\kappa$\+presentable objects to $\kappa$\+presentable objects.
 Theorem~\ref{pseudopullback-theorem} tells that the category $\sC$ is
$\kappa$\+accessible, and provides the desired description of its
full subcategory of $\kappa$\+presentable objects.

 Part~(b): once again, we apply Theorem~\ref{pseudopullback-theorem}.
 Put $\sK_1=R_k\Bimod{}_kR=(R\ot_k\nobreak R^\rop)\Modl$, \
$\sK_2=R\Modl_\fl\times\Modrfl R$, and $\sL=R\Modl\times\Modr R$.
 Let $F_1\:\sK_1\rarrow\sL$ be the forgetful functor assigning to
an $R$\+$R$\+bimodule $B$ the pair of its underlying left and right
$R$\+modules $(B,B)\in R\Modl\times\Modr R$, and let $F_2\:\sK_2
\rarrow\sL$ be the Cartesian product of the identity inclusion functors
$R\Modl_\fl\rarrow R\Modl$ and $\Modrfl R\rarrow\Modr R$.

 Propositions~\ref{modules-locally-presentable}\+-%
\ref{flat-modules-accessible} together with
Proposition~\ref{product-proposition} tell that the categories
$\sK_1$, $\sK_2$, and $\sL$ are $\kappa$\+accessible, and describe
their full subcategories of $\kappa$\+presentable objects.
 Once again, the condition that $\kappa>|R|$ implies that the functor
$F_1$ takes $\kappa$\+presentable objects to $\kappa$\+presentable
objects.
 So Theorem~\ref{pseudopullback-theorem} is applicable for
the given cardinal~$\kappa$ and $\lambda=\aleph_0$, providing
the desired conclusions.
\end{proof}

 The following proposition is a straightforward generalization of
Theorem~\ref{corings-locally-presentable-theorem}.

\begin{prop} \label{corings-kappa-locally-presentable}
 Let $k$~be a commutative ring, $R$ be an associative, unital
$k$\+algebra, and $\kappa$~be an uncountable regular cardinal.
 Then the category $R_k\Corings$ of coassociative, counital
$R$\+corings over~$k$ is locally $\kappa$\+presentable.
 The $\kappa$\+presentable objects of $R_k\Corings$ are precisely
all the $R$\+corings whose underlying $R$\+$R$\+bimodules are
$\kappa$\+presentable as objects of
$R_k\Bimod{}_kR=(R\ot_kR^\rop)\Modl$.
\end{prop}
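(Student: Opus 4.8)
The plan is to recognize this as the verbatim analogue of Theorem~\ref{corings-locally-presentable-theorem} with $\aleph_1$ replaced by an arbitrary uncountable regular cardinal~$\kappa$, and to deduce it as a particular case of the general comonoid result Theorem~\ref{comonoids-accessible-theorem}(a). I would apply that theorem to the monoidal category $\sM=R_k\Bimod{}_kR=(R\ot_kR^\rop)\Modl$ equipped with the tensor product $\ot=\ot_R$, taking the auxiliary cardinal to be $\lambda=\aleph_0$. This choice is legitimate precisely because $\kappa$ is uncountable, so that $\aleph_0<\kappa$ is an infinite cardinal strictly below~$\kappa$; this is the one and only place where the uncountability hypothesis on~$\kappa$ is used.

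Next I would verify the hypotheses of Theorem~\ref{comonoids-accessible-theorem}(a). By Proposition~\ref{modules-locally-presentable} applied to the ring $R\ot_kR^\rop$, the underlying category of~$\sM$ is locally $\kappa$\+presentable, hence in particular $\kappa$\+accessible and cocomplete; so colimits of $\aleph_0$\+indexed chains certainly exist in it. The tensor product functor~$\ot_R$ preserves colimits in each of its two arguments, since each of the functors $(-)\ot_RC$ and $C\ot_R(-)$ is a left adjoint; in particular $\ot_R$ preserves $\kappa$\+directed colimits in both arguments. Thus all the assumptions of Theorem~\ref{comonoids-accessible-theorem}(a) are satisfied for the pair of cardinals $\lambda=\aleph_0<\kappa$.

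Applying that theorem, I would conclude that the category of comonoid objects $\sM\Comon=R_k\Corings$ is $\kappa$\+accessible, and that an $R$\+coring is a $\kappa$\+presentable object of $R_k\Corings$ if and only if its underlying $R$\+$R$\+bimodule is $\kappa$\+presentable as an object of $\sM=(R\ot_kR^\rop)\Modl$, which is exactly the asserted description. Finally, to upgrade $\kappa$\+accessibility to local $\kappa$\+presentability, I would invoke Lemma~\ref{corings-cocomplete}, which guarantees that all colimits exist in $R_k\Corings$; a $\kappa$\+accessible category in which all colimits exist is locally $\kappa$\+presentable.

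I do not expect any genuine obstacle here, since the inserter/equifier machinery underlying Theorem~\ref{comonoids-accessible-theorem} is already formulated for an arbitrary regular~$\kappa$ together with a smaller infinite cardinal~$\lambda$. The only point deserving attention is the one flagged above: the argument requires an infinite $\lambda<\kappa$, which forces $\kappa>\aleph_0$, and this is precisely the uncountability assumption in the statement. As an alternative to quoting Theorem~\ref{comonoids-accessible-theorem}(a), one could simply repeat the explicit two\+step inserter\+then\+equifier argument from the proof of Theorem~\ref{corings-locally-presentable-theorem}, replacing $\aleph_1$ by~$\kappa$ and keeping $\lambda=\aleph_0$ throughout, noting as there that the diagonal functor~$F$ preserves all colimits and takes $\kappa$\+presentable objects to $\kappa$\+presentable objects (by Proposition~\ref{product-proposition}), while the functor~$G$ involving the tensor\+square is only required to preserve $\kappa$\+directed colimits, which it does by the cofinality\+of\+the\+diagonal computation $\varinjlim_{(\alpha,\beta)}T_\alpha\ot_RT_\beta=\varinjlim_\xi T_\xi\ot_RT_\xi$.
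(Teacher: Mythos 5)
Your proof is correct and in substance coincides with the paper's: the paper's own proof of this proposition just repeats the inserter/equifier argument of Theorem~\ref{corings-locally-presentable-theorem} with the given $\kappa$ and $\lambda=\aleph_0$ (plus Lemma~\ref{corings-cocomplete} for cocompleteness), which is exactly the alternative you sketch in your last paragraph. Your primary route, citing Theorem~\ref{comonoids-accessible-theorem}(a) for $\sM=(R\ot_kR^\rop)\Modl$ with $\ot=\ot_R$, is the same machinery in packaged form --- an equivalence the paper itself points out at the end of the proof of Theorem~\ref{corings-locally-presentable-theorem} --- and you correctly identify that uncountability of $\kappa$ is needed only to have an infinite $\lambda<\kappa$.
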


\begin{proof}
 The same as the proof of
Theorem~\ref{corings-locally-presentable-theorem}, except that
one needs to apply Theorems~\ref{inserter-theorem}
and~\ref{equifier-theorem} for the given cardinal~$\kappa$
and $\lambda=\aleph_0$.
\end{proof}

\begin{thm} \label{module-flat-corings-kappa-accessible}
 Let $k$~be a commutative ring, $R$ be an associative, unital
$k$\+algebra, and $\kappa>|R|+\aleph_0$ be a regular cardinal.
 In this setting: \par
\textup{(a)} The category $R_k\Corings_\rfl$ of right $R$\+module
flat $R$\+corings over~$k$ is $\kappa$\+ac\-ces\-si\-ble.
 The $\kappa$\+presentable objects of $R_k\Corings_\rfl$ are
precisely all the right flat $R$\+corings of the cardinality smaller
than~$\kappa$. \par
\textup{(b)} The category $R_k\Corings_\lrfl$ of left and right
$R$\+module flat $R$\+corings over~$k$ is $\kappa$\+accessible.
 The $\kappa$\+presentable objects of $R_k\Corings_\lrfl$ are
precisely all the left and right flat $R$\+corings of the cardinality
smaller than~$\kappa$.
\end{thm}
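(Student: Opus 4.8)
The plan is to present each of the two categories as a pseudopullback and apply Theorem~\ref{pseudopullback-theorem}, exactly in the spirit of the proofs of Theorems~\ref{bimodule-flat-corings-accessible-theorem} and~\ref{corings-with-flat-kernel-accessible-theorem}. Both ingredients are already available: Proposition~\ref{corings-kappa-locally-presentable} shows the full category $R_k\Corings$ is locally $\kappa$\+presentable with the expected description of its $\kappa$\+presentable objects, while Proposition~\ref{one-sided-flat-bimodules} does the same for the one\+sided flat bimodule categories. For part~(a) I would therefore put $\sK_1=R_k\Corings$, \ $\sK_2=R_k\Bimodrfl{}_kR$, and $\sL=R_k\Bimod{}_kR=(R\ot_kR^\rop)\Modl$, with $F_1\:\sK_1\rarrow\sL$ the forgetful functor sending a coring to its underlying bimodule and $F_2\:\sK_2\rarrow\sL$ the identity inclusion of right flat bimodules. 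An object of the pseudopullback $\sC$ is then a coring $C$, a right flat bimodule $B$, and an isomorphism of their underlying bimodules, so $\sC$ is equivalent to $R_k\Corings_\rfl$. Part~(b) is identical except that $\sK_2$ is replaced by $R_k\Bimodlrfl{}_kR$, which is $\kappa$\+accessible by Proposition~\ref{one-sided-flat-bimodules}(b), and the pseudopullback becomes $R_k\Corings_\lrfl$.

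Next I would check the hypotheses of Theorem~\ref{pseudopullback-theorem} for the given $\kappa$ and $\lambda=\aleph_0$. All three categories are $\kappa$\+accessible and admit colimits of $\aleph_0$\+indexed chains: this follows from Propositions~\ref{modules-locally-presentable}, \ref{corings-kappa-locally-presentable}, and~\ref{one-sided-flat-bimodules}, using that flat modules are closed under directed colimits inside the ambient module category. The inclusion $F_2$ preserves $\kappa$\+directed colimits and colimits of chains for that same closure reason, and the forgetful functor $F_1$ preserves all colimits by Lemma~\ref{corings-cocomplete}. That $F_1$ takes $\kappa$\+presentable objects to $\kappa$\+presentable objects is precisely the content of Proposition~\ref{corings-kappa-locally-presentable}; that $F_2$ does likewise holds because a right flat bimodule of cardinality~$<\kappa$ is $<\kappa$\+generated, hence $\kappa$\+presentable over $R\ot_kR^\rop$. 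Theorem~\ref{pseudopullback-theorem} then yields the $\kappa$\+accessibility of $\sC$ and describes its $\kappa$\+presentable objects as the triples $(S_1,S_2,\theta)$ with $S_1\in(\sK_1)_{<\kappa}$ and $S_2\in(\sK_2)_{<\kappa}$.

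The one point needing care --- and the only place where the hypothesis $\kappa>|R|+\aleph_0$ is used, as opposed to merely requiring $\kappa$ uncountable --- is turning this abstract conclusion into the stated cardinality description. I would first record that $|R\ot_kR^\rop|\le|R|+\aleph_0<\kappa$, since the tensor product is a quotient of the free abelian group on $R\times R$. By the observation preceding Proposition~\ref{one-sided-flat-bimodules}, over the ring $S=R\ot_kR^\rop$ with $\kappa>|S|$ a module is $\kappa$\+presentable iff it is $<\kappa$\+generated iff it has cardinality~$<\kappa$. Feeding this into the descriptions of $(\sK_1)_{<\kappa}$ and $(\sK_2)_{<\kappa}$ from Propositions~\ref{corings-kappa-locally-presentable} and~\ref{one-sided-flat-bimodules}, the $\kappa$\+presentable objects of $\sC\simeq R_k\Corings_\rfl$ come out as exactly the right flat corings of cardinality~$<\kappa$, and symmetrically in part~(b). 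I do not anticipate a genuine obstacle here: the proof is a routine pseudopullback assembly, and the sole subtlety is this cardinality bookkeeping, which confirms that the flatness condition on the bimodule factor and the $\kappa$\+presentability condition on the coring factor cut out mutually compatible constraints.
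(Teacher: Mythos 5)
Your proposal is correct and follows essentially the same route as the paper's own proof: the same pseudopullback of $F_1\:R_k\Corings\rarrow R_k\Bimod{}_kR$ against the inclusion of $R_k\Bimodrfl{}_kR$ (resp.\ $R_k\Bimodlrfl{}_kR$), with hypotheses supplied by Propositions~\ref{modules-locally-presentable}, \ref{one-sided-flat-bimodules}, and~\ref{corings-kappa-locally-presentable}. Your explicit cardinality bookkeeping via $|R\ot_kR^\rop|\le|R|+\aleph_0<\kappa$ is exactly the point the paper leaves implicit in the remark preceding Proposition~\ref{one-sided-flat-bimodules}, so nothing is missing.
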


\begin{proof}
 This is similar to
Theorem~\ref{bimodule-flat-corings-accessible-theorem}.
 We sketch the argument based on Theorem~\ref{pseudopullback-theorem}.

 Part~(a): put $\sK_1=R_k\Corings$, \ $\sK_2=R_k\Bimodrfl{}_kR$,
and $\sL=R_k\Bimod{}_kR=(R\ot_k\nobreak R^\rop)\Modl$.
 Let $F_1\:\sK_1\rarrow\sL$ be the forgetful functor assigning to every
$R$\+coring $C$ over~$k$ its underlying $R$\+$R$\+bimodule $C$, and
let $F_2\:\sK_2\rarrow\sL$ be the identity inclusion functor.
 Then the pseudopullback category $\sC$ of the functors $F_1$ and
$F_2$ is equivalent to the desired category of right flat corings
$R_k\Corings_\rfl$.

 Propositions~\ref{modules-locally-presentable},
\ref{one-sided-flat-bimodules}(a),
and~\ref{corings-kappa-locally-presentable} tell that
the categories $\sK_1$, $\sK_2$, and $\sL$ are $\kappa$\+accessible,
and describe their full subcategories of $\kappa$\+presentable objects.
 Theorem~\ref{pseudopullback-theorem} (for the given cardinal~$\kappa$
and $\lambda=\aleph_0$) is applicable to the pair of functors $F_1$
and~$F_2$.
 Theorem~\ref{pseudopullback-theorem} tells that the category $\sC$ is
$\kappa$\+accessible, and provides the desired description of its
full subcategory of $\kappa$\+presentable objects.

 The proof of part~(b) is similar, except that one needs to take
$\sK_2=R_k\Bimodlrfl{}_kR$ and use
Proposition~\ref{one-sided-flat-bimodules}(b).
 Alternatively, one can apply
Theorem~\ref{comonoids-accessible-theorem}(a)
to the monoidal category $\sM=R_k\Bimodrfl{}_kR$ (for part~(a)) or
$\sM=R_k\Bimodlrfl{}_kR$ (for part~(b)) with respect to the operation
of tensor product $\ot=\ot_R$.
\end{proof}

\begin{rem}
 In the context of this section, the purification-based approach in
the spirit of~\cite[Theorem~3.1]{Bar} actually gives a better result
than our Theorem~\ref{module-flat-corings-kappa-accessible},
in that it represents a coring $C$ with a flatness condition as
a $\kappa$\+directed \emph{union} of its $R$\+$R$\+bimodule pure
\emph{subcorings} $C'$ satisfying the same flatness condition and
having cardinalities smaller than~$\kappa$.
 Similarly, the purification-based approach gives a better result
than Proposition~\ref{one-sided-flat-bimodules}, in that it represents
a bimodule $B$ with a flatness condition as a $\kappa$\+directed union
of its pure \emph{subbimodules} $B'$ satisfying the same flatness
condition and having cardinalities smaller than~$\kappa$.
 Notice that, for any ring homomorphism $R\rarrow S$ and any
$S$\+module $M$, any pure $S$\+submodule of $M$ is also pure as
an $R$\+submodule of $M$; in particular, any pure submodule over
$R\ot_kR^\rop$ is a pure left and right $R$\+submodule as well.
 So this last section is included in this paper only for illustrative
and comparison purposes, as well as to point out an apparent
limitation of our methods.
\end{rem}

\bigskip

\end{document}